\numberwithin{equation}{section}
\newtheorem{theorem}{Theorem}[section]
\newtheorem{corollary}[theorem]{Corollary}
\newtheorem{lemma}[theorem]{Lemma}
\newtheorem{proposition}[theorem]{Proposition}
\newtheorem{remark}[theorem]{Remark}
\newtheorem{example}[theorem]{Example}
\newtheorem{definition}[theorem]{Definition}
\newproof{proof}{Proof}
\journal{Algebra Colloquium}
\begin{document}

\begin{frontmatter}

\title{\textbf{A new class of partial orders}}

\author[1]{Huihui Zhu\corref{cor}}
\ead{hhzhu@hfut.edu.cn}
\cortext[cor]{Corresponding author}

\author[1]{Liyun Wu}
\ead{wlymath@163.com}

\address[1]{School of Mathematics, Hefei University of Technology, Hefei 230009, China.}

\begin{abstract} Let $R$ be a unital $*$-ring. For any $a,w,b\in R$, we apply the defined $w$-core inverse to define a new class of partial orders in $R$, called the $w$-core partial order. Suppose $a,b\in R$ are $w$-core invertible. We say that $a$ is below $b$ under the $w$-core partial order, denoted by $a\overset{\tiny{\textcircled{\#}}}\leq_w b$, if $a_w^{\tiny{\textcircled{\#}}} a=a_w^{\tiny{\textcircled{\#}}} b$ and $awa_w^{\tiny{\textcircled{\#}}} =bwa_w^{\tiny{\textcircled{\#}}}$, where $a_w^{\tiny{\textcircled{\#}}}$ denotes the $w$-core inverse of $a$. Characterizations of the $w$-core partial order are given. Also, the relationships with several types of partial orders are considered. In particular, we show that the core partial order coincides with the $a$-core partial order, and the star partial order coincides with the $a^*$-core partial order.
\end{abstract}

\begin{keyword} the $w$-core inverse \sep the inverse along an element \sep the sharp partial order \sep the star partial order \sep the core partial order \sep rings with involution

\MSC[2010] 06A06 \sep 15A09 \sep 16W10

\end{keyword}

\end{frontmatter}



\section{Introduction}

There are many types of partial orders based on generalized inverses in mathematical literature, such as the minus partial order \cite{Hartwig1980}, the plus partial order \cite{Hartwig1980}, the sharp partial order \cite{Mitra1987}, the star partial order \cite{Drazin1978}, the diamond partial order \cite{Lebtahi2014}, the core partial order \cite{Baksalary2010} and so on. These partial orders are investigated in different settings such as complex matrices and rings.

In this paper, we aim to introduce the $w$-core partial order based on our defined $w$-core inverses \cite{Zhu2020} and to give its several characterizations and properties. The paper is organized as follows. In Section 2, we define a class of partial order and establish its characterizations. In Section 3, the relationships between the $w$-core partial order and other partial orders are considered. It is proved that the $w$-core partial order is between the core partial order and the diamond partial order. We show in Theorem \ref{three class partial orders} that the star partial order and the core partial order are both instances of the $w$-core partial order. More precisely, the core partial order coincides with the $a$-core partial order, and the star partial order coincides with the $a^*$-core partial order. Then, the equivalence between $a\overset{\tiny{\textcircled{\#}}}\leq_w b$ and $b-a\overset{\tiny{\textcircled{\#}}}\leq_w b$ is derived, under certain conditions. In the end, the reverse order law for the $w$-core inverse is given.

Let us now recall several notions of generalized inverses in rings. Let $R$ be an associative ring with the identity 1. An element $a\in R$ is called (von Neumann) regular if there is some $x\in R$ such that $a=axa$. Such an $x$ is called an inner inverse of $a$, and is denoted by $a^{-}$.  If in addition, $xax=x$, then $a$ is called reflexive or $\{1,2\}$-invertible. Such an $x$ is called a reflexive inverse of $a$, and is denoted by $a^+$. Further, an element $a\in R$ is called group invertible if there exists a reflexive inverse $a^+$ of $a$ which commutes with $a$. Such an element $a^+$ is called a group inverse of $a$. It is unique if it exists, and is denoted by $a^\#$. We denote by $R^\#$ the set of all group invertible elements in $R$.

Given any $a,d\in R$, $a$ is invertible along $d$ \cite{Mary2011} if there exists some $b\in R$ such that $bad=d=dab$ and $b\in dR \cap Rd$. Such an element $b$ is called the inverse of $a$ along $d$. It is unique if it exists, and is denoted by $a^{\parallel d}$. As usual, by the symbol $R^{\parallel d}$ we denote the set of all invertible elements along $d$. It is known from \cite[Theorem 2.2]{Mary2013} that $a\in R^{\parallel d}$ if and only if $d\in dadR \cap Rdad$. Mary in \cite[Theorem 11]{Mary2011} proved that $a\in R^\#$ if and only if $a\in R^{\parallel a}$ if and only if $1\in R^{\parallel a}$. Moreover, $a^\#=a^{\parallel a}$ and $1^{\parallel a}=aa^\#$. More results on the inverse along an element can be referred to \cite{Mary2013,Zhu2016,Zhu20171}.

Throughout this paper, we assume that $R$ is a unital $*$-ring, that is a ring with unity 1 and an involution $*$ satisfying $(x^*)^*=x$, $(xy)^*=y^*x^*$ and $(x+y)^*=x^*+y^*$ for all $x,y\in R$. An element $a\in R$ (with involution) is Moore-Penrose invertible \cite{Penrose1955} if there is some $x\in R$ such that
\begin{center}
(i) $axa=a$, (ii) $xax=x$, (iii) $(ax)^*=ax$, (iv) $(xa)^*=xa$.
\end{center}
Such an $x$ is called a Moore-Penrose inverse of $a$. It is unique if it exists, and is denoted by $a^\dag$. We denote by $R^\dag$ the set of all Moore-Penrose invertible elements in $R$. It was proved in \cite{Mary2011,Zhu20171} that $a\in R^\dag$ if and only if $a^{\parallel a^*}$ exists if and only if $(a^*)^{\parallel a}$ exists. In this case, $a^\dag=a^{\parallel a^*}=((a^*)^{\parallel a})^*$.

If $a$ and $x$ satisfy the equations (i) $axa=a$ and  (iii) $(ax)^*=ax$, then $x$ is called a $\{1,3\}$-inverse of $a$, and is denoted by $a^{(1,3)}$. If $a$ and $x$ satisfy the equations (i) $axa=a$ and  (iv) $(xa)^*=xa$, then $x$ is called a $\{1,4\}$-inverse of $a$, and is denoted by $a^{(1,4)}$. We denote by $R^{(1,3)}$ and $R^{(1,4)}$ the sets of all $\{1,3\}$-invertible and $\{1,4\}$-invertible elements in $R$, respectively. It is well known that $a\in R^\dag$ if and only if $a\in R^{(1,3)} \cap R^{(1,4)}$ if and only if $a\in aa^*R \cap Ra^*a$ if and only if $a\in aa^*aR $ if and only if $a\in Raa^*a$. In this case, $a^\dag=a^{(1,4)}aa^{(1,3)}$.

The core inverse of complex matrices was firstly introduced by Baksalary and Trenkler  \cite{Baksalary2010}. In 2014, Raki\'{c} et al. \cite{Rakic2014} extended the core inverse of a complex matrix to an element in a unital $*$-ring. They showed that the core inverse of $a\in R$ is the solution of the following five equations
\begin{center}
 (1)~$axa=a$, (2)~$xax=x$, (3)~$ax^2=x$, (4)~$xa^2=a$, (5)~$(ax)^*=ax$.
\end{center}
The core inverse of $a\in R$ is unique if it exists, and is denoted by $a^{\tiny{\textcircled{\#}}}$. By $R^{\tiny{\textcircled{\#}}}$ we denote the set of all core invertible elements in $R$. In \cite[Theorem 2.6]{Xu2017}, Xu et al. showed that $a\in R^{\tiny{\textcircled{\#}}}$ if and only if $a\in R^\# \cap R^{(1,3)}$. In this case, the expression of the core inverse can be given as $a^{\tiny{\textcircled{\#}}}=a^\#aa^{(1,3)}$.

Recently, the present authors \cite{Zhu2020} defined the $w$-core inverse in a ring $R$. Given any $a,w\in R$, we say that $a$ is $w$-core invertible if there exists some $x\in R$ such that $awx^2=x$, $xawa=a$ and $(awx)^*=awx$. Such an $x$ is called a $w$-core inverse of $a$. It is unique if it exists, and is denoted by $a_w^{\tiny{\textcircled{\#}}}$.  Also, the $w$-core inverse $x$ of $a$ satisfies $awxa=a$ and $xawx=x$ (see \cite[Lemma 2.2]{Zhu2020}). By $R_w^{\tiny{\textcircled{\#}}}$ we denote the set of all $w$-core invertible elements in $R$. It was proved in \cite[Theorem 2.11]{Zhu2020} that $a \in R_w^{\tiny{\textcircled{\#}}}$ if and only if $w\in R^{\parallel a}$ and $a\in R^{(1,3)}$. Moreover, $a_w^{\tiny{\textcircled{\#}}}=w^{\parallel a}a^{(1,3)}$.

Several well known partial orders on a ring $R$ are given below.

(1) The minus partial order: $a\overset{-}\leq b$  if and only if there exists an inner inverse $a^-\in R$ of $a$ such that $a^-a =a^-b$ and $aa^- = ba^-$.

(2) The plus partial order:  $a\overset{+}\leq b$ if and only if  there exists a reflexive inverse $a^+\in R$ of $a$ such that $a^+a =a^+b$ and $aa^+ = ba^+$.

(3) The sharp partial order: $a\overset{\#}\leq b$  if and only if there exists the group inverse $a^\#\in R$ of $a$ such that $a^\#a=a^\#b$ and $aa^\#=ba^\#$.

(4) The star partial order:  $a\overset{*}\leq b$  if and only if $a^*a=a^*b$ and $aa^*=ba^*$. In particular, if $a\in R^\dag$, then $a\overset{*}\leq b$  if and only if $a^\dag a=a^\dag b$ and $aa^\dag=ba^\dag$.

(5) The diamond partial order: $a\overset{\diamond}\leq b$ if and only if $aa^*a=ab^*a$, $aR\subseteq bR$ and $Ra\subseteq Rb$.

(6) The core partial order: $a\overset{\tiny{\textcircled{\tiny\#}}}\leq b$ if and only if there exists the core inverse $a^{\tiny{\textcircled{\#}}}\in R$ of $a$ such that $a^{\tiny{\textcircled{\#}}}a=a^{\tiny{\textcircled{\#}}}b$ and $aa^{\tiny{\textcircled{\#}}}=ba^{\tiny{\textcircled{\#}}}$.

\section{The $w$-core partial order}

In this section, we aim to define a class of partial orders and to give its characterizations in a ring $R$.

\begin{definition} \label{w core relation} Let $a,b,w\in R$ with $a,b\in R_w^{\tiny{\textcircled{\#}}} $. We say that $a$ is below $b$ under the $w$-core relation and write $a\overset{\tiny{\textcircled{\#}}}\leq_w b$ if $a_w^{\tiny{\textcircled{\#}}} a=a_w^{\tiny{\textcircled{\#}}} b$ and $awa_w^{\tiny{\textcircled{\#}}} =bwa_w^{\tiny{\textcircled{\#}}}$.
\end{definition}

We next show that the relation $a\overset{\tiny{\textcircled{\#}}}\leq_w b$ is a partial order. First, an auxiliary lemma is given below.

\begin{lemma} \label{w-core par Lemma} Let $a,b,w\in R$ with $a,b\in R_w^{\tiny{\textcircled{\#}}} $. If $a_w^{\tiny{\textcircled{\#}}} a=a_w^{\tiny{\textcircled{\#}}} b$ and $awa_w^{\tiny{\textcircled{\#}}} =bwa_w^{\tiny{\textcircled{\#}}}$, then we have

\emph{(i)} $a_w^{\tiny{\textcircled{\#}}}a=b_w^{\tiny{\textcircled{\#}}}a$.

\emph{(ii)} $awa_w^{\tiny{\textcircled{\#}}}=awb_w^{\tiny{\textcircled{\#}}}$.

\emph{(iii)} $awb_w^{\tiny{\textcircled{\#}}}a=a$.

\emph{(iv)} $b_w^{\tiny{\textcircled{\#}}}a_w^{\tiny{\textcircled{\#}}}=(a_w^{\tiny{\textcircled{\#}}})^2$.
\end{lemma}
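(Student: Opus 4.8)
The plan is to write $p:=a_w^{\tiny{\textcircled{\#}}}$ and $q:=b_w^{\tiny{\textcircled{\#}}}$ and to keep in play the working equations $awp^2=p$, $pawa=a$, $awpa=a$, $pawp=p$ with $(awp)^*=awp$ (and their analogues for $q$), together with the two hypotheses, abbreviated $pa=pb$ and $awp=bwp$. The first routine move is to left-multiply $pa=pb$ by $aw$ and right-multiply $awp=bwp$ by $a$; using $awpa=a$ this yields the two identities $awpb=a$ and $bwpa=a$, which I will use repeatedly. I will also record that $e:=awp=bwp$ and $f:=bwq$ are Hermitian idempotents (each is self-adjoint by definition, and idempotent because $awpa=a$, resp.\ $bwqb=b$), and that $eR=aR$ and $fR=bR$, since $ea=a$, $e=a(wp)\in aR$ and $fb=b$, $f=b(wq)\in bR$.

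For (ii) and (iii) I would argue on the right. A short computation gives the key identity $awq=(awp)(bwq)$: indeed $(awp)(bwq)=aw(pb)wq=aw(pa)wq=(awpa)wq=awq$, using $pb=pa$ and $awpa=a$. Next, $bwpa=a$ shows $a=b(wpa)\in bR$, so $aR\subseteq bR$, that is $eR\subseteq fR$; for the Hermitian idempotents $e,f$ this forces $fe=e$ and hence $ef=e$. Combining, $awq=ef=e=awp$, which is (ii); then (iii) is immediate since $awqa=awpa=a$.

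The genuinely different step is (i), $pa=qa$, because the defining equations make only $awp$ (not $pa$) self-adjoint, so the right-sided projection argument has no left-sided mirror. My plan is to recognise $pa$ and $qb$ as inverses along an element. From $pawa=a$ and $awpa=a$ one reads off $(pa)wa=a=aw(pa)$, while $pa\in Ra$ trivially and $pa=(awp^2)a\in aR$; thus $pa$ satisfies exactly the defining conditions of the inverse of $w$ along $a$, so by uniqueness $pa=w^{\parallel a}$, and likewise $qb=w^{\parallel b}$. Now $bwpa=a$ becomes $a=bw\,w^{\parallel a}$; feeding this into $w^{\parallel b}wb=b$ gives $w^{\parallel b}wa=w^{\parallel b}w(bw\,w^{\parallel a})=(w^{\parallel b}wb)(w\,w^{\parallel a})=bw\,w^{\parallel a}=a$, and then, using $w^{\parallel a}\in aR$, that $w^{\parallel b}w\,w^{\parallel a}=w^{\parallel a}$. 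Finally $qa=q\bigl(bw\,w^{\parallel a}\bigr)=(qb)\,w\,w^{\parallel a}=w^{\parallel b}w\,w^{\parallel a}=w^{\parallel a}=pa$, which is (i).

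With (i) in hand, (iv) is a one-liner: $qp=q(awp^2)=(qaw)p^2=(paw)p^2=p(awp^2)=p^2$, where $qaw=paw$ comes from (i) and $awp^2=p$ is a defining equation. The step I expect to be the main obstacle is (i). The right-sided argument used for (ii) has no mirror here, because the $w$-core inverse makes only $awp$ self-adjoint, not $pa$; and the purely multiplicative identities one can write down turn out to be circular, reducing $pa=qa$ to equivalent statements such as $qaw=paw$ or $qawp=p$ with no way to cancel the non-invertible factor $aw$. What breaks this deadlock is the observation that $pa$ and $qb$ are themselves inverses along an element, so their values are forced by uniqueness; this is the crux, and once it is made, (iv) follows at once from (i).
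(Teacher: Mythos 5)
Your proof is correct, and at the two substantive points it goes by a different route than the paper's. For (ii) the difference is mild: the paper inserts $a=awa_w^{\tiny{\textcircled{\#}}}a$ into $awb_w^{\tiny{\textcircled{\#}}}$, rewrites the result as $(awa_w^{\tiny{\textcircled{\#}}})^*(bwb_w^{\tiny{\textcircled{\#}}})^*=(bwb_w^{\tiny{\textcircled{\#}}}awa_w^{\tiny{\textcircled{\#}}})^*$, and collapses this using the hypotheses and $bwb_w^{\tiny{\textcircled{\#}}}b=b$; your version packages the same self-adjointness trick structurally, as the statement that $e=awa_w^{\tiny{\textcircled{\#}}}$ and $f=bwb_w^{\tiny{\textcircled{\#}}}$ are Hermitian idempotents with $eR=aR\subseteq bR=fR$, whence $fe=e$ and $ef=(fe)^*=e$. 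The real divergence is (i). The paper proves it by exactly the kind of direct multiplicative chain you predicted would be circular:
$b_w^{\tiny{\textcircled{\#}}}a=b_w^{\tiny{\textcircled{\#}}}awa_w^{\tiny{\textcircled{\#}}}a
=b_w^{\tiny{\textcircled{\#}}}bwa_w^{\tiny{\textcircled{\#}}}a
=b_w^{\tiny{\textcircled{\#}}}bwbw(a_w^{\tiny{\textcircled{\#}}})^2a
=bw(a_w^{\tiny{\textcircled{\#}}})^2a
=aw(a_w^{\tiny{\textcircled{\#}}})^2a
=a_w^{\tiny{\textcircled{\#}}}a$;
the non-obvious move is expanding $a_w^{\tiny{\textcircled{\#}}}=aw(a_w^{\tiny{\textcircled{\#}}})^2$ so that the hypothesis $awa_w^{\tiny{\textcircled{\#}}}=bwa_w^{\tiny{\textcircled{\#}}}$ can fire twice, turning both $aw$'s into $bw$'s, after which $b_w^{\tiny{\textcircled{\#}}}bwb=b$ collapses the left end — so the deadlock you describe is in fact avoidable. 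Your alternative — checking from the defining equations that $a_w^{\tiny{\textcircled{\#}}}a$ and $b_w^{\tiny{\textcircled{\#}}}b$ satisfy the definition of the inverse of $w$ along $a$, respectively along $b$, and invoking uniqueness to get $a_w^{\tiny{\textcircled{\#}}}a=w^{\parallel a}$ and $b_w^{\tiny{\textcircled{\#}}}b=w^{\parallel b}$ — is also valid, and is arguably more explanatory: it exhibits (i) as an equality of canonical elements determined by $a$ and $w$ alone, and this identification is one the paper itself uses later (in the proofs of Theorems \ref{projection1} and \ref{idempotent1}). What the paper's computation buys in exchange is self-containedness: it never leaves the defining equations of the $w$-core inverse, whereas your argument leans on the existence–uniqueness framework for the inverse along an element. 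Parts (iii) and (iv) of your proposal coincide with the paper's proof.
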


\begin{proof} (i) As $awa_w^{\tiny{\textcircled{\#}}}a=a$ and $aw(a_w^{\tiny{\textcircled{\#}}})^2=a_w^{\tiny{\textcircled{\#}}}$, then we have
\begin{eqnarray*}
b_w^{\tiny{\textcircled{\#}}}a&=&b_w^{\tiny{\textcircled{\#}}}awa_w^{\tiny{\textcircled{\#}}}a
=b_w^{\tiny{\textcircled{\#}}}bwa_w^{\tiny{\textcircled{\#}}}a
=b_w^{\tiny{\textcircled{\#}}}bwaw(a_w^{\tiny{\textcircled{\#}}})^2a\\
&=&b_w^{\tiny{\textcircled{\#}}}bwbw(a_w^{\tiny{\textcircled{\#}}})^2a
=bw(a_w^{\tiny{\textcircled{\#}}})^2a=aw(a_w^{\tiny{\textcircled{\#}}})^2a\\
&=&a_w^{\tiny{\textcircled{\#}}}a.
\end{eqnarray*}

(ii) We have $awa_w^{\tiny{\textcircled{\#}}}=awb_w^{\tiny{\textcircled{\#}}}$. Indeed,
\begin{eqnarray*}
awb_w^{\tiny{\textcircled{\#}}}&=&(awa_w^{\tiny{\textcircled{\#}}}a)wb_w^{\tiny{\textcircled{\#}}}
=aw(a_w^{\tiny{\textcircled{\#}}}b)wb_w^{\tiny{\textcircled{\#}}}
=(awa_w^{\tiny{\textcircled{\#}}})^*(bwb_w^{\tiny{\textcircled{\#}}})^*\\
&=&(bwb_w^{\tiny{\textcircled{\#}}}awa_w^{\tiny{\textcircled{\#}}})^*
=(bwb_w^{\tiny{\textcircled{\#}}}bwa_w^{\tiny{\textcircled{\#}}})^*=(bwa_w^{\tiny{\textcircled{\#}}})^*
=(awa_w^{\tiny{\textcircled{\#}}})^*\\
&=&awa_w^{\tiny{\textcircled{\#}}}.
\end{eqnarray*}

(iii) By (ii), $awb_w^{\tiny{\textcircled{\#}}}a=awa_w^{\tiny{\textcircled{\#}}}a=a$.

(iv) Note that $a_w^{\tiny{\textcircled{\#}}}a=b_w^{\tiny{\textcircled{\#}}}a$ in (i). Then $b_w^{\tiny{\textcircled{\#}}}a_w^{\tiny{\textcircled{\#}}}
=b_w^{\tiny{\textcircled{\#}}}aw(a_w^{\tiny{\textcircled{\#}}})^2
=a_w^{\tiny{\textcircled{\#}}}aw(a_w^{\tiny{\textcircled{\#}}})^2
=(a_w^{\tiny{\textcircled{\#}}})^2$.
 \hfill$\Box$
\end{proof}

\begin{theorem}
The relation $a\overset{\tiny{\textcircled{\#}}}\leq_w b$ of Definition $\ref{w core relation}$ is a partial order on $R$.
\end{theorem}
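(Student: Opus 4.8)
The plan is to verify the three defining properties of a partial order --- reflexivity, antisymmetry, and transitivity --- for the relation $\overset{\tiny{\textcircled{\#}}}\leq_w$, drawing throughout on the basic identities of the $w$-core inverse recorded in the excerpt, namely $awa_w^{\tiny{\textcircled{\#}}}a=a$, $aw(a_w^{\tiny{\textcircled{\#}}})^2=a_w^{\tiny{\textcircled{\#}}}$ and $a_w^{\tiny{\textcircled{\#}}}awa_w^{\tiny{\textcircled{\#}}}=a_w^{\tiny{\textcircled{\#}}}$, together with Lemma \ref{w-core par Lemma}. Reflexivity is immediate: taking $b=a$ in Definition \ref{w core relation}, both required equalities $a_w^{\tiny{\textcircled{\#}}}a=a_w^{\tiny{\textcircled{\#}}}a$ and $awa_w^{\tiny{\textcircled{\#}}}=awa_w^{\tiny{\textcircled{\#}}}$ hold trivially, so $a\overset{\tiny{\textcircled{\#}}}\leq_w a$.

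For antisymmetry, suppose $a\overset{\tiny{\textcircled{\#}}}\leq_w b$ and $b\overset{\tiny{\textcircled{\#}}}\leq_w a$. I would start from $a=awa_w^{\tiny{\textcircled{\#}}}a$ and rewrite it into $b$ by a single chain of substitutions: replace $awa_w^{\tiny{\textcircled{\#}}}$ by $bwa_w^{\tiny{\textcircled{\#}}}$ (the first hypothesis of $a\overset{\tiny{\textcircled{\#}}}\leq_w b$), then $a_w^{\tiny{\textcircled{\#}}}a$ by $a_w^{\tiny{\textcircled{\#}}}b$ (the second hypothesis), then $a_w^{\tiny{\textcircled{\#}}}b$ by $b_w^{\tiny{\textcircled{\#}}}b$ (which is Lemma \ref{w-core par Lemma}(i) applied to $b\overset{\tiny{\textcircled{\#}}}\leq_w a$), arriving at $a=bwb_w^{\tiny{\textcircled{\#}}}b=b$. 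Thus $a=b$.

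The real work is transitivity: assuming $a\overset{\tiny{\textcircled{\#}}}\leq_w b$ and $b\overset{\tiny{\textcircled{\#}}}\leq_w c$, I must produce the two defining equalities $a_w^{\tiny{\textcircled{\#}}}a=a_w^{\tiny{\textcircled{\#}}}c$ and $awa_w^{\tiny{\textcircled{\#}}}=cwa_w^{\tiny{\textcircled{\#}}}$. The difficulty is that the hypotheses coming from $a\overset{\tiny{\textcircled{\#}}}\leq_w b$ involve $a_w^{\tiny{\textcircled{\#}}}$ while those from $b\overset{\tiny{\textcircled{\#}}}\leq_w c$ involve $b_w^{\tiny{\textcircled{\#}}}$, so I must bridge the two inverses. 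The key device is to manufacture two factorizations of $a_w^{\tiny{\textcircled{\#}}}$ that expose $b_w^{\tiny{\textcircled{\#}}}$ on the appropriate side. Starting from $a_w^{\tiny{\textcircled{\#}}}=a_w^{\tiny{\textcircled{\#}}}awa_w^{\tiny{\textcircled{\#}}}$ and applying Lemma \ref{w-core par Lemma}(ii), I obtain $a_w^{\tiny{\textcircled{\#}}}=a_w^{\tiny{\textcircled{\#}}}awb_w^{\tiny{\textcircled{\#}}}$, with $b_w^{\tiny{\textcircled{\#}}}$ on the \emph{right}; starting again from $a_w^{\tiny{\textcircled{\#}}}=a_w^{\tiny{\textcircled{\#}}}awa_w^{\tiny{\textcircled{\#}}}$, applying Lemma \ref{w-core par Lemma}(i) and then the first hypothesis of $a\overset{\tiny{\textcircled{\#}}}\leq_w b$, I obtain $a_w^{\tiny{\textcircled{\#}}}=b_w^{\tiny{\textcircled{\#}}}bwa_w^{\tiny{\textcircled{\#}}}$, with $b_w^{\tiny{\textcircled{\#}}}$ on the \emph{left}.

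With the first factorization, $a_w^{\tiny{\textcircled{\#}}}b=a_w^{\tiny{\textcircled{\#}}}awb_w^{\tiny{\textcircled{\#}}}b=a_w^{\tiny{\textcircled{\#}}}awb_w^{\tiny{\textcircled{\#}}}c=a_w^{\tiny{\textcircled{\#}}}c$, the middle step using $b_w^{\tiny{\textcircled{\#}}}b=b_w^{\tiny{\textcircled{\#}}}c$ from $b\overset{\tiny{\textcircled{\#}}}\leq_w c$; combined with $a_w^{\tiny{\textcircled{\#}}}a=a_w^{\tiny{\textcircled{\#}}}b$ this yields the first equality. With the second factorization, $bwa_w^{\tiny{\textcircled{\#}}}=bwb_w^{\tiny{\textcircled{\#}}}bwa_w^{\tiny{\textcircled{\#}}}=cwb_w^{\tiny{\textcircled{\#}}}bwa_w^{\tiny{\textcircled{\#}}}=cwa_w^{\tiny{\textcircled{\#}}}$, using $bwb_w^{\tiny{\textcircled{\#}}}=cwb_w^{\tiny{\textcircled{\#}}}$ from $b\overset{\tiny{\textcircled{\#}}}\leq_w c$; combined with $awa_w^{\tiny{\textcircled{\#}}}=bwa_w^{\tiny{\textcircled{\#}}}$ this yields the second equality, completing transitivity. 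I expect the main obstacle to be precisely spotting these two one-sided factorizations of $a_w^{\tiny{\textcircled{\#}}}$; once Lemma \ref{w-core par Lemma} is available they drop out immediately, and everything else reduces to routine substitution of the hypotheses.
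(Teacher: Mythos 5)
Your proposal is correct and follows essentially the same route as the paper: reflexivity is trivial, antisymmetry is obtained by rewriting $a=awa_w^{\tiny{\textcircled{\#}}}a$ into $bwb_w^{\tiny{\textcircled{\#}}}b=b$ via the hypotheses and Lemma \ref{w-core par Lemma}(i), and transitivity uses exactly the paper's ingredients --- Lemma \ref{w-core par Lemma}(i),(ii) together with $awa_w^{\tiny{\textcircled{\#}}}a=a$ and $a_w^{\tiny{\textcircled{\#}}}awa_w^{\tiny{\textcircled{\#}}}=a_w^{\tiny{\textcircled{\#}}}$ --- with your two ``one-sided factorizations'' $a_w^{\tiny{\textcircled{\#}}}=a_w^{\tiny{\textcircled{\#}}}awb_w^{\tiny{\textcircled{\#}}}=b_w^{\tiny{\textcircled{\#}}}bwa_w^{\tiny{\textcircled{\#}}}$ being merely a prepackaged form of the substitution chains the paper writes out term by term.
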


\begin{proof} To prove that $a\overset{\tiny{\textcircled{\#}}}\leq_w b$ is a partial order. It suffices to show (1) reflexivity, i.e., $a\overset{\tiny{\textcircled{\#}}}\leq_w a$, (2) antisymmetry, i.e., $a\overset{\tiny{\textcircled{\#}}}\leq_w b$  and $b\overset{\tiny{\textcircled{\#}}}\leq_w a$  imply $a=b$, (3) transitivity, i.e., $a\overset{\tiny{\textcircled{\#}}}\leq_w b$ and $b\overset{\tiny{\textcircled{\#}}}\leq_w c$  give $a\overset{\tiny{\textcircled{\#}}}\leq_w c$.

(1) The reflexivity is clear.

(2) Suppose $a\overset{\tiny{\textcircled{\#}}}\leq_w b$, i.e., $a_w^{\tiny{\textcircled{\#}}} a=a_w^{\tiny{\textcircled{\#}}} b$ and $awa_w^{\tiny{\textcircled{\#}}} =bwa_w^{\tiny{\textcircled{\#}}}$. Then $a=awa_w^{\tiny{\textcircled{\#}}}a=bwa_w^{\tiny{\textcircled{\#}}}a=bwb_w^{\tiny{\textcircled{\#}}}a$ by Lemma \ref{w-core par Lemma}(i). Suppose in addition that $b\overset{\tiny{\textcircled{\#}}}\leq_w a$, i.e., $b_w^{\tiny{\textcircled{\#}}} b=b_w^{\tiny{\textcircled{\#}}} a$ and $bwb_w^{\tiny{\textcircled{\#}}} =awb_w^{\tiny{\textcircled{\#}}}$. Then $b=bwb_w^{\tiny{\textcircled{\#}}}b=bwb_w^{\tiny{\textcircled{\#}}}a$, which together with $a=bwb_w^{\tiny{\textcircled{\#}}}a$ give $a=b$.

(3) Assume $a\overset{\tiny{\textcircled{\#}}}\leq_w b$ and $b \leq_w^{\tiny{\textcircled{\#}}} c$, i.e., $a_w^{\tiny{\textcircled{\#}}} a=a_w^{\tiny{\textcircled{\#}}} b$, $awa_w^{\tiny{\textcircled{\#}}} =bwa_w^{\tiny{\textcircled{\#}}}$, $b_w^{\tiny{\textcircled{\#}}} b=b_w^{\tiny{\textcircled{\#}}} c$ and $bwb_w^{\tiny{\textcircled{\#}}} =cwb_w^{\tiny{\textcircled{\#}}}$. Then, by the equality $awa_w^{\tiny{\textcircled{\#}}}=awb_w^{\tiny{\textcircled{\#}}}$ of Lemma \ref{w-core par Lemma}(ii), we get $a_w^{\tiny{\textcircled{\#}}}a=a_w^{\tiny{\textcircled{\#}}}b=a_w^{\tiny{\textcircled{\#}}}bwb_w^{\tiny{\textcircled{\#}}}b
=a_w^{\tiny{\textcircled{\#}}}bwb_w^{\tiny{\textcircled{\#}}}c=a_w^{\tiny{\textcircled{\#}}}awb_w^{\tiny{\textcircled{\#}}}c
=a_w^{\tiny{\textcircled{\#}}}awa_w^{\tiny{\textcircled{\#}}}c=a_w^{\tiny{\textcircled{\#}}}c$.

Similarly, we have
\begin{eqnarray*}
awa_w^{\tiny{\textcircled{\#}}}&=&bwa_w^{\tiny{\textcircled{\#}}}=bwb_w^{\tiny{\textcircled{\#}}}bwa_w^{\tiny{\textcircled{\#}}}
=cwb_w^{\tiny{\textcircled{\#}}}bwa_w^{\tiny{\textcircled{\#}}}=cwb_w^{\tiny{\textcircled{\#}}}awa_w^{\tiny{\textcircled{\#}}}
=cwa_w^{\tiny{\textcircled{\#}}}awa_w^{\tiny{\textcircled{\#}}}\\
&=&cwa_w^{\tiny{\textcircled{\#}}}.
\end{eqnarray*}

The proof is completed.
 \hfill$\Box$
\end{proof}

From now on, the partial order  $\overset{\tiny{\textcircled{\#}}}\leq_w $ is called the $w$-core partial order. The $w$-core partial order can be seen as an extension of the core partial order \cite{Baksalary2010}. However, the $w$-core partial order may not imply the core partial order in general. See Example \ref{ex1} below. Specially, by fixing the element $w\in R$, we in Theorem \ref{three class partial orders} below show that the $w$-core partial order $\overset{\tiny{\textcircled{\#}}}\leq_w$ coincides with the classical star partial order $\overset{*}\leq $ and the core partial order $\overset{\tiny{\textcircled{\#}}}\leq $, respectively.

\begin{example} \label{ex1} {\rm Let $R=M_2(\mathbb{C})$ be the ring of all $2 \times 2$ complex matrices and let the involution $*$ be the conjugate transpose. Take, for example, $a=\begin{bmatrix}
1 & 1 \\
0 & 0 \\
\end{bmatrix}$, $b=\begin{bmatrix}
1 & 1 \\
2 & -2 \\
\end{bmatrix}
$, $w=\begin{bmatrix}
           1 & 0 \\
           1 & 0 \\
         \end{bmatrix}
\in R$, then $a_w^{\tiny{\textcircled{\#}}}=
\begin{bmatrix}
\frac{1}{2} & 0 \\
0 & 0 \\
\end{bmatrix}$ and $a^{\tiny{\textcircled{\#}}}=
\begin{bmatrix}
           1 & 0 \\
           0 & 0 \\
         \end{bmatrix}$. We have $a_w^{\tiny{\textcircled{\#}}}a=a_w^{\tiny{\textcircled{\#}}}b=
\begin{bmatrix}
\frac{1}{2} & \frac{1}{2} \\
0 & 0 \\
\end{bmatrix}$ and $awa_w^{\tiny{\textcircled{\#}}}=bwa_w^{\tiny{\textcircled{\#}}}=
\begin{bmatrix}
1 & 0 \\
 0 & 0 \\
 \end{bmatrix}$. Hence, $a\overset{\tiny{\textcircled{\#}}}\leq_w b$. However, $aa^{\tiny{\textcircled{\#}}}=
\begin{bmatrix}
1 & 0 \\
 0 & 0 \\
 \end{bmatrix} \neq
 \begin{bmatrix}
1 & 0 \\
 2 & 0 \\
 \end{bmatrix}=ba^{\tiny{\textcircled{\#}}}$.}
\end{example}

We next give the characterization of the $w$-core partial order $a\overset{\tiny{\textcircled{\#}}}\leq_w b$ in $R$.

\begin{proposition} Let $a,b,w\in R$ with $a,b\in R_w^{\tiny{\textcircled{\#}}}$. Then the following conditions are equivalent{\rm:}

\emph{(i)} $a\overset{\tiny{\textcircled{\#}}}\leq_w b$.

\emph{(ii)} $a_w^{\tiny{\textcircled{\#}}}b=b_w^{\tiny{\textcircled{\#}}}a$, $bwa_w^{\tiny{\textcircled{\#}}}=awb_w^{\tiny{\textcircled{\#}}}$ and $awb_w^{\tiny{\textcircled{\#}}}a=a$.
\end{proposition}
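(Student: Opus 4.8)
The plan is to prove the equivalence of (i) and (ii) by showing each direction separately, leveraging the auxiliary Lemma \ref{w-core par Lemma} which already does most of the work for one direction.

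First I would establish that (i) implies (ii). Assuming $a\overset{\tiny{\textcircled{\#}}}\leq_w b$, the definition gives $a_w^{\tiny{\textcircled{\#}}} a=a_w^{\tiny{\textcircled{\#}}} b$ and $awa_w^{\tiny{\textcircled{\#}}} =bwa_w^{\tiny{\textcircled{\#}}}$. From Lemma \ref{w-core par Lemma}(i) we have $a_w^{\tiny{\textcircled{\#}}}a=b_w^{\tiny{\textcircled{\#}}}a$, and combining with the defining identity $a_w^{\tiny{\textcircled{\#}}}a = a_w^{\tiny{\textcircled{\#}}}b$ yields the first condition $a_w^{\tiny{\textcircled{\#}}}b=b_w^{\tiny{\textcircled{\#}}}a$. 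Similarly, Lemma \ref{w-core par Lemma}(ii) gives $awa_w^{\tiny{\textcircled{\#}}}=awb_w^{\tiny{\textcircled{\#}}}$, and combined with the defining identity $awa_w^{\tiny{\textcircled{\#}}}=bwa_w^{\tiny{\textcircled{\#}}}$ this produces $bwa_w^{\tiny{\textcircled{\#}}}=awb_w^{\tiny{\textcircled{\#}}}$, the second condition. Finally, Lemma \ref{w-core par Lemma}(iii) is precisely $awb_w^{\tiny{\textcircled{\#}}}a=a$, the third condition. So this direction is essentially immediate once the lemma is in hand.

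The substantive work lies in the converse, (ii) implies (i). Here I would start from the three hypotheses $a_w^{\tiny{\textcircled{\#}}}b=b_w^{\tiny{\textcircled{\#}}}a$, $bwa_w^{\tiny{\textcircled{\#}}}=awb_w^{\tiny{\textcircled{\#}}}$, $awb_w^{\tiny{\textcircled{\#}}}a=a$, and must recover the two defining identities of the $w$-core partial order. To get $awa_w^{\tiny{\textcircled{\#}}}=bwa_w^{\tiny{\textcircled{\#}}}$, I would use the second hypothesis to replace $bwa_w^{\tiny{\textcircled{\#}}}$ by $awb_w^{\tiny{\textcircled{\#}}}$ and then try to collapse $awb_w^{\tiny{\textcircled{\#}}}$ back to $awa_w^{\tiny{\textcircled{\#}}}$; the key tool is the Hermitian/idempotent structure of $awa_w^{\tiny{\textcircled{\#}}}$ coming from $(awa_w^{\tiny{\textcircled{\#}}})^*=awa_w^{\tiny{\textcircled{\#}}}$ together with $awa_w^{\tiny{\textcircled{\#}}}a=a$. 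A natural route is to multiply the third hypothesis $awb_w^{\tiny{\textcircled{\#}}}a=a$ on the right by $w(a_w^{\tiny{\textcircled{\#}}})^2$, using $aw(a_w^{\tiny{\textcircled{\#}}})^2=a_w^{\tiny{\textcircled{\#}}}$, to obtain $awb_w^{\tiny{\textcircled{\#}}}a_w^{\tiny{\textcircled{\#}}}=a_w^{\tiny{\textcircled{\#}}}$, and then multiply by $aw$ on the left to link $awb_w^{\tiny{\textcircled{\#}}}$ with $awa_w^{\tiny{\textcircled{\#}}}$. For the other identity $a_w^{\tiny{\textcircled{\#}}}a=a_w^{\tiny{\textcircled{\#}}}b$, I would combine the first hypothesis $a_w^{\tiny{\textcircled{\#}}}b=b_w^{\tiny{\textcircled{\#}}}a$ with the idempotent-type relation $a_w^{\tiny{\textcircled{\#}}}awa_w^{\tiny{\textcircled{\#}}}=a_w^{\tiny{\textcircled{\#}}}$ and the third hypothesis to rewrite $b_w^{\tiny{\textcircled{\#}}}a$ as $a_w^{\tiny{\textcircled{\#}}}a$; concretely, writing $a=awb_w^{\tiny{\textcircled{\#}}}a$ and feeding it through $a_w^{\tiny{\textcircled{\#}}}$ on the left should reduce $a_w^{\tiny{\textcircled{\#}}}a$ to the desired form.

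I expect the main obstacle to be the converse direction, specifically managing the noncommutativity when shuttling between the $a$-flavoured and $b$-flavoured $w$-core inverses without access to the full defining equations of $b\overset{\tiny{\textcircled{\#}}}\leq_w b$. The crucial defining properties I would repeatedly invoke are the $w$-core inverse axioms $awa_w^{\tiny{\textcircled{\#}}}a=a$, $aw(a_w^{\tiny{\textcircled{\#}}})^2=a_w^{\tiny{\textcircled{\#}}}$, $a_w^{\tiny{\textcircled{\#}}}awa_w^{\tiny{\textcircled{\#}}}=a_w^{\tiny{\textcircled{\#}}}$, and the Hermitian condition $(awa_w^{\tiny{\textcircled{\#}}})^*=awa_w^{\tiny{\textcircled{\#}}}$ (all from \cite{Zhu2020}), together with their $b$-analogues. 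The delicate point is ensuring that the third hypothesis $awb_w^{\tiny{\textcircled{\#}}}a=a$ genuinely forces the symmetry needed to absorb the difference between $a_w^{\tiny{\textcircled{\#}}}$ and $b_w^{\tiny{\textcircled{\#}}}$; if a direct substitution stalls, I would fall back on applying the involution to a Hermitian product and transposing factors, mirroring the trick used in the proof of Lemma \ref{w-core par Lemma}(ii).
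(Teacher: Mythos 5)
Your overall route is the same as the paper's: for (i) $\Rightarrow$ (ii) you read everything off from Lemma \ref{w-core par Lemma}(i)--(iii) together with the defining identities, and for (ii) $\Rightarrow$ (i) you substitute $a=awb_w^{\tiny{\textcircled{\#}}}a$ and shuttle between the two inverses via the first two hypotheses. The forward direction is correct exactly as in the paper, and your derivation of the first identity in the converse, namely $a_w^{\tiny{\textcircled{\#}}}a=a_w^{\tiny{\textcircled{\#}}}(awb_w^{\tiny{\textcircled{\#}}}a)=a_w^{\tiny{\textcircled{\#}}}aw(a_w^{\tiny{\textcircled{\#}}}b)=(a_w^{\tiny{\textcircled{\#}}}awa_w^{\tiny{\textcircled{\#}}})b=a_w^{\tiny{\textcircled{\#}}}b$, is literally the paper's computation.

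However, the specific chain you propose for the second identity misfires. Right-multiplying $awb_w^{\tiny{\textcircled{\#}}}a=a$ by $w(a_w^{\tiny{\textcircled{\#}}})^2$ does give $awb_w^{\tiny{\textcircled{\#}}}a_w^{\tiny{\textcircled{\#}}}=a_w^{\tiny{\textcircled{\#}}}$, equivalently (inserting hypothesis $awb_w^{\tiny{\textcircled{\#}}}=bwa_w^{\tiny{\textcircled{\#}}}$) $bw(a_w^{\tiny{\textcircled{\#}}})^2=a_w^{\tiny{\textcircled{\#}}}$; but then left-multiplying by $aw$ only yields $awbw(a_w^{\tiny{\textcircled{\#}}})^2=awa_w^{\tiny{\textcircled{\#}}}$, whereas the target $bwa_w^{\tiny{\textcircled{\#}}}=bwaw(a_w^{\tiny{\textcircled{\#}}})^2$ has the factors $aw$ and $bw$ in the opposite order. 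Passing from one to the other needs a commutation of type $awb=bwa$, which is not available here (it is an extra hypothesis only in Theorem \ref{difference w-core}). The repair stays entirely inside your plan and needs no involution trick: substitute the second hypothesis into the third one first, getting $bwa_w^{\tiny{\textcircled{\#}}}a=a$, then right-multiply by $wa_w^{\tiny{\textcircled{\#}}}$ and collapse with $a_w^{\tiny{\textcircled{\#}}}awa_w^{\tiny{\textcircled{\#}}}=a_w^{\tiny{\textcircled{\#}}}$, so that $awa_w^{\tiny{\textcircled{\#}}}=(bwa_w^{\tiny{\textcircled{\#}}}a)wa_w^{\tiny{\textcircled{\#}}}=bw(a_w^{\tiny{\textcircled{\#}}}awa_w^{\tiny{\textcircled{\#}}})=bwa_w^{\tiny{\textcircled{\#}}}$; this is exactly the paper's proof.
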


\begin{proof} (i) $\Rightarrow$ (ii) It follows from Lemma \ref{w-core par Lemma}.

(ii) $\Rightarrow$ (i) We have $a_w^{\tiny{\textcircled{\#}}}a=a_w^{\tiny{\textcircled{\#}}}awb_w^{\tiny{\textcircled{\#}}}a
=a_w^{\tiny{\textcircled{\#}}}awa_w^{\tiny{\textcircled{\#}}}b=a_w^{\tiny{\textcircled{\#}}}b$, and $awa_w^{\tiny{\textcircled{\#}}}=(awb_w^{\tiny{\textcircled{\#}}}a)wa_w^{\tiny{\textcircled{\#}}}
=(bwa_w^{\tiny{\textcircled{\#}}})awa_w^{\tiny{\textcircled{\#}}}
=bw(a_w^{\tiny{\textcircled{\#}}}awa_w^{\tiny{\textcircled{\#}}})=bwa_w^{\tiny{\textcircled{\#}}}$, as required.
\hfill$\Box$
\end{proof}

\begin{proposition} \label{new add pro 2.6} Let $a,b,w\in R$ with $a,b\in R_w^{\tiny{\textcircled{\#}}}$. If $a\overset{\tiny{\textcircled{\#}}}\leq_w b$, then

\emph{(i)} $a_w^{\tiny{\textcircled{\#}}}bwb_w^{\tiny{\textcircled{\#}}}=b_w^{\tiny{\textcircled{\#}}}bwa_w^{\tiny{\textcircled{\#}}}
=a_w^{\tiny{\textcircled{\#}}}bwa_w^{\tiny{\textcircled{\#}}}=a_w^{\tiny{\textcircled{\#}}}$.

\emph{(ii)} $a_w^{\tiny{\textcircled{\#}}}awb_w^{\tiny{\textcircled{\#}}}=b_w^{\tiny{\textcircled{\#}}}awa_w^{\tiny{\textcircled{\#}}}=
b_w^{\tiny{\textcircled{\#}}}awb_w^{\tiny{\textcircled{\#}}}=a_w^{\tiny{\textcircled{\#}}}$.
\end{proposition}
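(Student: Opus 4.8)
The plan is to reduce every one of the six products appearing in the statement to the single absorbing identity $a_w^{\tiny{\textcircled{\#}}}awa_w^{\tiny{\textcircled{\#}}}=a_w^{\tiny{\textcircled{\#}}}$, which is precisely the relation $xawx=x$ recorded for the $w$-core inverse in \cite[Lemma 2.2]{Zhu2020}. The only extra ingredients required are the two defining relations of the hypothesis $a\overset{\tiny{\textcircled{\#}}}\leq_w b$, namely $a_w^{\tiny{\textcircled{\#}}}a=a_w^{\tiny{\textcircled{\#}}}b$ and $awa_w^{\tiny{\textcircled{\#}}}=bwa_w^{\tiny{\textcircled{\#}}}$, together with the two consequences $a_w^{\tiny{\textcircled{\#}}}a=b_w^{\tiny{\textcircled{\#}}}a$ and $awa_w^{\tiny{\textcircled{\#}}}=awb_w^{\tiny{\textcircled{\#}}}$ supplied by Lemma \ref{w-core par Lemma}(i) and (ii). Each product is rewritten by a single such substitution applied at the appropriate block.

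For (i) I would argue as follows. In $a_w^{\tiny{\textcircled{\#}}}bwa_w^{\tiny{\textcircled{\#}}}$, replace the left block $a_w^{\tiny{\textcircled{\#}}}b$ by $a_w^{\tiny{\textcircled{\#}}}a$ to obtain $a_w^{\tiny{\textcircled{\#}}}awa_w^{\tiny{\textcircled{\#}}}=a_w^{\tiny{\textcircled{\#}}}$. In $a_w^{\tiny{\textcircled{\#}}}bwb_w^{\tiny{\textcircled{\#}}}$, first make the same replacement $a_w^{\tiny{\textcircled{\#}}}b=a_w^{\tiny{\textcircled{\#}}}a$, reaching $a_w^{\tiny{\textcircled{\#}}}awb_w^{\tiny{\textcircled{\#}}}$, and then invoke Lemma \ref{w-core par Lemma}(ii) to turn $awb_w^{\tiny{\textcircled{\#}}}$ into $awa_w^{\tiny{\textcircled{\#}}}$, again arriving at $a_w^{\tiny{\textcircled{\#}}}awa_w^{\tiny{\textcircled{\#}}}=a_w^{\tiny{\textcircled{\#}}}$. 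Finally, in $b_w^{\tiny{\textcircled{\#}}}bwa_w^{\tiny{\textcircled{\#}}}$, use the hypothesis in the form $bwa_w^{\tiny{\textcircled{\#}}}=awa_w^{\tiny{\textcircled{\#}}}$ and then Lemma \ref{w-core par Lemma}(i) written as $b_w^{\tiny{\textcircled{\#}}}a=a_w^{\tiny{\textcircled{\#}}}a$, which once more collapses the product to $a_w^{\tiny{\textcircled{\#}}}awa_w^{\tiny{\textcircled{\#}}}=a_w^{\tiny{\textcircled{\#}}}$.

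Part (ii) is handled symmetrically. For $b_w^{\tiny{\textcircled{\#}}}awa_w^{\tiny{\textcircled{\#}}}$ I would apply Lemma \ref{w-core par Lemma}(i) ($b_w^{\tiny{\textcircled{\#}}}a=a_w^{\tiny{\textcircled{\#}}}a$) directly; for $a_w^{\tiny{\textcircled{\#}}}awb_w^{\tiny{\textcircled{\#}}}$ I would apply Lemma \ref{w-core par Lemma}(ii) ($awb_w^{\tiny{\textcircled{\#}}}=awa_w^{\tiny{\textcircled{\#}}}$); and for $b_w^{\tiny{\textcircled{\#}}}awb_w^{\tiny{\textcircled{\#}}}$ I would apply both in succession. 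In every case the product reduces to $a_w^{\tiny{\textcircled{\#}}}awa_w^{\tiny{\textcircled{\#}}}=a_w^{\tiny{\textcircled{\#}}}$.

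There is no genuine obstacle here: the proof is pure bookkeeping, and the real content is already packaged in Lemma \ref{w-core par Lemma} and in the absorbing identity $a_w^{\tiny{\textcircled{\#}}}awa_w^{\tiny{\textcircled{\#}}}=a_w^{\tiny{\textcircled{\#}}}$. The one point requiring slight care is to substitute on the correct side, using $a_w^{\tiny{\textcircled{\#}}}a=a_w^{\tiny{\textcircled{\#}}}b$ only where the pattern $a_w^{\tiny{\textcircled{\#}}}b$ genuinely occurs as a left block and $awa_w^{\tiny{\textcircled{\#}}}=awb_w^{\tiny{\textcircled{\#}}}$ only where the block $awb_w^{\tiny{\textcircled{\#}}}$ genuinely sits, so that each rewriting is a legitimate associativity grouping rather than an illicit cancellation.
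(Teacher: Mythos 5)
Your proposal is correct and takes essentially the same route as the paper: both reduce each of the six products to the absorbing identity $a_w^{\tiny{\textcircled{\#}}}awa_w^{\tiny{\textcircled{\#}}}=a_w^{\tiny{\textcircled{\#}}}$ using the two defining relations of $a\overset{\tiny{\textcircled{\#}}}\leq_w b$ together with Lemma \ref{w-core par Lemma}(i) and (ii). The only difference is presentational: the paper strings all seven expressions into a single chain of equalities (so that parts (i) and (ii) fall out together), whereas you verify the six identities one by one, but the substitutions used are identical.
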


\begin{proof}
(i) Given $a\overset{\tiny{\textcircled{\#}}}\leq_w b$, then $a_w^{\tiny{\textcircled{\#}}}b=a_w^{\tiny{\textcircled{\#}}} a=b_w^{\tiny{\textcircled{\#}}} a$ and $bwa_w^{\tiny{\textcircled{\#}}}=awa_w^{\tiny{\textcircled{\#}}}=awb_w^{\tiny{\textcircled{\#}}}$ by Lemma \ref{w-core par Lemma}. So, $a_w^{\tiny{\textcircled{\#}}}bwa_w^{\tiny{\textcircled{\#}}}=a_w^{\tiny{\textcircled{\#}}}awb_w^{\tiny{\textcircled{\#}}}
=a_w^{\tiny{\textcircled{\#}}}awa_w^{\tiny{\textcircled{\#}}}=a_w^{\tiny{\textcircled{\#}}}=b_w^{\tiny{\textcircled{\#}}}awa_w^{\tiny{\textcircled{\#}}}
=b_w^{\tiny{\textcircled{\#}}}bwa_w^{\tiny{\textcircled{\#}}}=b_w^{\tiny{\textcircled{\#}}}awb_w^{\tiny{\textcircled{\#}}}=
a_w^{\tiny{\textcircled{\#}}}bwb_w^{\tiny{\textcircled{\#}}}$.

(ii) By (i) and Lemma \ref{w-core par Lemma}.
\hfill$\Box$
\end{proof}

An element $e\in R$ is idempotent if $e=e^2$. If in addition, $e=e^*$, then $e$ is called a projection. It follows from \cite{Zhu2020} that if $a\in R_w^{\tiny{\textcircled{\#}}}$ then $a_w^{\tiny{\textcircled{\#}}}=w^{\parallel a}a^{(1,3)}$. So, $a_w^{\tiny{\textcircled{\#}}}aw=w^{\parallel a}a^{(1,3)}aw=w^{\parallel a}w$, $wa_w^{\tiny{\textcircled{\#}}}a=ww^{\parallel a}a^{(1,3)}a=ww^{\parallel a}$ and $awa_w^{\tiny{\textcircled{\#}}}=aww^{\parallel a}a^{(1,3)}=aa^{(1,3)}$. Clearly, $a_w^{\tiny{\textcircled{\#}}}aw$ and $wa_w^{\tiny{\textcircled{\#}}}a$ are both idempotents, and $awa_w^{\tiny{\textcircled{\#}}}$ is a projection.

In \cite[Lemma 2.1]{Marovt2016}, Marovt derived several characterizations for the idempotent $aa^\#$ in a ring. Inspired by Marovt's result, we establish several characterizations for the projection $awa_w^{\tiny{\textcircled{\#}}}$, the idempotents $a_w^{\tiny{\textcircled{\#}}}aw$ and $wa_w^{\tiny{\textcircled{\#}}}a$, respectively.

Given any $a\in R$, the symbol $a^{0}=\{x\in R:ax=0\}$ denotes all right annihilators of $a$. Dually, $^{0}a=\{x\in R:xa=0\}$ denotes all left annihilators of $a$. It should be noted that (see, e.g., \cite[Lemmas 2.5 and 2.6]{Rakic2014}) $aR=bR$ implies ${^0}a={^0}b$, and $Ra=Rb$ implies $a^0=b^0$ for any $a,b\in R$.

\begin{lemma} \label{projection} Let $a,w\in R$ with $a\in R_w^{\tiny{\textcircled{\#}}}$. Then the following conditions are equivalent{\rm :}

\emph{(i)} $p=awa_w^{\tiny{\textcircled{\#}}}$.

\emph{(ii)} $aR=pR$ for some projection $p\in R$.

\emph{(iii)} ${^0}a={^0}p$ for some projection $p\in R$.

\emph{(iv)} $a=pa$, ${^0}a\subseteq{^0}p$ for some projection $p\in R$.

\end{lemma}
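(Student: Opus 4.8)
The plan is to establish the cycle (i) $\Rightarrow$ (ii) $\Rightarrow$ (iii) $\Rightarrow$ (iv) $\Rightarrow$ (i), reading conditions (ii)--(iv) as asserting that the element $p$ is itself a projection satisfying the stated (co)range property, so that (i) pins $p$ down as one specific projection. Throughout I will abbreviate $q := awa_w^{\tiny{\textcircled{\#}}}$ and lean on three facts already recorded in the text: $q$ is a projection (so $q^2=q=q^*$), $qa = awa_w^{\tiny{\textcircled{\#}}}a = a$, and $q = a\,(wa_w^{\tiny{\textcircled{\#}}}) \in aR$.

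The three implications (i) $\Rightarrow$ (ii) $\Rightarrow$ (iii) $\Rightarrow$ (iv) should be short. For (i) $\Rightarrow$ (ii) I would sandwich $aR = pR$ between $a = qa \in qR$ (giving $aR\subseteq pR$) and $q \in aR$ (giving $pR\subseteq aR$). For (ii) $\Rightarrow$ (iii) I simply invoke the quoted implication that $aR = pR$ forces ${}^0a = {}^0p$. For (iii) $\Rightarrow$ (iv), the inclusion ${}^0a \subseteq {}^0p$ is automatic from the assumed equality, and $a = pa$ drops out of $1-p \in {}^0p = {}^0a$ (since $(1-p)p = p-p^2 = 0$).

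The substantive step is (iv) $\Rightarrow$ (i): recovering the full identity $p = q$ from the one-sided data $a = pa$ and ${}^0a \subseteq {}^0p$. My approach is to produce two equations. From $qa = a$ I get $1 - q \in {}^0a \subseteq {}^0p$, hence $(1-q)p=0$, i.e. $p = qp$; applying the involution and using that $p$ and $q$ are both self-adjoint idempotents upgrades this to $p = p^* = (qp)^* = p^*q^* = pq$. Separately, substituting $a = pa$ into the definition of $q$ gives $q = (pa)wa_w^{\tiny{\textcircled{\#}}} = p(awa_w^{\tiny{\textcircled{\#}}}) = pq$. The two relations $p = pq$ and $q = pq$ then force $p = q$, which is (i).

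I expect the only genuine obstacle to be this last implication, and within it the passage from a single annihilator \emph{inclusion} to an \emph{identity} of elements. The decisive device is to extract both $p = qp$ and $q = pq$ and to bridge $qp$ and $pq$ through the involution, which is legitimate precisely because both elements are projections; a one-sided hypothesis alone would yield only one of these relations and could not close the argument.
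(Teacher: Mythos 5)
Your proposal is correct and follows essentially the same route as the paper: the same cycle (i) $\Rightarrow$ (ii) $\Rightarrow$ (iii) $\Rightarrow$ (iv) $\Rightarrow$ (i), with the key step (iv) $\Rightarrow$ (i) handled identically --- extracting $p=qp$ from the annihilator inclusion, upgrading it to $p=pq$ via the involution and self-adjointness of both projections, and combining with $q=pq$ obtained from $a=pa$. No gaps; the argument matches the paper's proof in substance and in detail.
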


\begin{proof}
(i) $\Rightarrow$ (ii) Given $p=awa_w^{\tiny{\textcircled{\#}}}$, then $pR=awa_w^{\tiny{\textcircled{\#}}}R\subseteq aR$. Also, $aR=(awa_w^{\tiny{\textcircled{\#}}}a)R=(pa)R\subseteq pR$.

(ii) $\Rightarrow$ (iii) is a tautology.

(iii) $\Rightarrow$ (iv) As $(1-p)p=0$ and ${^0}p={^0}a$, then $(1-p)a=0$ and hence $a=pa$.

(iv) $\Rightarrow$ (i) Note that $(1-awa_w^{\tiny{\textcircled{\#}}})a=0$. Then we get $(1-awa_w^{\tiny{\textcircled{\#}}})p=0$ and $p=awa_w^{\tiny{\textcircled{\#}}}p=(awa_w^{\tiny{\textcircled{\#}}}p)^*=pawa_w^{\tiny{\textcircled{\#}}}=awa_w^{\tiny{\textcircled{\#}}}$.
\hfill$\Box$
\end{proof}

\begin{theorem} \label{projection1} Let $a,b,w\in R$ with $a\in R_w^{\tiny{\textcircled{\#}}}$. Then the following conditions are equivalent{\rm :}

\emph{(i)} $a_w^{\tiny{\textcircled{\#}}}a=a_w^{\tiny{\textcircled{\#}}}b$.

\emph{(ii)} $w^{\parallel a}=w^{\parallel a}a^{(1,3)}b$.

\emph{(iii)} $a^*a=a^*b$.

\emph{(iv)} $a=awa_w^{\tiny{\textcircled{\#}}}b$.

\emph{(v)} There exists a projection $p\in R$ such that $aR=pR$ and $pa=pb$.

\emph{(vi)} There exists a projection $p\in R$ such that ${^0}p={^0}a$ and $pa=pb$.
\end{theorem}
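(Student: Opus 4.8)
The plan is to prove Theorem \ref{projection1} by establishing a cycle of implications together with a few short bridges, exploiting two facts proved earlier in the excerpt: the formula $a_w^{\tiny{\textcircled{\#}}}=w^{\parallel a}a^{(1,3)}$ from \cite{Zhu2020}, and the identities $a_w^{\tiny{\textcircled{\#}}}aw=w^{\parallel a}w$ and $awa_w^{\tiny{\textcircled{\#}}}=aa^{(1,3)}$ computed just above Lemma \ref{projection}. A natural route is (i) $\Leftrightarrow$ (ii), then (i) $\Leftrightarrow$ (iii), then (iii) $\Rightarrow$ (iv) $\Rightarrow$ (i), and finally (i) $\Leftrightarrow$ (v) $\Leftrightarrow$ (vi) using the projection $p=awa_w^{\tiny{\textcircled{\#}}}$ supplied by Lemma \ref{projection}.

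For (i) $\Leftrightarrow$ (ii), I would simply substitute $a_w^{\tiny{\textcircled{\#}}}=w^{\parallel a}a^{(1,3)}$ into both sides of $a_w^{\tiny{\textcircled{\#}}}a=a_w^{\tiny{\textcircled{\#}}}b$; the left-hand side collapses via $a^{(1,3)}a\cdot(\text{stuff})$ and $w^{\parallel a}$ being the inverse of $w$ along $a$, yielding $w^{\parallel a}=w^{\parallel a}a^{(1,3)}b$ on the right, which is exactly (ii). The equivalence (i) $\Leftrightarrow$ (iii) is the key analytic step and where the main work lies. Here I would use that $awa_w^{\tiny{\textcircled{\#}}}=aa^{(1,3)}$ is a projection (hence self-adjoint), so $a^*=a^*(awa_w^{\tiny{\textcircled{\#}}})^*=a^*awa_w^{\tiny{\textcircled{\#}}}$; multiplying the defining relation $axa=a$ and the Hermitian relation $(ax)^*=ax$ lets me pass between the condition $a^*a=a^*b$ and $a_w^{\tiny{\textcircled{\#}}}a=a_w^{\tiny{\textcircled{\#}}}b$. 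Concretely, from (iii) I would left-multiply by $a_w^{\tiny{\textcircled{\#}}}(a^\dagger)^*$-type factors, or more cleanly observe that $a_w^{\tiny{\textcircled{\#}}}a=a_w^{\tiny{\textcircled{\#}}}awa_w^{\tiny{\textcircled{\#}}}a$ and that $awa_w^{\tiny{\textcircled{\#}}}=aa^{(1,3)}$ depends on $a$ only through $a^*a$ (since a $\{1,3\}$-inverse is governed by the Hermitian projection onto the range), so that $a^*a=a^*b$ forces $a_w^{\tiny{\textcircled{\#}}}a=a_w^{\tiny{\textcircled{\#}}}b$.

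The remaining links are short. For (iii) $\Rightarrow$ (iv): since $awa_w^{\tiny{\textcircled{\#}}}=aa^{(1,3)}$ is a self-adjoint idempotent with $aa^{(1,3)}a=a$, I have $awa_w^{\tiny{\textcircled{\#}}}b=aa^{(1,3)}b$, and using $(aa^{(1,3)})^*=aa^{(1,3)}$ together with (iii) in the form $a^*a=a^*b$ gives $aa^{(1,3)}a=aa^{(1,3)}b$, i.e. $a=awa_w^{\tiny{\textcircled{\#}}}b$. For (iv) $\Rightarrow$ (i): left-multiply $a=awa_w^{\tiny{\textcircled{\#}}}b$ by $a_w^{\tiny{\textcircled{\#}}}$ and use $a_w^{\tiny{\textcircled{\#}}}awa_w^{\tiny{\textcircled{\#}}}=a_w^{\tiny{\textcircled{\#}}}$ (from \cite[Lemma 2.2]{Zhu2020}) to obtain $a_w^{\tiny{\textcircled{\#}}}a=a_w^{\tiny{\textcircled{\#}}}b$. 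Finally, for the projection conditions, I would take $p=awa_w^{\tiny{\textcircled{\#}}}$; by Lemma \ref{projection} this is a projection with $aR=pR$ and ${^0}a={^0}p$, and the condition $pa=pb$ is literally statement (iv) written as $awa_w^{\tiny{\textcircled{\#}}}a=awa_w^{\tiny{\textcircled{\#}}}b$ (with $awa_w^{\tiny{\textcircled{\#}}}a=a$), so (iv), (v), (vi) are immediate reformulations once Lemma \ref{projection} is invoked. The main obstacle I anticipate is handling the $\{1,3\}$-inverse cleanly in (i) $\Leftrightarrow$ (iii), since $a^{(1,3)}$ is not unique; I expect to route that step entirely through the canonical self-adjoint idempotent $aa^{(1,3)}=awa_w^{\tiny{\textcircled{\#}}}$, which \emph{is} uniquely determined, thereby sidestepping any ambiguity in the choice of inner inverse.
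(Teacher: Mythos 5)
Your proposal is correct and takes essentially the same route as the paper: the same key facts ($a_w^{\tiny{\textcircled{\#}}}=w^{\parallel a}a^{(1,3)}$ with $w^{\parallel a}\in Ra$, self-adjointness of the projection $awa_w^{\tiny{\textcircled{\#}}}=aa^{(1,3)}$, and Lemma \ref{projection} applied to $p=awa_w^{\tiny{\textcircled{\#}}}$) drive the same web of implications, differing from the paper's cycle (i)$\Rightarrow$(ii)$\Rightarrow$(iii)$\Rightarrow$(iv)$\Rightarrow$(v)$\Rightarrow$(vi)$\Rightarrow$(i) only in bookkeeping. One caution: your side remarks for proving (iii)$\Rightarrow$(i) directly --- the ``$a_w^{\tiny{\textcircled{\#}}}(a^\dagger)^*$-type factors'' (note $a^\dagger$ need not exist here, only $a\in R^{(1,3)}$) and the claim that $aa^{(1,3)}$ ``depends on $a$ only through $a^*a$'' --- are not rigorous as stated, but they are harmless since your (iii)$\Rightarrow$(iv)$\Rightarrow$(i) chain already closes the loop rigorously.
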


\begin{proof}

(i) $\Rightarrow$ (ii) Note that $a_w^{\tiny{\textcircled{\#}}}=w^{\parallel a}a^{(1,3)}$ and $w^{\parallel a}\in Ra$. Then, there exists some $x\in R$ such that $w^{\parallel a}=xa=xaa^{(1,3)}a=w^{\parallel a}a^{(1,3)}a=a_w^{\tiny{\textcircled{\#}}}a=a_w^{\tiny{\textcircled{\#}}}b=w^{\parallel a}a^{(1,3)}b$.

(ii) $\Rightarrow$ (iii) Given $w^{\parallel a}=w^{\parallel a}a^{(1,3)}b$, then $a^*a=a^*(aww^{\parallel a})=a^*aw(w^{\parallel a}a^{(1,3)}b)=a^*(aww^{\parallel a})a^{(1,3)}b=a^*aa^{(1,3)}b=(aa^{(1,3)}a)^*b=a^*b$.

(iii) $\Rightarrow$ (iv) Since $a=(awa_w^{\tiny{\textcircled{\#}}})^*a=(wa_w^{\tiny{\textcircled{\#}}})^*a^*a$, we have $a=(wa_w^{\tiny{\textcircled{\#}}})^*a^*b=(awa_w^{\tiny{\textcircled{\#}}})^*b=awa_w^{\tiny{\textcircled{\#}}}b$.

(iv) $\Rightarrow$ (v) Write $p=awa_w^{\tiny{\textcircled{\#}}}$, then $p^2=p=p^*$ and $pR=awa_w^{\tiny{\textcircled{\#}}}R\subseteq aR=awa_w^{\tiny{\textcircled{\#}}}aR\subseteq pR$. Also, $pa=a=awa_w^{\tiny{\textcircled{\#}}}b=pb$.

(v) $\Rightarrow$ (vi) is a tautology.

(vi) $\Rightarrow$ (i) It follows from Lemma \ref{projection} that $p=awa_w^{\tiny{\textcircled{\#}}}$. Then $a=pa=pb=awa_w^{\tiny{\textcircled{\#}}}b$ and consequently $a_w^{\tiny{\textcircled{\#}}}a=a_w^{\tiny{\textcircled{\#}}}awa_w^{\tiny{\textcircled{\#}}}b=a_w^{\tiny{\textcircled{\#}}}b$.
\hfill$\Box$
\end{proof}

Given any $a,w\in R$ and $a\in R_w^{\tiny{\textcircled{\#}}}$, we next present several characterizations for $a_w^{\tiny{\textcircled{\#}}}aw$ and $wa_w^{\tiny{\textcircled{\#}}}a$ in a ring, respectively.

\begin{lemma}  \label{idempotent} Let $a,w,e\in R$ with $a\in R_w^{\tiny{\textcircled{\#}}}$. Then the following conditions are equivalent{\rm :}

\emph{(i)} $e=a_w^{\tiny{\textcircled{\#}}}aw$.

\emph{(ii)} $Re=Raw$ and $ea=a$.

\emph{(iii)} $e^0=(aw)^0$ and $ea=a$.

\emph{(iv)} $(aw)^0\subseteq e^0$ and $ea=a$.
\end{lemma}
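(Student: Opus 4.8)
The plan is to prove the four conditions equivalent by the cyclic chain (i) $\Rightarrow$ (ii) $\Rightarrow$ (iii) $\Rightarrow$ (iv) $\Rightarrow$ (i), exactly mirroring the proof of Lemma~\ref{projection} but with left ideals and left annihilators in place of right ones, and with the idempotent $a_w^{\tiny{\textcircled{\#}}}aw$ playing the role of the projection $awa_w^{\tiny{\textcircled{\#}}}$. Throughout I would freely use the defining identities of the $w$-core inverse recorded in the excerpt, namely $a_w^{\tiny{\textcircled{\#}}}awa=a$, $awa_w^{\tiny{\textcircled{\#}}}a=a$ and $aw(a_w^{\tiny{\textcircled{\#}}})^2=a_w^{\tiny{\textcircled{\#}}}$, together with the fact that $a_w^{\tiny{\textcircled{\#}}}aw$ is idempotent.

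For (i) $\Rightarrow$ (ii), writing $e=a_w^{\tiny{\textcircled{\#}}}aw$ gives $Re\subseteq Raw$ at once, while $aw=awa_w^{\tiny{\textcircled{\#}}}aw=(aw)e$ shows the reverse inclusion $Raw\subseteq Re$, so $Re=Raw$; and $ea=a_w^{\tiny{\textcircled{\#}}}awa=a$. The implication (ii) $\Rightarrow$ (iii) is immediate from the observation quoted before Lemma~\ref{projection} that $Re=Raw$ forces $e^0=(aw)^0$, and (iii) $\Rightarrow$ (iv) is the trivial weakening of $e^0=(aw)^0$ to $(aw)^0\subseteq e^0$; both carry the equation $ea=a$ along unchanged.

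The substantive step is (iv) $\Rightarrow$ (i). Here I would first note that $aw(1-a_w^{\tiny{\textcircled{\#}}}aw)=aw-awa_w^{\tiny{\textcircled{\#}}}aw=0$, so $1-a_w^{\tiny{\textcircled{\#}}}aw\in(aw)^0\subseteq e^0$, which yields $e=ea_w^{\tiny{\textcircled{\#}}}aw$. To remove the leading $e$ I would establish the cancellation $ea_w^{\tiny{\textcircled{\#}}}=a_w^{\tiny{\textcircled{\#}}}$: using $ea=a$ (hence $eaw=aw$) and the identity $aw(a_w^{\tiny{\textcircled{\#}}})^2=a_w^{\tiny{\textcircled{\#}}}$,
\[
ea_w^{\tiny{\textcircled{\#}}}=e\,aw(a_w^{\tiny{\textcircled{\#}}})^2=(eaw)(a_w^{\tiny{\textcircled{\#}}})^2=aw(a_w^{\tiny{\textcircled{\#}}})^2=a_w^{\tiny{\textcircled{\#}}}.
\]
Substituting back gives $e=ea_w^{\tiny{\textcircled{\#}}}aw=(ea_w^{\tiny{\textcircled{\#}}})aw=a_w^{\tiny{\textcircled{\#}}}aw$, as required.

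I expect the only real obstacle to be this closing implication: the weak hypothesis $(aw)^0\subseteq e^0$ delivers merely $e=ea_w^{\tiny{\textcircled{\#}}}aw$, and the crux is recognizing that the cancellation $ea_w^{\tiny{\textcircled{\#}}}=a_w^{\tiny{\textcircled{\#}}}$ follows by routing $a_w^{\tiny{\textcircled{\#}}}$ through the identity $aw(a_w^{\tiny{\textcircled{\#}}})^2=a_w^{\tiny{\textcircled{\#}}}$, so that the hypothesis $ea=a$ can be brought to bear in the form $eaw=aw$. The remaining implications are formal, being the left-handed duals of those in Lemma~\ref{projection}.
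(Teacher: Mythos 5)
Your proof is correct and follows essentially the same route as the paper: the easy implications are handled identically, and your key step for (iv) $\Rightarrow$ (i) — using $(aw)^0\subseteq e^0$ with $aw(1-a_w^{\tiny{\textcircled{\#}}}aw)=0$ to get $e=ea_w^{\tiny{\textcircled{\#}}}aw$, then cancelling via $ea_w^{\tiny{\textcircled{\#}}}=eaw(a_w^{\tiny{\textcircled{\#}}})^2=aw(a_w^{\tiny{\textcircled{\#}}})^2=a_w^{\tiny{\textcircled{\#}}}$ — is exactly the paper's argument, merely stated in the opposite order.
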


\begin{proof} (i) $\Rightarrow$ (ii) Since $e=a_w^{\tiny{\textcircled{\#}}}aw$, we have $ea=a_w^{\tiny{\textcircled{\#}}}awa=a$ and $Re\subseteq Raw=Rawa_w^{\tiny{\textcircled{\#}}}aw=Rawe \subseteq Re$.

(ii) $\Rightarrow$ (iii) and (iii) $\Rightarrow$ (iv) are clear.

(iv) $\Rightarrow$ (i) Note that $ea=a$ concludes $ea_w^{\tiny{\textcircled{\#}}}=a_w^{\tiny{\textcircled{\#}}}$ by $aw(a_w^{\tiny{\textcircled{\#}}})^2=a_w^{\tiny{\textcircled{\#}}}$. Also, from $aw(1-a_w^{\tiny{\textcircled{\#}}}aw)=0$ and $(aw)^0\subseteq e^0$, it follows that $e=ea_w^{\tiny{\textcircled{\#}}}aw=a_w^{\tiny{\textcircled{\#}}}aw$.
\hfill$\Box$
\end{proof}

A similar characterization for the idempotent $wa_w^{\tiny{\textcircled{\#}}}a$ can also be obtained, whose proof is omitted.

\begin{lemma}  \label{idempotent0} Let $a,w,f\in R$ with $a\in R_w^{\tiny{\textcircled{\#}}}$. Then the following conditions are equivalent{\rm :}

\emph{(i)} $f=wa_w^{\tiny{\textcircled{\#}}}a$.

\emph{(ii)} $Ra=Rf$ and $fwa=wa$.

\emph{(iii)} $a^0=f^0$ and $fwa=wa$.

\emph{(iv)} $a^0\subseteq f^0$ and $fwa=wa$.
\end{lemma}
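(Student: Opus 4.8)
The plan is to prove the cyclic chain (i) $\Rightarrow$ (ii) $\Rightarrow$ (iii) $\Rightarrow$ (iv) $\Rightarrow$ (i), mirroring the proof of Lemma \ref{idempotent} under the left--right duality that interchanges the two factors flanking $a_w^{\tiny{\textcircled{\#}}}$. Throughout I will use the identities $awa_w^{\tiny{\textcircled{\#}}}a=a$, $a_w^{\tiny{\textcircled{\#}}}awa=a$ and $aw(a_w^{\tiny{\textcircled{\#}}})^2=a_w^{\tiny{\textcircled{\#}}}$ recorded in \cite{Zhu2020}, together with the annihilator fact quoted before Lemma \ref{projection} that $Ra=Rf$ forces $a^0=f^0$. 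The implications (ii) $\Rightarrow$ (iii) and (iii) $\Rightarrow$ (iv) are then immediate: the first is exactly this annihilator fact, and the second merely weakens an equality to an inclusion.

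For (i) $\Rightarrow$ (ii), assume $f=wa_w^{\tiny{\textcircled{\#}}}a$. Then $fwa=wa_w^{\tiny{\textcircled{\#}}}awa=wa$ by $a_w^{\tiny{\textcircled{\#}}}awa=a$, giving the second clause. For $Ra=Rf$ I prove two inclusions: $Rf=Rwa_w^{\tiny{\textcircled{\#}}}a\subseteq Ra$ is clear, while $af=awa_w^{\tiny{\textcircled{\#}}}a=a$ (using $awa_w^{\tiny{\textcircled{\#}}}a=a$) shows $a=af\in Rf$, whence $Ra\subseteq Rf$. This is the dual of the chain $Re\subseteq Raw\subseteq Re$ used for Lemma \ref{idempotent}, with the roles of $aw$ and $a$ interchanged.

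The substantive step is (iv) $\Rightarrow$ (i), where only $a^0\subseteq f^0$ and $fwa=wa$ are available; I assemble two ingredients. First, multiplying $aw(a_w^{\tiny{\textcircled{\#}}})^2=a_w^{\tiny{\textcircled{\#}}}$ on the left by $w$ gives $wa_w^{\tiny{\textcircled{\#}}}=waw(a_w^{\tiny{\textcircled{\#}}})^2$, in which the block $wa$ now appears explicitly; applying $fwa=wa$ then yields $fwa_w^{\tiny{\textcircled{\#}}}=f(wa)w(a_w^{\tiny{\textcircled{\#}}})^2=(wa)w(a_w^{\tiny{\textcircled{\#}}})^2=wa_w^{\tiny{\textcircled{\#}}}$. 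Second, since $a(1-wa_w^{\tiny{\textcircled{\#}}}a)=a-awa_w^{\tiny{\textcircled{\#}}}a=0$, the element $1-wa_w^{\tiny{\textcircled{\#}}}a$ lies in $a^0\subseteq f^0$, so $f(1-wa_w^{\tiny{\textcircled{\#}}}a)=0$, that is $f=fwa_w^{\tiny{\textcircled{\#}}}a$. Combining the two, $f=(fwa_w^{\tiny{\textcircled{\#}}})a=wa_w^{\tiny{\textcircled{\#}}}a$, as required.

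The one place where care is needed --- and where the argument is not a verbatim transcription of Lemma \ref{idempotent} --- is the first ingredient above. In the $e=a_w^{\tiny{\textcircled{\#}}}aw$ case the hypothesis $ea=a$ acts directly on $a_w^{\tiny{\textcircled{\#}}}=aw(a_w^{\tiny{\textcircled{\#}}})^2$, but here the hypothesis $fwa=wa$ carries an extra $w$ on the left, so one must first left-multiply $aw(a_w^{\tiny{\textcircled{\#}}})^2=a_w^{\tiny{\textcircled{\#}}}$ by $w$ to expose the factor $wa$ before the hypothesis can be invoked. Getting this bracketing right is the only genuine obstacle; everything else is the formal dual of the preceding lemma.
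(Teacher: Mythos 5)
Your proof is correct and is exactly the argument the paper intends: the paper omits the proof of this lemma, describing it as the analogue of Lemma \ref{idempotent}, and your cyclic chain is precisely that dualization (with the correct handling of the extra $w$ in $fwa=wa$). Indeed, your key computation $fwa_w^{\tiny{\textcircled{\#}}}=(fwa)w(a_w^{\tiny{\textcircled{\#}}})^2=wa_w^{\tiny{\textcircled{\#}}}$ is the same one the paper itself performs later in the proof of Theorem \ref{idempotent1}, (xi) $\Rightarrow$ (i).
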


The characterization for $awa_w^{\tiny{\textcircled{\#}}}=bwa_w^{\tiny{\textcircled{\#}}}$ can also be derived similarly. First, a preliminary lemma is given.

\begin{lemma} \label{group result} {\rm \cite[Theorem 2.1]{Mary2013}} Let $a,w\in R$. Then the following conditions are equivalent{\rm:}

\emph{(i)} $w\in R^{\parallel a}$.

\emph{(ii)} $a\in awR$ and $aw\in R^\#$.

\emph{(iii)} $a\in Rwa$ and $wa\in R^\#$.

In this case, $w^{\parallel a}=a(wa)^\#=(aw)^\# a$.
\end{lemma}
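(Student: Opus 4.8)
This is the characterization of invertibility along an element in terms of group invertibility, cited as \cite[Theorem 2.1]{Mary2013}. The plan is to prove the cyclic chain of equivalences (i) $\Rightarrow$ (ii) $\Rightarrow$ (iii) $\Rightarrow$ (i), then exhibit the claimed formulas for $w^{\parallel a}$.

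First I would unpack (i). By definition $w \in R^{\parallel a}$ means there exists $b = w^{\parallel a}$ with $baw = a = awb$ and $b \in aR \cap Ra$. From $a = awb$ with $b \in aR$ I immediately get $a \in awR$, which is the first half of (ii). The harder half is to show $aw \in R^\#$, i.e., that $aw$ is group invertible. Here the key idea is to write $b = ar$ for some $r \in R$ (using $b \in aR$) and propose a candidate group inverse for $aw$. The natural guess, motivated by the final formula, is to set $x = (aw)^\#$ and check the three group-inverse axioms $awx = xaw$, $awxaw = aw$, $xawx = x$ using the defining relations $baw = awb = a$ together with the one-sided containments $b \in aR \cap Ra$. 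I expect the candidate to be built from $b$ and $w$; concretely, since $w^{\parallel a} = a(wa)^\#$ is claimed, one verifies $(aw)^\# = a(wa)^\# w^{\,?}$ or extracts it from $b$ directly. Establishing the group-inverse axioms from the one-sided-ideal membership is the main obstacle, since $b \in aR \cap Ra$ must be converted into the commutation $awx = xaw$.

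For (ii) $\Rightarrow$ (iii), I would use the involution-free symmetry of the two statements: (ii) involves $aw$ and right multiplication, (iii) involves $wa$ and left multiplication, and the two are related by the observation that $aw \in R^\#$ together with $a \in awR$ lets one transfer the group invertibility to $wa$ via the standard fact that $aw$ and $wa$ are simultaneously group invertible with $(wa)^\# = w\bigl((aw)^\#\bigr)^2 a$ or a similar Jacobson-type identity. The condition $a \in awR$ should translate into $a \in Rwa$ by a parallel computation. For (iii) $\Rightarrow$ (i), I would reverse the first step: from $a \in Rwa$ and $wa \in R^\#$, construct the candidate inverse along $a$ explicitly as $b = (aw)^\# a = a (wa)^\#$ and verify $baw = a = awb$ together with $b \in aR \cap Ra$, the last two memberships following from the formula itself since $a(wa)^\# \in aR$ and $(aw)^\# a \in Ra$.

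Finally, once the equivalences hold, the formula $w^{\parallel a} = a(wa)^\# = (aw)^\# a$ drops out of the construction in the last implication, and one checks consistency of the two expressions using the identity $aw \cdot a(wa)^\# = a \cdot wa(wa)^\# = a(wa)^\#(wa)\cdots$, i.e., the interchange $a(wa)^\#$ and $(aw)^\# a$ agree because both equal the unique inverse of $a$ along $w$ (uniqueness having been recalled in the introduction). The genuinely delicate point throughout is the passage between one-sided ideal membership and the group-inverse (two-sided commuting) conditions; everything else is bookkeeping with the defining equations $baw = awb = a$.
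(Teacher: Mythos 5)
First, a point of reference: the paper does not prove this lemma at all --- it is quoted verbatim from \cite[Theorem 2.1]{Mary2013}, so there is no in-paper argument to compare against, and your proposal has to stand on its own. As written it does not, for two concrete reasons. The first is the step you yourself call ``the main obstacle'', (i) $\Rightarrow$ (ii): you never produce a candidate group inverse for $aw$, and the guess you do write down (``set $x=(aw)^\#$ and check the three axioms'') is circular, since the existence of $(aw)^\#$ is exactly what is to be proved. The missing idea is to avoid exhibiting the group inverse altogether and instead use the standard criterion $x\in R^\#\Leftrightarrow x\in x^2R\cap Rx^2$ (this is also the mechanism behind \cite[Theorem 2.2]{Mary2013}, quoted in the paper's introduction). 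Concretely, write $b=w^{\parallel a}$ as $b=ap=qa$ with $p,q\in R$ (this is where $b\in aR\cap Ra$ enters). From $a=bwa$ and $bw=qaw$ one gets $aw=bwaw=q(aw)^2\in R(aw)^2$; from $a=awb$ and $b=ap=(awb)p=aw(bp)$ one gets $aw=awbw=(aw)^2(bpw)\in (aw)^2R$. Hence $aw\in R^\#$. Without a computation of this kind the implication is simply not established.

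Second, your route for (ii) $\Rightarrow$ (iii) leans on ``the standard fact that $aw$ and $wa$ are simultaneously group invertible''. There is no such fact: in $M_2(\mathbb{C})$ with matrix units $E_{ij}$, take $a=E_{11}$, $w=E_{12}$; then $wa=0$ is group invertible while $aw=E_{12}$ is nilpotent and is not. (What is true in general is Cline's formula for Drazin inverses, which only bounds the index of $wa$ by $2$.) The transfer does work here, but only because of the hypothesis $a\in awR$: writing $x=(aw)^\#$ and $y=wx^2a$, one checks $ywa=way=wxa$, then $(wa)y(wa)=w(awxa)=wa$ --- this is precisely where $a\in awR$ is used, via $awxa=awx(awr)=awr=a$ --- then $y(wa)y=y$, and finally $a=aw(xa)\in Rwa$ since $xa=x^2awa$. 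That verification, not an appeal to a general simultaneity principle, is the content of the step; alternatively you can bypass it entirely by proving (ii) $\Rightarrow$ (i) directly with $b=(aw)^\#a$ and then (i) $\Rightarrow$ (iii) by the mirror image of (i) $\Rightarrow$ (ii). Two smaller slips to repair: the defining equations are $bwa=a=awb$, not $baw=a=awb$ (the object is the inverse of $w$ along $a$, not of $a$ along $w$); and in (iii) $\Rightarrow$ (i) you may only use the candidate $b=a(wa)^\#$, since $(aw)^\#$ is not yet known to exist at that point --- the membership $b\in Ra$ then needs the identity $a(wa)^\#=a\bigl((wa)^\#\bigr)^2wa$, and the equalities $bwa=a=awb$ need the hypothesis $a\in Rwa$, neither of which follows ``from the formula itself''. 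Your construction in (iii) $\Rightarrow$ (i) and the uniqueness argument identifying $a(wa)^\#$ with $(aw)^\#a$ are, on the other hand, exactly the right ideas.
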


\begin{theorem} \label{idempotent1} Let $a,b,w\in R$ with $a\in R_w^{\tiny{\textcircled{\#}}}$. Then the following conditions are equivalent{\rm :}

\emph{(i)} $awa_w^{\tiny{\textcircled{\#}}}=bwa_w^{\tiny{\textcircled{\#}}}$.

\emph{(ii)} $a=bww^{\parallel a}$.

\emph{(iii)} $awa=bwa$.

\emph{(iv)} $a(wa)^\#=b(wa)^\#$.

\emph{(v)} $a=bwa_w^{\tiny{\textcircled{\#}}}a$.

\emph{(vi)} There exists some $e\in R$ such that $Re=Raw$, $ea=a$ and $awe=bwe$.

\emph{(vii)} There exists some $e\in R$ such that $e^0=(aw)^0$, $ea=a$ and $awe=bwe$.

\emph{(viii)} There exists some $e\in R$ such that $(aw)^0 \subseteq e^0$, $ea=a$ and $awe=bwe$.

\emph{(ix)} There exists some $f\in R$ such that $Ra=Rf$, $af=bf$ and $fwa=wa$.

\emph{(x)} There exists some $f\in R$ such that $a^0=f^0$, $af=bf$ and $fwa=wa$.

\emph{(xi)} There exists some $f\in R$ such that $a^0\subseteq f^0$, $af=bf$ and $fwa=wa$.
\end{theorem}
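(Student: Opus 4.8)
The plan is to split the eleven conditions into three blocks and to route everything through the five plain equations (i)--(v). First I would prove that (i)--(v) are mutually equivalent by direct manipulation, leaning on the identities recorded just before the statement: $awa_w^{\tiny{\textcircled{\#}}}a=a$, $aw(a_w^{\tiny{\textcircled{\#}}})^2=a_w^{\tiny{\textcircled{\#}}}$, $a_w^{\tiny{\textcircled{\#}}}awa_w^{\tiny{\textcircled{\#}}}=a_w^{\tiny{\textcircled{\#}}}$, $a_w^{\tiny{\textcircled{\#}}}aw=w^{\parallel a}w$, $wa_w^{\tiny{\textcircled{\#}}}a=ww^{\parallel a}$, together with $w^{\parallel a}wa=a$ and the formulas $w^{\parallel a}=a(wa)^\#=(aw)^\#a$ from Lemma \ref{group result}. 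For (i) $\Leftrightarrow$ (v) I would multiply $awa_w^{\tiny{\textcircled{\#}}}=bwa_w^{\tiny{\textcircled{\#}}}$ on the right by $a$ to reach $a=bwa_w^{\tiny{\textcircled{\#}}}a$, and conversely absorb a factor through $a_w^{\tiny{\textcircled{\#}}}awa_w^{\tiny{\textcircled{\#}}}=a_w^{\tiny{\textcircled{\#}}}$; condition (ii) is then literally (v) after substituting $ww^{\parallel a}=wa_w^{\tiny{\textcircled{\#}}}a$. For (i) $\Leftrightarrow$ (iii) I would pass between $awa_w^{\tiny{\textcircled{\#}}}$ and $awa$ by right-multiplying by $awa$ (using $a_w^{\tiny{\textcircled{\#}}}awa=a$) one way and by $w(a_w^{\tiny{\textcircled{\#}}})^2$ the other; and (iii) $\Leftrightarrow$ (iv) follows by multiplying $awa=bwa$ on the right by $((wa)^\#)^2$ and back by $(wa)^2$, using that $(wa)^\#$ is a group inverse.

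Second, I would attach the ``$e$-block'' (vi)--(viii) to the core through (i) and (iii). The key observation is that the hypotheses imposed on $e$ in (vi), (vii), (viii) are precisely the three equivalent descriptions of $a_w^{\tiny{\textcircled{\#}}}aw$ furnished by Lemma \ref{idempotent}, so in each case $e=a_w^{\tiny{\textcircled{\#}}}aw$ is forced. Since $awe=awa_w^{\tiny{\textcircled{\#}}}aw=aw$, the remaining hypothesis $awe=bwe$ collapses to $aw=bwa_w^{\tiny{\textcircled{\#}}}aw$; right-multiplying by $a$ and using $a_w^{\tiny{\textcircled{\#}}}awa=a$ gives $awa=bwa$, i.e.\ (iii). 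Conversely, from (i) I would take $e=a_w^{\tiny{\textcircled{\#}}}aw$, read off $Re=Raw$ and $ea=a$ from Lemma \ref{idempotent}, and compute $bwe=bwa_w^{\tiny{\textcircled{\#}}}aw=awa_w^{\tiny{\textcircled{\#}}}aw=aw=awe$, establishing (vi). The descents (vi) $\Rightarrow$ (vii) $\Rightarrow$ (viii) are immediate from the annihilator facts, since $Re=Raw$ forces $e^0=(aw)^0$, which in turn gives $(aw)^0\subseteq e^0$.

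Third, the ``$f$-block'' (ix)--(xi) is handled symmetrically through (v), using Lemma \ref{idempotent0} in place of Lemma \ref{idempotent}: the side conditions on $f$ force $f=wa_w^{\tiny{\textcircled{\#}}}a$, whence $af=awa_w^{\tiny{\textcircled{\#}}}a=a$ turns the hypothesis $af=bf$ into $a=bwa_w^{\tiny{\textcircled{\#}}}a$, which is exactly (v). For the converse I would take $f=wa_w^{\tiny{\textcircled{\#}}}a$, obtain $Ra=Rf$ and $fwa=wa$ from Lemma \ref{idempotent0}, and note $bf=bwa_w^{\tiny{\textcircled{\#}}}a=a=af$ by (v); the descents (ix) $\Rightarrow$ (x) $\Rightarrow$ (xi) again use only the facts that $Ra=Rf$ forces $a^0=f^0$ and hence $a^0\subseteq f^0$.

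The genuine content sits entirely in the first block; the two annihilator blocks become routine once one recognizes that the auxiliary lemmas pin $e$ and $f$ down to the canonical idempotents $a_w^{\tiny{\textcircled{\#}}}aw$ and $wa_w^{\tiny{\textcircled{\#}}}a$. The step I expect to be most delicate is the bookkeeping within (i)--(v), where each implication must route through exactly the right defining identity of $a_w^{\tiny{\textcircled{\#}}}$ and $w^{\parallel a}$; in particular one must keep $a_w^{\tiny{\textcircled{\#}}}awa=a$ distinct from the one-sided relations $a_w^{\tiny{\textcircled{\#}}}aw=w^{\parallel a}w$ and $wa_w^{\tiny{\textcircled{\#}}}a=ww^{\parallel a}$, and the passage (iii) $\Leftrightarrow$ (iv) relies on $(wa)^\#$ being a genuine group inverse so that both $wa\,((wa)^\#)^2=(wa)^\#$ and $(wa)^\#(wa)^2=wa$ are available.
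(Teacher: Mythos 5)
Your proposal is correct and follows essentially the same route as the paper: both establish the equivalence of (i)--(v) via the defining identities of $a_w^{\tiny{\textcircled{\#}}}$ and $w^{\parallel a}$ (including the passage (iii) $\Leftrightarrow$ (iv) through the group inverse $(wa)^\#$), and both handle (vi)--(viii) and (ix)--(xi) by using Lemmas \ref{idempotent} and \ref{idempotent0} to force $e=a_w^{\tiny{\textcircled{\#}}}aw$ and $f=wa_w^{\tiny{\textcircled{\#}}}a$. The only difference is organizational (your hub-and-spoke layout around (i)--(v) versus the paper's two implication cycles), which does not change the substance of the argument.
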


\begin{proof}
(i) $\Rightarrow$ (ii) Given $awa_w^{\tiny{\textcircled{\#}}}=bwa_w^{\tiny{\textcircled{\#}}}$, then $a=awa_w^{\tiny{\textcircled{\#}}}a=bwa_w^{\tiny{\textcircled{\#}}}a=bww^{\parallel a}a^{(1,3)}a=bww^{\parallel a}$.

(ii) $\Rightarrow$ (iii) As $a=bww^{\parallel a}$, then $awa=(bww^{\parallel a})wa=bw(w^{\parallel a}wa)=bwa$.

(iii) $\Rightarrow$ (iv) Note that $a\in R_w^{\tiny{\textcircled{\#}}}$ implies $wa\in R^\#$ by Lemma \ref{group result} (i) $\Rightarrow$ (iii). Post-multiplying $awa=bwa$ by $((wa)^\#)^2$ gives $a(wa)^\#=b(wa)^\#$.

(iv) $\Rightarrow$ (v) As $a\in R_w^{\tiny{\textcircled{\#}}}$, then $w^{\parallel a}=a(wa)^\#$ in terms of Lemma \ref{group result}. Note that $w^{\parallel a}\in aR \cap Ra$. Then $a_w^{\tiny{\textcircled{\#}}}a=w^{\parallel a}a^{(1,3)}a=w^{\parallel a}$. Thus, $a=w^{\parallel a}wa=a(wa)^\#wa=b(wa)^\#wa=bwa(wa)^\#=bww^{\parallel a}=bwa_w^{\tiny{\textcircled{\#}}}a$.

(v) $\Rightarrow$ (vi) Write $e=a_w^{\tiny{\textcircled{\#}}}aw$, then $Re\subseteq Raw=R(awa_w^{\tiny{\textcircled{\#}}}a)w=Raw(a_w^{\tiny{\textcircled{\#}}}aw)=Rawe\subseteq Re$ and $ea=a_w^{\tiny{\textcircled{\#}}}awa=a$. Also, $awe=awa_w^{\tiny{\textcircled{\#}}}aw=aw=bwa_w^{\tiny{\textcircled{\#}}}aw=bwe$.

(vi) $\Rightarrow$ (vii) and (vii) $\Rightarrow$ (viii) are obvious.

(viii) $\Rightarrow$ (i) It follow from Lemma \ref{idempotent} that $e=a_w^{\tiny{\textcircled{\#}}}aw$ and $awa_w^{\tiny{\textcircled{\#}}}=awa_w^{\tiny{\textcircled{\#}}}awa_w^{\tiny{\textcircled{\#}}}
=awea_w^{\tiny{\textcircled{\#}}}=bwea_w^{\tiny{\textcircled{\#}}}=bwa_w^{\tiny{\textcircled{\#}}}$.

(i) $\Rightarrow$ (ix) Write $f=wa_w^{\tiny{\textcircled{\#}}}a$, then $Rf\subseteq Ra=Raf\subseteq Rf$, $af=awa_w^{\tiny{\textcircled{\#}}}a=bwa_w^{\tiny{\textcircled{\#}}}a=bf$ and $fwa=(wa_w^{\tiny{\textcircled{\#}}}a)wa=w(a_w^{\tiny{\textcircled{\#}}}awa)=wa$.

(ix) $\Rightarrow$ (x) and (x) $\Rightarrow$ (xi) are clear.

(xi) $\Rightarrow$ (i) Given $a^0\subseteq f^0$ and $fwa=wa$, then, by Lemma \ref{idempotent0}, $f=wa_w^{\tiny{\textcircled{\#}}}a$ and  $fwa_w^{\tiny{\textcircled{\#}}}=fw(aw(a_w^{\tiny{\textcircled{\#}}})^2)=(fwa)w(a_w^{\tiny{\textcircled{\#}}})^2=(wa)w(a_w^{\tiny{\textcircled{\#}}})^2
=w(aw(a_w^{\tiny{\textcircled{\#}}})^2)
=wa_w^{\tiny{\textcircled{\#}}}$. Thus, $awa_w^{\tiny{\textcircled{\#}}}=a(fwa_w^{\tiny{\textcircled{\#}}})=(af)wa_w^{\tiny{\textcircled{\#}}}=(bf)wa_w^{\tiny{\textcircled{\#}}}=b(fwa_w^{\tiny{\textcircled{\#}}})=bwa_w^{\tiny{\textcircled{\#}}}$.
\hfill$\Box$
\end{proof}

Combining with Theorems \ref{projection1} and \ref{idempotent1}, we get several characterizations for the $w$-core partial order $a\overset{\tiny{\textcircled{\#}}}\leq_w b$.

\begin{theorem} \label{char w core} Let $a,b,w\in R$ with $a\in R_w^{\tiny{\textcircled{\#}}}$. Then the following conditions are equivalent{\rm :}

\emph{(i)} $a\overset{\tiny{\textcircled{\#}}}\leq_w b$.

\emph{(ii)} $w^{\parallel a}=w^{\parallel a}a^{(1,3)}b$ and $a=bww^{\parallel a}$.

\emph{(iii)} $a^*a=a^*b$ and $bwa=awa$.

\emph{(iv)} $a^*a=a^*b$ and $b(wa)^\#=a(wa)^\#$.

\emph{(v)} $a=awa_w^{\tiny{\textcircled{\#}}}b=bwa_w^{\tiny{\textcircled{\#}}}a$.

\emph{(vi)}  There exist a projection $p\in R$ and an element $e\in R$ such that $aR=pR$, $pa=pb$, $Re=Raw$, $ea=a$ and $awe=bwe$.

\emph{(vii)}  There exist a projection $p\in R$ and an element $e\in R$ such that ${^0}p={^0}a$, $pa=pb$, $e^0=(aw)^0$, $ea=a$ and $awe=bwe$.

\emph{(viii)}  There exist a projection $p\in R$ and an element $f\in R$ such that $aR=pR$, $pa=pb$, $Ra=Rf$, $af=bf$ and $fwa=wa$.

\emph{(ix)}  There exist a projection $p\in R$ and an element $f\in R$ such that ${^0}p={^0}a$, $pa=pb$, $a^0=f^0$, $af=bf$ and $fwa=wa$.

\emph{(x)} There exist a projection $p\in R$ and an element $f\in R$ such that ${^0}p={^0}a$, $pa=pb$, $a^0\subseteq f^0$, $af=bf$ and $fwa=wa$.

\emph{(xi)} There exist a projection $p\in R$ and an element $e\in R$ such that $pb=a=ea$ and $bwe=aw$.

\emph{(xii)} There exist a projection $p\in R$ and an element $e\in R$ such that $pb=a=ea$ and $bwe=awe$.

\end{theorem}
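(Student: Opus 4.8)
The plan is to exploit the fact that the defining relation $a\overset{\tiny{\textcircled{\#}}}\leq_w b$ is precisely the conjunction of the two conditions $a_w^{\tiny{\textcircled{\#}}}a=a_w^{\tiny{\textcircled{\#}}}b$ (call it Condition A) and $awa_w^{\tiny{\textcircled{\#}}}=bwa_w^{\tiny{\textcircled{\#}}}$ (call it Condition B), each of which has already been fully characterized. Theorem \ref{projection1} lists six formulations equivalent to Condition A, and Theorem \ref{idempotent1} lists eleven formulations equivalent to Condition B. Consequently, any statement in the list that is literally the conjunction of an A-equivalent and a B-equivalent is automatically equivalent to (i), with nothing to prove beyond citing the two theorems.

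First I would dispatch items (ii)--(x) by this observation. For example, (iii) pairs $a^*a=a^*b$ (Theorem \ref{projection1}(iii)) with $bwa=awa$ (Theorem \ref{idempotent1}(iii)); (v) pairs $a=awa_w^{\tiny{\textcircled{\#}}}b$ (Theorem \ref{projection1}(iv)) with $a=bwa_w^{\tiny{\textcircled{\#}}}a$ (Theorem \ref{idempotent1}(v)); and (vi)--(x) pair the projection characterizations of Condition A (Theorem \ref{projection1}(v),(vi)) with the one-sided-ideal and annihilator characterizations of Condition B (Theorem \ref{idempotent1}(vi),(vii),(ix),(x),(xi)). In the forward direction one supplies the canonical witnesses $p=awa_w^{\tiny{\textcircled{\#}}}$, $e=a_w^{\tiny{\textcircled{\#}}}aw$ and $f=wa_w^{\tiny{\textcircled{\#}}}a$, which simultaneously satisfy every listed equality; in the reverse direction the A-clause yields Condition A and the B-clause yields Condition B, hence (i). No computation is required here.

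The only items needing a genuine (if short) argument are (xi) and (xii), where the A- and B-data are intertwined across the witnesses $p$ and $e$. For (i)$\Rightarrow$(xi)/(xii) I would again take $p=awa_w^{\tiny{\textcircled{\#}}}$ (a projection by the remark preceding Lemma \ref{projection}) and $e=a_w^{\tiny{\textcircled{\#}}}aw$: then $pb=awa_w^{\tiny{\textcircled{\#}}}b=a$ by Theorem \ref{projection1}(iv), $ea=a_w^{\tiny{\textcircled{\#}}}awa=a$ by the defining identity of the $w$-core inverse, and using Condition B in the form $bwa_w^{\tiny{\textcircled{\#}}}=awa_w^{\tiny{\textcircled{\#}}}$ one computes $bwe=bwa_w^{\tiny{\textcircled{\#}}}aw=awa_w^{\tiny{\textcircled{\#}}}aw=aw=awe$, covering both clauses at once. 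For the converse the two key identities are: since $p$ is a projection, $a=pb$ gives $a^*=b^*p$, whence $a^*a=b^*p^2b=b^*pb=a^*b$, which is Condition A (Theorem \ref{projection1}(iii)); and right-multiplying $bwe=aw$ (resp. $bwe=awe$) by $a$ and using $ea=a$ yields $bwa=bwea=awa$ (resp. $bwa=bwea=awea=awa$), which is Condition B (Theorem \ref{idempotent1}(iii)). Together these give (i).

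There is no deep obstacle; the proof is essentially a bookkeeping exercise of matching each clause to the correct equivalent condition in Theorems \ref{projection1} and \ref{idempotent1}. The one spot demanding slight care is (xi)/(xii): one must choose $p$ and $e$ correctly in the forward direction, and in the reverse direction notice the clean extraction $a^*a=b^*p^2b=a^*b$ of Condition A from $a=pb$, together with the annihilator cancellation $bwa=bwea=awa$ that extracts Condition B.
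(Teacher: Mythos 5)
Your proposal is correct and takes essentially the same route as the paper: the paper likewise disposes of (i)--(x) in one line by citing Theorems \ref{projection1} and \ref{idempotent1}, and treats (xi)/(xii) with the very same witnesses $p=awa_w^{\tiny{\textcircled{\#}}}$, $e=a_w^{\tiny{\textcircled{\#}}}aw$ (giving $bwe=aw=awe$) and the same converse extraction of $a^*a=a^*b$ from $a=pb$ and of $bwa=bwea=awa$ from $ea=a$, concluding via (iii) $\Rightarrow$ (i). The only cosmetic difference is that the paper chains (i) $\Rightarrow$ (xi) $\Rightarrow$ (xii) $\Rightarrow$ (i), declaring (xi) $\Rightarrow$ (xii) obvious, whereas you prove both converses directly, which is if anything slightly tidier.
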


\begin{proof}

(i)-(x) are equivalent by Theorems \ref{projection1} and \ref{idempotent1}. It next suffices to prove (i) $\Leftrightarrow$ (xi) $\Leftrightarrow$ (xii).

(i) $\Rightarrow$ (xi) Set $p=awa_w^{\tiny{\textcircled{\#}}} $ and $e=a_w^{\tiny{\textcircled{\#}}} aw$, then $p=p^2=p^*$. It is obvious that $ea=a_w^{\tiny{\textcircled{\#}}} awa=a=awa_w^{\tiny{\textcircled{\#}}}a=awa_w^{\tiny{\textcircled{\#}}}b=pb$. Also, $bwe=bwa_w^{\tiny{\textcircled{\#}}} aw=awa_w^{\tiny{\textcircled{\#}}} aw=aw$.

(xi) $\Rightarrow$ (xii) is obvious.

(xii) $\Rightarrow$ (i) It follows from $a=pb$ that $pa=ppb=pb=a$ and hence $a^*b=(pa)^*b=a^*pb=a^*a$. Also, $bwa=bw(ea)=(bwe)a=(awe)a=aw(ea)=awa$. So, $a\overset{\tiny{\textcircled{\#}}}\leq_w b$ by (iii) $\Rightarrow$ (i).
\hfill$\Box$
\end{proof}

Note the fact that $1^{\parallel a}=aa^\#$ provided that $a\in R^\#$. Set $w=1$ in Theorem \ref{char w core}, then the condition (ii) can be reduced to $aa^\#=aa^\#a^{(1,3)}b$ and $a=ba^\#a$, which are equivalent to $a=aa^{(1,3)}b=ba^\#a$. We hence get several characterizations for the core partial order, some of which were given in \cite[Theorems 2.4 and 2.6]{Rakic2015}.

\begin{corollary} \label{char core} Let $a,b\in R$ with $a\in R^{\tiny{\textcircled{\#}}}$. Then the following conditions are equivalent{\rm :}

\emph{(i)} $a\overset{\tiny{\textcircled{\#}}}\leq b$.

\emph{(ii)} $a=aa^{(1,3)}b=ba^\#a$.

\emph{(iii)} $a^*a=a^*b$ and $ba=a^2$.

\emph{(iv)} $a^*a=a^*b$ and $ba^\#=aa^\#$.

\emph{(v)} $a=aa^{\tiny{\textcircled{\#}}}b=ba^{\tiny{\textcircled{\#}}}a$.

\emph{(vi)}  There exist a projection $p\in R$ and an element $e\in R$ such that $aR=pR$, $pa=pb$, $Re=Ra$, $ea=a$ and $ae=be$.

\emph{(vii)}  There exist a projection $p\in R$ and an element $e\in R$ such that ${^0}p={^0}a$, $pa=pb$, $e^0=(a)^0$, $ea=a$ and $ae=be$.

\emph{(viii)}  There exist a projection $p\in R$ and an element $f\in R$ such that $aR=pR$, $pa=pb$, $Ra=Rf$, $af=bf$ and $fa=a$.

\emph{(ix)}  There exist a projection $p\in R$ and an element $f\in R$ such that ${^0}p={^0}a$, $pa=pb$, $a^0=f^0$, $af=bf$ and $fa=a$.

\emph{(x)} There exist a projection $p\in R$ and an element $f\in R$ such that ${^0}p={^0}a$, $pa=pb$, $a^0\subseteq f^0$, $af=bf$ and $fa=a$.

\emph{(xi)} There exist a projection $p\in R$ and an element $e\in R$ such that $pb=a=ea$ and $be=a$.

\emph{(xii)} There exist a projection $p\in R$ and an element $e\in R$ such that $pb=a=ea$ and $be=ae$.
\end{corollary}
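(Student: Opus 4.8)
The plan is to derive Corollary \ref{char core} as the specialization $w=1$ of Theorem \ref{char w core}, so the whole argument reduces to substituting $w=1$ and simplifying. First I would justify that the two standing hypotheses agree. By \cite[Theorem 2.11]{Zhu2020}, $a\in R_1^{\tiny{\textcircled{\#}}}$ iff $1\in R^{\parallel a}$ and $a\in R^{(1,3)}$; since $1\in R^{\parallel a}$ iff $a\in R^\#$ (with $1^{\parallel a}=aa^\#$), this reads $a\in R^\#\cap R^{(1,3)}$, which by \cite[Theorem 2.6]{Xu2017} is exactly $a\in R^{\tiny{\textcircled{\#}}}$. Thus $R_1^{\tiny{\textcircled{\#}}}=R^{\tiny{\textcircled{\#}}}$, and the hypothesis of the corollary is precisely the $w=1$ instance of the hypothesis of the theorem.

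Next I would record the substitutions. From $a_w^{\tiny{\textcircled{\#}}}=w^{\parallel a}a^{(1,3)}$ we obtain $a_1^{\tiny{\textcircled{\#}}}=aa^\# a^{(1,3)}=a^\# a a^{(1,3)}=a^{\tiny{\textcircled{\#}}}$, together with $w^{\parallel a}=aa^\#$, $(wa)^\#=a^\#$ and $wa=aw=a$. In particular the two defining equations of $\overset{\tiny{\textcircled{\#}}}\leq_1$ collapse to $a^{\tiny{\textcircled{\#}}}a=a^{\tiny{\textcircled{\#}}}b$ and $aa^{\tiny{\textcircled{\#}}}=ba^{\tiny{\textcircled{\#}}}$, i.e.\ to the core partial order $a\overset{\tiny{\textcircled{\#}}}\leq b$, which matches item (i). The idempotent/projection data built in the proof of Theorem \ref{idempotent1} also degenerate: $p=awa_w^{\tiny{\textcircled{\#}}}$ becomes $aa^{\tiny{\textcircled{\#}}}$, while both $e=a_w^{\tiny{\textcircled{\#}}}aw$ and $f=wa_w^{\tiny{\textcircled{\#}}}a$ become $a^{\tiny{\textcircled{\#}}}a$, so the two idempotents coincide at $w=1$.

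Then I would read off items (iii)--(xii) by term-by-term substitution: $bwa=awa$ becomes $ba=a^2$; $b(wa)^\#=a(wa)^\#$ becomes $ba^\#=aa^\#$; $a=awa_w^{\tiny{\textcircled{\#}}}b=bwa_w^{\tiny{\textcircled{\#}}}a$ becomes $a=aa^{\tiny{\textcircled{\#}}}b=ba^{\tiny{\textcircled{\#}}}a$; and in (vi)--(xii) one replaces each of $Raw,\ (aw)^0,\ awe,\ bwe,\ fwa,\ wa,\ aw$ by $Ra,\ a^0,\ ae,\ be,\ fa,\ a,\ a$ respectively. The single item needing more than substitution is (ii): the theorem gives $aa^\#=aa^\# a^{(1,3)}b$ and $a=baa^\#=ba^\# a$, and I would show this pair is equivalent to $a=aa^{(1,3)}b=ba^\# a$ by left-multiplying the first equation by $a$ and using $a^2a^\#=a$ to obtain $a=aa^{(1,3)}b$, and conversely by left-multiplying $a=aa^{(1,3)}b$ by $a^\#$ and using $a^\# a=aa^\#$ to recover $aa^\#=aa^\# a^{(1,3)}b$.

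Since Theorem \ref{char w core} is already proved in full generality, no genuinely new algebraic content arises here; the only obstacle is the bookkeeping of checking that every occurrence of $w$ collapses consistently and that the degenerate $p,e,f$ take the stated core-inverse forms $aa^{\tiny{\textcircled{\#}}}$, $a^{\tiny{\textcircled{\#}}}a$, $a^{\tiny{\textcircled{\#}}}a$. The cross-reference to \cite[Theorems 2.4 and 2.6]{Rakic2015} serves only to note that several of the resulting equivalences recover previously known characterizations of the core partial order.
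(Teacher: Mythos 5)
Your proposal is correct and follows exactly the paper's route: the paper likewise obtains the corollary by setting $w=1$ in Theorem \ref{char w core}, using $1^{\parallel a}=aa^\#$ (so $a_1^{\tiny{\textcircled{\#}}}=a^{\tiny{\textcircled{\#}}}$ and $R_1^{\tiny{\textcircled{\#}}}=R^{\tiny{\textcircled{\#}}}$) and reducing condition (ii) to $a=aa^{(1,3)}b=ba^\#a$. Your verification of that reduction (multiplying by $a$, respectively $a^\#$, and using $a^2a^\#=a$ and $aa^\#=a^\#a$) is precisely the bookkeeping the paper leaves implicit.
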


\section{Connections with other partial orders}

For any $a,b\in R$, recall that the left star partial order $a ~*\leq b$ is defined as $a^*a=a^*b$ and $aR \subseteq bR$. The right sharp partial order $a\leq_\#b$ is defined by $aa^\#=ba^\#$ and $Ra\subseteq Rb$ for $a\in R^\#$. It is known from \cite{Baksalary2010} that $A\overset{\tiny{\textcircled{\#}}}\leq B$ if and only if $A~*\leq B$ and $A\leq_\#B$, where $A,B$ are $n \times n$ complex matrices of index 1. The characterization for the core partial order indeed holds in a $*$-ring, namely, for any $a,b\in R$ and $a\in R^{\tiny{\textcircled{\#}}}$, then $a\overset{\tiny{\textcircled{\#}}}\leq b$ if and only if $a~*\leq b$ and $a\leq_\#b$.

As stated in Section 1, $a \in R_w^{\tiny{\textcircled{\#}}}$ if and only if $w\in R^{\parallel a}$ and $a\in R^{(1,3)}$. In terms of Lemma \ref{group result}, one knows that $a\in R_w^{\tiny{\textcircled{\#}}}$ implies $wa\in R^\#$. It is natural to ask whether the $w$-core partial order also has a similar characterization, i.e., if $a\in R_w^{\tiny{\textcircled{\#}}}$, whether $a\overset{\tiny{\textcircled{\#}}}\leq_w b$ is equivalent to $a~*\leq b$ and $wa\leq_\#wb$.

The following result gives the implication $a\overset{\tiny{\textcircled{\#}}}\leq_w b \Rightarrow a~*\leq b$ and $wa\leq_\#wb$. For the converse part, there is, of course, a counterexample (see Example \ref{ex2}) to illustrate that it is not true in general.

\begin{proposition} \label{w-core imply} For any $a,b,w\in R$ and $a\in R_w^{\tiny{\textcircled{\#}}}$, if $a\overset{\tiny{\textcircled{\#}}}\leq_w b$, then $a~*\leq b$ and $wa\leq_\#wb$.
\end{proposition}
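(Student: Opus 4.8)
The plan is to unpack the hypothesis $a\overset{\tiny{\textcircled{\#}}}\leq_w b$ through Theorem \ref{char w core}, which converts the two defining equations into the concrete identities I will actually manipulate, and then to verify the four requirements separately: for $a~*\leq b$ I must produce $a^*a=a^*b$ together with $aR\subseteq bR$, and for $wa\leq_\# wb$ I must produce $(wa)(wa)^\#=(wb)(wa)^\#$ together with $R(wa)\subseteq R(wb)$. Since $a\in R_w^{\tiny{\textcircled{\#}}}$ means $w\in R^{\parallel a}$, Lemma \ref{group result} guarantees $wa\in R^\#$, so the group inverse $(wa)^\#$ occurring in the sharp-order conditions is legitimate; I will also use $b\in R_w^{\tiny{\textcircled{\#}}}$ (implicit in the relation through Definition \ref{w core relation}), which analogously gives $wb\in R^\#$ and the existence of $w^{\parallel b}$ with $bww^{\parallel b}=b$.

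Three of the four requirements are essentially immediate. The equality $a^*a=a^*b$ is exactly part of condition (iii) of Theorem \ref{char w core}. For the column containment, condition (v) gives $a=bwa_w^{\tiny{\textcircled{\#}}}a=b\,(wa_w^{\tiny{\textcircled{\#}}}a)\in bR$, whence $aR\subseteq bR$. For the first sharp condition, condition (iv) gives $a(wa)^\#=b(wa)^\#$, and left-multiplying by $w$ yields $(wa)(wa)^\#=(wb)(wa)^\#$.

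The main obstacle is the row containment $R(wa)\subseteq R(wb)$, and it is genuinely the hard part: every naive consequence of the ``sharp'' identity $awa=bwa$ (equivalently $(wa)^2=(wb)(wa)$) produces the wrong-sided statement $(wa)R\subseteq(wb)R$ rather than the required $R(wa)\subseteq R(wb)$, so the left-ideal containment must be extracted from the ``star'' side and, crucially, from $b\in R_w^{\tiny{\textcircled{\#}}}$. My route is as follows. From condition (v), $a=awa_w^{\tiny{\textcircled{\#}}}b\in Rb$, hence $wa\in Rb$; write $wa=sb$. Using $bww^{\parallel b}=b$ I then compute $wa=sb=s\big(bww^{\parallel b}\big)=(sb)\,ww^{\parallel b}=wa\cdot ww^{\parallel b}$. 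Finally, by Lemma \ref{group result}, $ww^{\parallel b}=wb(wb)^\#=(wb)^\#(wb)$, so $wa=wa(wb)^\#(wb)=\big(wa(wb)^\#\big)(wb)\in R(wb)$, giving $R(wa)\subseteq R(wb)$ and completing $wa\leq_\# wb$.

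I expect the delicate point to be recognising that $b\in R_w^{\tiny{\textcircled{\#}}}$, not merely $a\in R_w^{\tiny{\textcircled{\#}}}$, is indispensable: the identity $bww^{\parallel b}=b$ is precisely what upgrades the membership $wa\in Rb$ to membership in the smaller left ideal $R(wb)$. Without group invertibility of $wb$ this row containment can fail, so the argument should be organised so that this hypothesis is used exactly at the step $wa=wa\cdot ww^{\parallel b}$.
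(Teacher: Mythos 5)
Your proof is correct and takes essentially the same route as the paper's: the parts $a^*a=a^*b$, $aR\subseteq bR$ and $wa(wa)^\#=wb(wa)^\#$ are obtained exactly as in the paper, and the key containment $R(wa)\subseteq R(wb)$ is likewise extracted by writing $wa=(wawa_w^{\tiny{\textcircled{\#}}})b$ and then using the $w$-core invertibility of $b$ to upgrade membership in $Rb$ to membership in $R(wb)$. The only (cosmetic) difference is the witness for that upgrade: the paper substitutes the defining identity $b=b_w^{\tiny{\textcircled{\#}}}bwb$, whereas you substitute $b=bww^{\parallel b}$ and invoke $ww^{\parallel b}=wb(wb)^\#=(wb)^\#wb$ from Lemma \ref{group result}; both are equally valid.
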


\begin{proof}
Given $a\overset{\tiny{\textcircled{\#}}}\leq_w b$, i.e., $a_w^{\tiny{\textcircled{\#}}} a=a_w^{\tiny{\textcircled{\#}}} b$ and $awa_w^{\tiny{\textcircled{\#}}} =bwa_w^{\tiny{\textcircled{\#}}}$, then $a=awa_w^{\tiny{\textcircled{\#}}}a=bwa_w^{\tiny{\textcircled{\#}}}a$ and hence $aR\subseteq bR$. It follows from Theorem \ref{char w core} (i) $\Rightarrow$ (iii) that $a^*a=a^*b$. So, $a~*\leq b$.

From $a_w^{\tiny{\textcircled{\#}}} a=a_w^{\tiny{\textcircled{\#}}} b$, we have $R(wa)=R(wawa_w^{\tiny{\textcircled{\#}}}a)=R(wawa_w^{\tiny{\textcircled{\#}}}b)=R(wawa_w^{\tiny{\textcircled{\#}}}(b_w^{\tiny{\textcircled{\#}}}bwb))
\subseteq R(wb)$. Note also that $awa_w^{\tiny{\textcircled{\#}}} =bwa_w^{\tiny{\textcircled{\#}}}$. Then, by Theorem \ref{char w core}, $a(wa)^\#=b(wa)^\#$ and consequently $wa(wa)^\#=wb(wa)^\#$. So, $wa\leq_\#wb$.
\hfill$\Box$
\end{proof}

\begin{example} \label{ex2} {\rm Let $R$ and the involution be the same as that of Example \ref{ex1}. Set $a=\begin{bmatrix}
1 & 1 \\
0 & 0 \\
\end{bmatrix}
$, $w=\begin{bmatrix}
           1 & 0 \\
           0 & 0 \\
         \end{bmatrix}$, $b=\begin{bmatrix}
           1 & 1 \\
           2 & 0 \\
         \end{bmatrix}
\in R$, then $a_w^{\tiny{\textcircled{\#}}}=
\begin{bmatrix}
1 & 0 \\
0 & 0 \\
\end{bmatrix}$. By a direct check,  $a^*a=\begin{bmatrix}
           1 & 0 \\
           1 & 0 \\
         \end{bmatrix}\begin{bmatrix}
           1 & 1 \\
           0 & 0 \\
         \end{bmatrix}=\begin{bmatrix}
           1 & 1 \\
           1 & 1 \\
         \end{bmatrix}=\begin{bmatrix}
           1 & 0 \\
           1 & 0 \\
         \end{bmatrix}\begin{bmatrix}
           1 & 1 \\
           2 & 0 \\
         \end{bmatrix}=a^*b$ and $aR\subseteq bR$. So, $a~*\leq b$. Note that $wa=wb=\begin{bmatrix}
           1 & 1 \\
           0 & 0 \\
         \end{bmatrix}$ are idempotent. Then $Rwa\subseteq Rwb$, $(wa)^\#=(wb)^\#=\begin{bmatrix}
           1 & 1 \\
           0 & 0 \\
         \end{bmatrix}$ and $wa(wa)^\#=\begin{bmatrix}
           1 & 1 \\
           0 & 0 \\
         \end{bmatrix}=wb(wa)^\#$. So, $wa\leq_\#wb$. However, $awa_w^{\tiny{\textcircled{\#}}}=\begin{bmatrix}
           1 & 0 \\
           0 & 0 \\
         \end{bmatrix} \neq \begin{bmatrix}
           1 & 0 \\
           2 & 0 \\
         \end{bmatrix}=bwa_w^{\tiny{\textcircled{\#}}}$. So, $a\overset{\tiny{\textcircled{\#}}}\leq_w b$ does not hold.}
\end{example}

It is of interest to consider, under what conditions, $ a~*\leq b$ and $wa\leq_\#wb$ can imply $a\overset{\tiny{\textcircled{\#}}}\leq_w b$, provided that $a\in R_w^{\tiny{\textcircled{\#}}}$. The result below shows that the implication is true under the hypothesis $w\in U(R)$, where $U(R)$ denotes the group of all units in $R$.

\begin{theorem} \label{w-core left star right sharp} For any $a,b,w\in R$ and $a\in R_w^{\tiny{\textcircled{\#}}}$, if $w\in U(R)$, then $a\overset{\tiny{\textcircled{\#}}}\leq_w b$ if and only if $a~*\leq b$ and $wa\leq_\#wb$.
\end{theorem}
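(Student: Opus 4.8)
The forward implication $a\overset{\tiny{\textcircled{\#}}}\leq_w b \Rightarrow a~*\leq b$ and $wa\leq_\#wb$ is precisely Proposition \ref{w-core imply}, and it holds without any invertibility hypothesis on $w$. So the plan is to concentrate entirely on the converse, where the assumption $w\in U(R)$ will be used. The target is to verify the two equalities in condition (iii) of Theorem \ref{char w core}, namely $a^*a=a^*b$ and $awa=bwa$, from which $a\overset{\tiny{\textcircled{\#}}}\leq_w b$ follows by the implication (iii)$\Rightarrow$(i) of that theorem.

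The first equality $a^*a=a^*b$ is immediate, since it is built into the definition of the left star order $a~*\leq b$. For the second, I would exploit the right sharp order $wa\leq_\#wb$. Recall that $a\in R_w^{\tiny{\textcircled{\#}}}$ forces $wa\in R^\#$ (via $w\in R^{\parallel a}$ and Lemma \ref{group result}), so $(wa)^\#$ exists and the hypothesis gives $wa(wa)^\#=wb(wa)^\#$. Writing $x=wa$, post-multiplication of this identity by $x^2$, together with the standard group-inverse relation $x^\#x^2=x$ (since $x^\#x^2=(xx^\#)x=x$), yields $x^2=wb\,x$; that is, $(wa)^2=wbwa$, or equivalently $w(awa)=w(bwa)$.

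The key step, and the only place the hypothesis is needed, is the final left-cancellation: since $w\in U(R)$ is a unit, $w(awa)=w(bwa)$ gives $awa=bwa$. This is the main obstacle in the sense that, without invertibility of $w$, one is left only with $wawa=wbwa$, which is genuinely weaker and does not force $awa=bwa$; this is exactly the phenomenon exhibited by Example \ref{ex2}. Having established $a^*a=a^*b$ and $awa=bwa$, the implication (iii)$\Rightarrow$(i) of Theorem \ref{char w core} closes the argument. I note in passing that the right-ideal containments $aR\subseteq bR$ and $R(wa)\subseteq R(wb)$ carried by the two orders are not even needed for the converse; only the two algebraic identities above are used.
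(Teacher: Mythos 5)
Your proof is correct and follows essentially the same route as the paper: both reduce the converse to condition (iii) of Theorem \ref{char w core} (i.e.\ $a^*a=a^*b$ and $awa=bwa$), extract $a^*a=a^*b$ from the left star order, and obtain $awa=bwa$ from the identity $wa(wa)^\#=wb(wa)^\#$ by group-inverse algebra followed by left cancellation of the unit $w$. The only cosmetic difference is that you post-multiply by $(wa)^2$ to reach $wawa=wbwa$ and then cancel $w$, whereas the paper first rewrites $wa=wbww^{\parallel a}$ using $w^{\parallel a}=a(wa)^\#$, cancels $w$ to get $a=bww^{\parallel a}$, and then post-multiplies by $wa$.
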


\begin{proof} We only need to prove the ``if'' part. Note that $a~*\leq b$ implies $a^*a=a^*b$ and hence $a_w^{\tiny{\textcircled{\#}}}a=a_w^{\tiny{\textcircled{\#}}}b$ in terms of Theorem \ref{projection1}. To show $a\overset{\tiny{\textcircled{\#}}}\leq_w b$, it suffices to prove $awa=bwa$ by Theorem \ref{char w core}. Since $wa(wa)^\#=wb(wa)^\#$, we have $wa=wb(wa)^\#wa=wbwa(wa)^\#=wbww^{\parallel a}$ and hence $a=w^{-1}wa=w^{-1}wbww^{\parallel a}=bww^{\parallel a}$. Post-multiplying $a=bww^{\parallel a}$ by $wa$ implies $awa=bw(w^{\parallel a}wa)=bwa$, as required.
\hfill$\Box$
\end{proof}

From the proof of Theorem \ref{w-core left star right sharp}, one knows that if $w\in R$ is left invertible, then we also have the equivalence that $a\overset{\tiny{\textcircled{\#}}}\leq_w b$ if and only if $a~*\leq b$ and $wa\leq_\#wb$.

With the next theorem we will present more relationship between the $w$-core partial order and the left star partial order in a $*$-ring $R$.

\begin{theorem} Let $a,b,w\in R$ with $a,b\in R_w^{\tiny{\textcircled{\#}}}$. Then the following conditions are equivalent{\rm :}

\emph{(i)} $a\overset{\tiny{\textcircled{\#}}}\leq_w b$.

\emph{(ii)}  $a~*\leq b$ and $a=bwa_w ^{\tiny{\textcircled{\#}}}b$.

\emph{(iii)}  $a~*\leq b$ and $a_w ^{\tiny{\textcircled{\#}}}=b_w ^{\tiny{\textcircled{\#}}}awa_w ^{\tiny{\textcircled{\#}}}$.

\emph{(iv)}  $a~*\leq b$ and $a_w ^{\tiny{\textcircled{\#}}}=b_w ^{\tiny{\textcircled{\#}}}awb_w ^{\tiny{\textcircled{\#}}}$.
\end{theorem}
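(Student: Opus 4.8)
The plan is to use (i) as a hub and prove (i)$\Leftrightarrow$(ii), (i)$\Leftrightarrow$(iii), and (iii)$\Leftrightarrow$(iv). The organizing observation is that the clause $a~*\leq b$ appears in each of (ii)--(iv) and is automatic from (i): by Proposition \ref{w-core imply}, $a\overset{\tiny{\textcircled{\#}}}\leq_w b$ already yields $a~*\leq b$, and conversely $a~*\leq b$ delivers $a^*a=a^*b$, hence $a_w^{\tiny{\textcircled{\#}}}a=a_w^{\tiny{\textcircled{\#}}}b$ by Theorem \ref{projection1}. So in every direction the left-star clause is free, the first defining equation of (i) comes along with it, and the entire task reduces to matching the second equation of each item with the remaining defining equation $awa_w^{\tiny{\textcircled{\#}}}=bwa_w^{\tiny{\textcircled{\#}}}$.

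The forward implications are quick. From $a\overset{\tiny{\textcircled{\#}}}\leq_w b$ one has $a_w^{\tiny{\textcircled{\#}}}b=a_w^{\tiny{\textcircled{\#}}}a$, so $bwa_w^{\tiny{\textcircled{\#}}}b=bwa_w^{\tiny{\textcircled{\#}}}a=a$ (the last step being Theorem \ref{char w core}(v)), giving (ii); while (iii) and (iv) are exactly the identities $b_w^{\tiny{\textcircled{\#}}}awa_w^{\tiny{\textcircled{\#}}}=a_w^{\tiny{\textcircled{\#}}}$ and $b_w^{\tiny{\textcircled{\#}}}awb_w^{\tiny{\textcircled{\#}}}=a_w^{\tiny{\textcircled{\#}}}$ recorded in Proposition \ref{new add pro 2.6}(ii).

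For the converse directions I would introduce the two self-adjoint idempotents $p=awa_w^{\tiny{\textcircled{\#}}}$ and $q=bwb_w^{\tiny{\textcircled{\#}}}$ (projections, by the remark before Lemma \ref{projection}), for which $pR=aR$ and $qR=bR$. The clause $a~*\leq b$ then supplies two ingredients: $a=awa_w^{\tiny{\textcircled{\#}}}b=pb$ from Theorem \ref{projection1}, and $aR\subseteq bR$, i.e. $pR\subseteq qR$, which for self-adjoint idempotents forces $qp=p=pq$. These already give $awb_w^{\tiny{\textcircled{\#}}}=p\,bwb_w^{\tiny{\textcircled{\#}}}=pq=p=awa_w^{\tiny{\textcircled{\#}}}$, so that under $a~*\leq b$ the second clauses of (iii) and (iv) are literally the same equation---this settles (iii)$\Leftrightarrow$(iv). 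To finish (iii)$\Rightarrow$(i), left-multiply $a_w^{\tiny{\textcircled{\#}}}=b_w^{\tiny{\textcircled{\#}}}awa_w^{\tiny{\textcircled{\#}}}$ by $bw$: the right side collapses through $bwb_w^{\tiny{\textcircled{\#}}}=q$ to $bwa_w^{\tiny{\textcircled{\#}}}=qp=p=awa_w^{\tiny{\textcircled{\#}}}$, which is the missing equation of (i). For (ii)$\Rightarrow$(i) the route is shorter still: $a=bwa_w^{\tiny{\textcircled{\#}}}b=bwa_w^{\tiny{\textcircled{\#}}}a$, and $a=bwa_w^{\tiny{\textcircled{\#}}}a$ is condition (v) of Theorem \ref{idempotent1}, equivalent to $awa_w^{\tiny{\textcircled{\#}}}=bwa_w^{\tiny{\textcircled{\#}}}$.

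The only genuinely non-formal step---the main obstacle---is recognizing that the backward implications rest on converting the ideal inclusion $aR\subseteq bR$ into the projection identities $qp=pq=p$; after that each converse is a single left-multiplication by $bw$. The one place to be careful is the passage $pR\subseteq qR\Rightarrow qp=p$ and, taking adjoints, $pq=p$, where self-adjointness of $p$ and $q$ (not merely idempotency) is essential.
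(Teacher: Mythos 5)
Your proposal is correct, and it keeps the paper's overall skeleton --- (i) as the hub, the forward implications read off from Propositions \ref{w-core imply} and \ref{new add pro 2.6}, and Theorem \ref{projection1} converting the left-star clause into the first defining equation $a_w^{\tiny{\textcircled{\#}}}a=a_w^{\tiny{\textcircled{\#}}}b$ in every converse direction --- but your handling of the converses is genuinely reorganized. The paper proves (ii)$\Rightarrow$(i), (iii)$\Rightarrow$(i) and (iv)$\Rightarrow$(i) by three separate equational chains: for (ii) it substitutes $a=bwa_w^{\tiny{\textcircled{\#}}}b$ into $awa_w^{\tiny{\textcircled{\#}}}$ and collapses; for (iii) it turns $aR\subseteq bR$ into $a=bwb_w^{\tiny{\textcircled{\#}}}a$ and substitutes; for (iv) it first derives $bwa_w^{\tiny{\textcircled{\#}}}=awb_w^{\tiny{\textcircled{\#}}}$ and then runs the longest computation of the proof. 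You instead encode the range inclusion once and for all in the projection identities $qp=pq=p$ for $p=awa_w^{\tiny{\textcircled{\#}}}$ and $q=bwb_w^{\tiny{\textcircled{\#}}}$ (correctly flagging that self-adjointness, not mere idempotency, is what gives $pq=p$ from $qp=p$); this makes the second clauses of (iii) and (iv) literally the same equation under $a~*\leq b$, so (iv) is disposed of by reduction to (iii) rather than by the paper's six-step chain, and your appeal to Theorem \ref{idempotent1}(v)$\Leftrightarrow$(i) replaces the paper's hand computation in (ii)$\Rightarrow$(i). The trade-off: the paper's chains are verified directly from the defining identities of the $w$-core inverse, while your argument is shorter and more conceptual --- it explains \emph{why} (iii) and (iv) are interchangeable --- at the price of leaning on Lemma \ref{projection} and Theorems \ref{projection1} and \ref{idempotent1} as black boxes; since all of these are available in the paper and are applied within their hypotheses (only $a\in R_w^{\tiny{\textcircled{\#}}}$ is needed where you use them, with $b\in R_w^{\tiny{\textcircled{\#}}}$ needed only to form $q$), there is no gap.
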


\begin{proof}
(i) $\Rightarrow$ (ii) follows from Theorem \ref{char w core} and Proposition \ref{w-core imply}. (i) $\Rightarrow$ (iii) and (i) $\Rightarrow$ (iv) by Propositions \ref{new add pro 2.6} and \ref{w-core imply}.

(ii) $\Rightarrow$ (i) Given $a~*\leq b$, then $a^*a=a^*b$ and hence $a_w ^{\tiny{\textcircled{\#}}}a=a_w ^{\tiny{\textcircled{\#}}}b$ by Theorem \ref{projection1}. Also, one has $awa_w ^{\tiny{\textcircled{\#}}}=(bwa_w ^{\tiny{\textcircled{\#}}}b)wa_w ^{\tiny{\textcircled{\#}}}=bw(a_w ^{\tiny{\textcircled{\#}}}a)wa_w ^{\tiny{\textcircled{\#}}}=bw(a_w ^{\tiny{\textcircled{\#}}}awa_w ^{\tiny{\textcircled{\#}}})=bwa_w ^{\tiny{\textcircled{\#}}}$. So, $a\overset{\tiny{\textcircled{\#}}}\leq_w b$.

(iii) $\Rightarrow$ (i) As $a~*\leq b$, i.e., $a^*a=a^*b$ and $aR\subseteq bR$, then there exists some $x\in R$ such that $a=bx=bwb_w ^{\tiny{\textcircled{\#}}}bx=bwb_w ^{\tiny{\textcircled{\#}}}a$. Hence, $awa_w ^{\tiny{\textcircled{\#}}}=(bwb_w ^{\tiny{\textcircled{\#}}}a)wa_w ^{\tiny{\textcircled{\#}}}=bw(b_w ^{\tiny{\textcircled{\#}}}awa_w ^{\tiny{\textcircled{\#}}})=bwa_w ^{\tiny{\textcircled{\#}}}$. Since $a^*a=a^*b$, $a_w ^{\tiny{\textcircled{\#}}}a=a_w ^{\tiny{\textcircled{\#}}}b$ by Theorem \ref{projection1}.

(iv) $\Rightarrow$ (i) By (ii) $\Rightarrow$ (i), $a_w ^{\tiny{\textcircled{\#}}}a=a_w ^{\tiny{\textcircled{\#}}}b$. Next, it is only need to show $awa_w ^{\tiny{\textcircled{\#}}}=bwa_w ^{\tiny{\textcircled{\#}}}$. Note that $aR \subseteq bR$ implies $a=bwb_w ^{\tiny{\textcircled{\#}}}a$. So, $bwa_w ^{\tiny{\textcircled{\#}}}=bw(b_w ^{\tiny{\textcircled{\#}}}awb_w ^{\tiny{\textcircled{\#}}})=(bwb_w ^{\tiny{\textcircled{\#}}}a)wb_w ^{\tiny{\textcircled{\#}}}=awb_w ^{\tiny{\textcircled{\#}}}$.

Hence, we have
\begin{eqnarray*}
awa_w ^{\tiny{\textcircled{\#}}}&=&aw(b_w ^{\tiny{\textcircled{\#}}}awb_w ^{\tiny{\textcircled{\#}}})=awb_w ^{\tiny{\textcircled{\#}}}aw(b_w ^{\tiny{\textcircled{\#}}}bwb_w ^{\tiny{\textcircled{\#}}})\\
&=&aw(b_w ^{\tiny{\textcircled{\#}}}awb_w ^{\tiny{\textcircled{\#}}})bwb_w ^{\tiny{\textcircled{\#}}}=awa_w ^{\tiny{\textcircled{\#}}}bwb_w ^{\tiny{\textcircled{\#}}}\\
&=&aw(a_w ^{\tiny{\textcircled{\#}}}a)wb_w ^{\tiny{\textcircled{\#}}}=awb_w ^{\tiny{\textcircled{\#}}}\\
&=&bwa_w ^{\tiny{\textcircled{\#}}}.
\end{eqnarray*}

The proof is completed. \hfill$\Box$
\end{proof}

Similarly, the connection between the $w$-core partial order and the right sharp partial order can be given. The proof is left to the reader.

\begin{theorem} Let $a,b,w\in R$ with $a,b\in R_w^{\tiny{\textcircled{\#}}}$. If $w\in U(R)$, then the following conditions are equivalent{\rm :}

\emph{(i)} $a\overset{\tiny{\textcircled{\#}}}\leq_w b$.

\emph{(ii)} $wa\leq_\# wb$ and $a=bwa_w ^{\tiny{\textcircled{\#}}}b$.

\emph{(iii)} $wa\leq_\# wb$ and $a_w ^{\tiny{\textcircled{\#}}}=a_w ^{\tiny{\textcircled{\#}}}awb_w ^{\tiny{\textcircled{\#}}}$.
\end{theorem}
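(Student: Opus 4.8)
The plan is to mirror the structure of the preceding theorem, replacing the left star partial order by the right sharp partial order and using crucially that $w\in U(R)$. The two implications out of (i) are the cheap ones: $a\overset{\tiny{\textcircled{\#}}}\leq_w b$ already yields $wa\leq_\# wb$ by Proposition \ref{w-core imply}, so for both (ii) and (iii) only the accompanying product identity must be checked. For (iii) this identity is exactly $a_w^{\tiny{\textcircled{\#}}}awb_w^{\tiny{\textcircled{\#}}}=a_w^{\tiny{\textcircled{\#}}}$, which is one of the equalities of Proposition \ref{new add pro 2.6}(ii); for (ii) I would compute $bwa_w^{\tiny{\textcircled{\#}}}b=awa_w^{\tiny{\textcircled{\#}}}b=awa_w^{\tiny{\textcircled{\#}}}a=a$, invoking the two defining equations $awa_w^{\tiny{\textcircled{\#}}}=bwa_w^{\tiny{\textcircled{\#}}}$, $a_w^{\tiny{\textcircled{\#}}}a=a_w^{\tiny{\textcircled{\#}}}b$ and the identity $awa_w^{\tiny{\textcircled{\#}}}a=a$.

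The common engine for the two converse implications is the reduction of the right sharp hypothesis. Since $w$ is a unit, cancelling $w$ on the left in $wa(wa)^\#=wb(wa)^\#$ gives $a(wa)^\#=b(wa)^\#$, whence $awa_w^{\tiny{\textcircled{\#}}}=bwa_w^{\tiny{\textcircled{\#}}}$ by Theorem \ref{idempotent1}; and $R(wa)\subseteq R(wb)$ together with the invertibility of $w$ forces $a\in Rb$, say $a=tb$. Thus in each converse the second defining equation $awa_w^{\tiny{\textcircled{\#}}}=bwa_w^{\tiny{\textcircled{\#}}}$ comes for free, and it remains only to produce the first equation $a_w^{\tiny{\textcircled{\#}}}a=a_w^{\tiny{\textcircled{\#}}}b$ from the extra product hypothesis.

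For (ii)$\Rightarrow$(i) I would left-multiply $awa_w^{\tiny{\textcircled{\#}}}=bwa_w^{\tiny{\textcircled{\#}}}$ by $a_w^{\tiny{\textcircled{\#}}}$ to get $a_w^{\tiny{\textcircled{\#}}}=a_w^{\tiny{\textcircled{\#}}}bwa_w^{\tiny{\textcircled{\#}}}$, and then substitute $a=bwa_w^{\tiny{\textcircled{\#}}}b$ to reach $a_w^{\tiny{\textcircled{\#}}}a=(a_w^{\tiny{\textcircled{\#}}}bwa_w^{\tiny{\textcircled{\#}}})b=a_w^{\tiny{\textcircled{\#}}}b$. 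For (iii)$\Rightarrow$(i) I would first promote the hypothesis $a_w^{\tiny{\textcircled{\#}}}=a_w^{\tiny{\textcircled{\#}}}awb_w^{\tiny{\textcircled{\#}}}$ to $awa_w^{\tiny{\textcircled{\#}}}=awb_w^{\tiny{\textcircled{\#}}}$, by left-multiplying with $aw$ and using $awa_w^{\tiny{\textcircled{\#}}}aw=aw$, and then compute $awa_w^{\tiny{\textcircled{\#}}}b=awb_w^{\tiny{\textcircled{\#}}}b=t(bwb_w^{\tiny{\textcircled{\#}}}b)=tb=a$; since $a=awa_w^{\tiny{\textcircled{\#}}}b$ is equivalent to $a_w^{\tiny{\textcircled{\#}}}a=a_w^{\tiny{\textcircled{\#}}}b$ by Theorem \ref{projection1}, this finishes the argument.

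I expect the one real obstacle to be the production of the first equation $a_w^{\tiny{\textcircled{\#}}}a=a_w^{\tiny{\textcircled{\#}}}b$. The right sharp hypothesis is tuned to the idempotent data on the $aw$-side and hands over $awa_w^{\tiny{\textcircled{\#}}}=bwa_w^{\tiny{\textcircled{\#}}}$ with almost no work, but it carries no adjoint information, so the first equation is forced to come from the auxiliary product identity together with $a\in Rb$. The unit hypothesis on $w$ enters essentially at exactly two points -- cancelling $w$ on the left and securing $a\in Rb$ -- and Example \ref{ex2} warns that the equivalence genuinely fails when $w$ is not invertible, so I would take care that no step quietly uses more about $w$ than these two facts.
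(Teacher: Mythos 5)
Your proof is correct, and it is essentially the argument the paper intends: the paper omits this proof as one ``similar'' to the preceding left-star theorem, and your proposal is exactly that analogue, built from the paper's own lemmas. Specifically, the forward implications follow from Propositions \ref{w-core imply} and \ref{new add pro 2.6} plus the defining equations, and your converses correctly exploit $w\in U(R)$ at precisely the two necessary points --- cancelling $w$ in $wa(wa)^{\#}=wb(wa)^{\#}$ to get $awa_w^{\tiny{\textcircled{\#}}}=bwa_w^{\tiny{\textcircled{\#}}}$ via Theorem \ref{idempotent1}, and converting $Rwa\subseteq Rwb$ into $a\in Rb$ --- before recovering $a_w^{\tiny{\textcircled{\#}}}a=a_w^{\tiny{\textcircled{\#}}}b$ from the auxiliary product identity (for (iii) through $a=awa_w^{\tiny{\textcircled{\#}}}b$ and Theorem \ref{projection1}).
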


As is noted in Example \ref{ex1}, the $w$-core partial order may not imply the core partial order. We next consider under what conditions the $w$-core partial order gives a core partial order. Herein, a lemma is presented, which was indeed given in \cite[Theorem 2.25]{Zhu2020}. For completeness, we give its proof.

\begin{lemma} \label{Zhulemma} Let $a,w\in R$. Then $a\in R_w^{\tiny\textcircled{\tiny{\#}}}$ if and only if $aR= awR$ and $aw\in R^{\tiny\textcircled{\tiny{\#}}}$. In this case, $a_w^{\tiny\textcircled{\tiny{\#}}}=(aw)^{\tiny\textcircled{\tiny{\#}}}$.
\end{lemma}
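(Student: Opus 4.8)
The plan is to prove both implications by directly checking defining equations, since the three equations defining the $w$-core inverse of $a$ all but coincide with three of the five equations defining the core inverse of $aw$. I would write $c=aw$ throughout and use, from \cite[Lemma 2.2]{Zhu2020}, that a $w$-core inverse $x$ of $a$ also satisfies $awxa=a$ and $xawx=x$, together with the fact that $y$ is the core inverse of $c$ precisely when $cyc=c$, $ycy=y$, $cy^2=y$, $yc^2=c$ and $(cy)^*=cy$.

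For the forward direction I would set $x=a_w^{\tiny{\textcircled{\#}}}$ and argue in two steps. First, $awxa=a$ gives $a=aw(xa)\in awR$, hence $aR\subseteq awR$; as $awR\subseteq aR$ is automatic this yields $aR=awR$. Second, I would check that $x$ satisfies all five core-inverse equations for $c$. Three of them, namely $cx^2=x$, $xcx=x$ and $(cx)^*=cx$, are literally the $w$-core equations $awx^2=x$, $xawx=x$ and $(awx)^*=awx$; the other two follow by post-multiplying the identities $awxa=a$ and $xawa=a$ by $w$, giving $cxc=awxaw=(awxa)w=aw=c$ and $xc^2=xawaw=(xawa)w=aw=c$. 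This shows $aw\in R^{\tiny{\textcircled{\#}}}$ and $(aw)^{\tiny{\textcircled{\#}}}=x=a_w^{\tiny{\textcircled{\#}}}$.

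For the converse I would set $y=(aw)^{\tiny{\textcircled{\#}}}$ and verify the three $w$-core equations for $y$. Two of them, $awy^2=y$ and $(awy)^*=awy$, are just the core equations $cy^2=y$ and $(cy)^*=cy$. The remaining equation $yawa=a$ is where the hypothesis $aR=awR$ enters: writing $a=aws$ for some $s\in R$, I would compute $yawa=yaw(aws)=(yawaw)s=(yc^2)s=cs=aws=a$, using the fourth core equation $yc^2=c$. Uniqueness of the $w$-core inverse then gives $a\in R_w^{\tiny{\textcircled{\#}}}$ with $a_w^{\tiny{\textcircled{\#}}}=y=(aw)^{\tiny{\textcircled{\#}}}$.

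I expect the only real obstacle to be the single bridging identity $yawa=a$ in the converse; every other equation transfers mechanically between the two inverses. A convenient feature of this route is that it never invokes the external characterisations (Mary's Lemma \ref{group result} or the criterion of Xu et al.), relying solely on the raw defining equations and on the uniqueness of the core and $w$-core inverses.
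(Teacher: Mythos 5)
Your proof is correct, but it takes a genuinely different route from the paper's. In the forward direction the paper does not verify the five defining equations of the core inverse: it invokes the characterization $a\in R_w^{\tiny{\textcircled{\#}}}\Leftrightarrow w\in R^{\parallel a}$ and $a\in R^{(1,3)}$ from \cite[Theorem 2.11]{Zhu2020} to obtain $a\in awaR\subseteq awR$, and then checks only the three equations $x(aw)^2=aw$, $(aw)x^2=x$, $((aw)x)^*=(aw)x$, relying implicitly on the fact (again from \cite{Zhu2020}) that these three equations already characterize core invertibility of $aw$, i.e. $R^{\tiny{\textcircled{\#}}}=R_1^{\tiny{\textcircled{\#}}}$; your verification of all five equations, with $cxc=c$ and $xc^2=c$ obtained by post-multiplying $awxa=a$ and $xawa=a$ by $w$, is more self-contained on this point. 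The divergence is larger in the converse: the paper first shows $a\in Ra^*a$, hence $a\in R^{(1,3)}$ (via $a=awt=awya=(wy)^*a^*a$), then writes $(aw)^{\tiny{\textcircled{\#}}}=(aw)^\#aw(aw)^{(1,3)}=w^{\parallel a}w(aw)^{(1,3)}$ using Lemma \ref{group result}, and verifies that $z=w(aw)^{(1,3)}$ is a $\{1,3\}$-inverse of $a$, so that the representation $a_w^{\tiny{\textcircled{\#}}}=w^{\parallel a}a^{(1,3)}$ applies; you instead verify the three defining equations of the $w$-core inverse directly for $y=(aw)^{\tiny{\textcircled{\#}}}$, with the hypothesis $aR=awR$ entering exactly once, in the bridging identity $yawa=yaw(aws)=(y(aw)^2)s=aws=a$. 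What your argument buys is independence from the machinery of inverses along an element and $\{1,3\}$-inverses, needing only the raw equations, \cite[Lemma 2.2]{Zhu2020} (which the paper itself quotes in its introduction), and uniqueness of the two inverses; what the paper's argument buys is that it exhibits the $w$-core inverse in the structural form $w^{\parallel a}w(aw)^{(1,3)}$, the form that ties this lemma to the characterizations used elsewhere in the paper.
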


\begin{proof}

Suppose $a\in R_w^{\tiny\textcircled{\tiny{\#}}}$. Then $w\in R^{\parallel a}$ and hence $a\in awaR\subseteq awR$. Also, there exists some $x\in R$ such that $xawa=a$, $awx^2=x$ and $(awx)^*=awx$, which guarantee $xawaw=aw$, $awx^2=x$ and $(awx)^*=awx$. So $aw\in R^{\tiny\textcircled{\tiny{\#}}}$.

Conversely, as $aw\in R^{\tiny\textcircled{\tiny{\#}}}$, then there exists some $y\in R$ such that $awyaw=aw$, $yawy=y$, $y(aw)^2=aw$, $awy^2=y$ and $awy=(awy)^*$. Since $aR=awR$, $a=awt$ for some $t\in R$ and hence $a=awt=(awyaw)t=awya=(awy)^*a=(wy)^*a^*a\in Ra^*a$, i.e., $a\in R^{(1,3)}$.

Note the fact that $(aw)^{\tiny\textcircled{\tiny{\#}}}=(aw)^\#aw(aw)^{(1,3)}=w^{\parallel a}w(aw)^{(1,3)}$. To show that $w^{\parallel a}w(aw)^{(1,3)}$ is the $w$-core inverse of $a$, it suffices to prove that $z=w(aw)^{(1,3)}$ is a $\{1,3\}$-inverse of $a$. Since $a\in awR$, it follows that $a=awt$ for some $t\in R$ and $a=aw(aw)^{(1,3)}awt=aw(aw)^{(1,3)}a=aza$. Also, $az=aw(aw)^{(1,3)}=(az)^*$, as required. \hfill$\Box$
\end{proof}

\begin{theorem} \label{w-core and core} For any $a,b,w\in R$ and $a\in R_w^{\tiny{\textcircled{\#}}}$, if $w\in U(R)$, then the following conditions are equivalent{\rm :}

\emph{(i)} $a\overset{\tiny{\textcircled{\#}}}\leq_w b$.

\emph{(ii)} $aw\overset{\tiny{\textcircled{\#}}}\leq bw$.
\end{theorem}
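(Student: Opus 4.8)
The plan is to transfer the entire problem from the $w$-core inverse of $a$ to the ordinary core inverse of $aw$, using Lemma \ref{Zhulemma}, after which the two defining pairs of equations become nearly identical. Since $a\in R_w^{\tiny{\textcircled{\#}}}$, Lemma \ref{Zhulemma} supplies two facts: first, $aw\in R^{\tiny{\textcircled{\#}}}$, so that the relation $aw\overset{\tiny{\textcircled{\#}}}\leq bw$ on the right-hand side is even meaningful; and second, the crucial identity $a_w^{\tiny{\textcircled{\#}}}=(aw)^{\tiny{\textcircled{\#}}}$. I would therefore fix the common abbreviation $x:=a_w^{\tiny{\textcircled{\#}}}=(aw)^{\tiny{\textcircled{\#}}}$ and rewrite both orders in terms of the single element $x$.

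Unwinding Definition \ref{w core relation}, the relation $a\overset{\tiny{\textcircled{\#}}}\leq_w b$ reads $xa=xb$ together with $awx=bwx$. Unwinding the core partial order (6) applied to $aw$ and $bw$, the relation $aw\overset{\tiny{\textcircled{\#}}}\leq bw$ reads $x(aw)=x(bw)$ together with $(aw)x=(bw)x$, i.e. $xaw=xbw$ and $awx=bwx$. The second equation is literally the same in both lists, so nothing is required there. Consequently the whole theorem collapses to the single equivalence $xa=xb \iff xaw=xbw$.

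This last equivalence is exactly where the hypothesis $w\in U(R)$ is used, and it is the only place it enters. The forward implication $xa=xb\Rightarrow xaw=xbw$ comes for free by right-multiplication by $w$, while the reverse $xaw=xbw\Rightarrow xa=xb$ follows by right-multiplication by $w^{-1}$; I would dispatch each direction in one line. Assembling these two cancellations with the unchanged second equation immediately yields both implications of the theorem.

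There is no genuine obstacle once Lemma \ref{Zhulemma} is invoked: the entire content is the observation $a_w^{\tiny{\textcircled{\#}}}=(aw)^{\tiny{\textcircled{\#}}}$ followed by cancellation of the unit $w$. The one point deserving a careful sentence is the well-posedness of the core partial order on the right — definition (6) requires only $aw\in R^{\tiny{\textcircled{\#}}}$ and not $bw\in R^{\tiny{\textcircled{\#}}}$, which matches what Lemma \ref{Zhulemma} provides — and verifying that substituting $x=(aw)^{\tiny{\textcircled{\#}}}$ into the core conditions reproduces exactly $xaw=xbw$ and $awx=bwx$, with no hidden appeal to a core inverse of $bw$.
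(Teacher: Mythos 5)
Your proposal is correct and follows essentially the same route as the paper: invoke Lemma \ref{Zhulemma} to get $aw\in R^{\tiny{\textcircled{\#}}}$ and $a_w^{\tiny{\textcircled{\#}}}=(aw)^{\tiny{\textcircled{\#}}}$, observe that the second defining equations of the two orders are literally identical, and pass between the first equations by right-multiplying by $w$ (respectively $w^{-1}$, which is where $w\in U(R)$ enters). The only difference is cosmetic: in the direction (ii) $\Rightarrow$ (i) the paper redundantly re-verifies $a\in awR$ before re-invoking Lemma \ref{Zhulemma}, whereas you rightly note that the identification $a_w^{\tiny{\textcircled{\#}}}=(aw)^{\tiny{\textcircled{\#}}}$ is already available once and for all from the standing hypothesis $a\in R_w^{\tiny{\textcircled{\#}}}$.
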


\begin{proof}
(i) $\Rightarrow$ (ii) Suppose $a\overset{\tiny{\textcircled{\#}}}\leq_w b$, i.e., $a_w^{\tiny{\textcircled{\#}}} a=a_w^{\tiny{\textcircled{\#}}} b$ and $awa_w^{\tiny{\textcircled{\#}}} =bwa_w^{\tiny{\textcircled{\#}}}$. As $a\in R_w^{\tiny{\textcircled{\#}}}$, then $aw\in R^{\tiny{\textcircled{\#}}}$ and $a_w^{\tiny\textcircled{\tiny{\#}}}=(aw)^{\tiny\textcircled{\tiny{\#}}}$ by Lemma \ref{Zhulemma}. So, $aw(aw)^{\tiny{\textcircled{\#}}} =bw(aw)^{\tiny{\textcircled{\#}}}$, $(aw)^{\tiny{\textcircled{\#}}} a=(aw)^{\tiny{\textcircled{\#}}} b$ and hence $(aw)^{\tiny{\textcircled{\#}}} aw=(aw)^{\tiny{\textcircled{\#}}} bw$. So, $aw\overset{\tiny{\textcircled{\#}}}\leq bw$.

(ii) $\Rightarrow$ (i) As $aw\overset{\tiny{\textcircled{\#}}}\leq bw$, then $(aw)^{\tiny{\textcircled{\#}}} aw=(aw)^{\tiny{\textcircled{\#}}} bw$. Pre-multiplying $(aw)^{\tiny{\textcircled{\#}}} aw=(aw)^{\tiny{\textcircled{\#}}} bw$ by $aw$ gives $aw=aw(aw)^{\tiny{\textcircled{\#}}} bw$. Post-multiplying $aw=aw(aw)^{\tiny{\textcircled{\#}}} bw$ by $w^{\parallel a}$ implies $a=aww^{\parallel a}=aw(aw)^{\tiny{\textcircled{\#}}} bww^{\parallel a}\in awR$. So, $a_w^{\tiny\textcircled{\tiny{\#}}}=(aw)^{\tiny\textcircled{\tiny{\#}}}$ by Lemma \ref{Zhulemma}. Thus, $aw(aw)^{\tiny\textcircled{\tiny{\#}}}=bw(aw)^{\tiny\textcircled{\tiny{\#}}}$ guarantees that $awa_w^{\tiny\textcircled{\tiny{\#}}}=bwa_w^{\tiny\textcircled{\tiny{\#}}}$, and $(aw)^{\tiny{\textcircled{\#}}} aw=(aw)^{\tiny{\textcircled{\#}}} bw$ gives $a_w^{\tiny{\textcircled{\#}}} aw=a_w^{\tiny{\textcircled{\#}}} bw$. Since $w\in U(R)$, $a_w^{\tiny{\textcircled{\#}}} a=a_w^{\tiny{\textcircled{\#}}} b$, as required.
\hfill$\Box$
\end{proof}

Combining with Theorems \ref{w-core left star right sharp} and \ref{w-core and core}, we have the following result.

\begin{theorem} For any $a,b,w\in R$ and $a\in R_w^{\tiny{\textcircled{\#}}}$, if $w\in U(R)$, then the following conditions are equivalent{\rm :}

\emph{(i)} $a\overset{\tiny{\textcircled{\#}}}\leq_w b$.

\emph{(ii)} $aw\overset{\tiny{\textcircled{\#}}}\leq bw$.

\emph{(iii)} $a~*\leq b$ and $wa\leq_\#wb$.
\end{theorem}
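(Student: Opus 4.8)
The plan is to prove this by directly chaining the two equivalences already established earlier in this section, since both of the prior theorems operate under exactly the same hypotheses $a\in R_w^{\tiny{\textcircled{\#}}}$ and $w\in U(R)$. There is nothing new to compute: the content is the observation that conditions (i), (ii), (iii) have all been shown equivalent to (i) separately, hence are mutually equivalent.

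Concretely, I would first invoke Theorem \ref{w-core and core}, which under the standing assumption $w\in U(R)$ establishes the equivalence $a\overset{\tiny{\textcircled{\#}}}\leq_w b \Leftrightarrow aw\overset{\tiny{\textcircled{\#}}}\leq bw$; this is precisely (i) $\Leftrightarrow$ (ii). Next I would invoke Theorem \ref{w-core left star right sharp}, which again under $w\in U(R)$ gives $a\overset{\tiny{\textcircled{\#}}}\leq_w b \Leftrightarrow \bigl(a~*\leq b \text{ and } wa\leq_\# wb\bigr)$; this is (i) $\Leftrightarrow$ (iii). Combining the two yields (i) $\Leftrightarrow$ (ii) $\Leftrightarrow$ (iii), completing the argument.

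The only point that requires care is a bookkeeping check that the hypotheses genuinely match across the two cited theorems: both require $a\in R_w^{\tiny{\textcircled{\#}}}$ and the unit condition $w\in U(R)$, and neither imposes any extra assumption on $b$, so no hypothesis is lost when the equivalences are concatenated. There is no real obstacle here—the substantive work was done in proving Theorems \ref{w-core left star right sharp} and \ref{w-core and core}, where the unit (or left-invertibility) of $w$ was used to pass between $wa=wb(wa)^\#wa$ and $a=bww^{\parallel a}$, and where Lemma \ref{Zhulemma} supplied $a_w^{\tiny{\textcircled{\#}}}=(aw)^{\tiny{\textcircled{\#}}}$. Accordingly, I expect the proof to consist of a single short paragraph citing these two theorems, and I would write it as such rather than reproducing any of their computations.
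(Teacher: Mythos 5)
Your proposal is correct and is essentially identical to the paper's treatment: the paper gives no separate proof at all, introducing the theorem only with the sentence ``Combining with Theorems \ref{w-core left star right sharp} and \ref{w-core and core}, we have the following result,'' which is exactly your chaining of (i) $\Leftrightarrow$ (iii) and (i) $\Leftrightarrow$ (ii). Your hypothesis bookkeeping is also accurate, since both cited theorems assume precisely $a\in R_w^{\tiny{\textcircled{\#}}}$ and $w\in U(R)$ with no extra condition on $b$.
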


We next show that star partial order $\overset{*}\leq$ and the core partial order $\overset{\tiny{\textcircled{\#}}}\leq$ are instances of the $w$-core partial order $\overset{\tiny{\textcircled{\#}}}\leq_w$.

\begin{theorem} \label{three class partial orders} Let $a,b\in R$. Then we have

\emph{(i)} If $a\in R^{\tiny{\textcircled{\#}}}$, then $a\overset{\tiny{\textcircled{\#}}}\leq b$ if and only if $a\overset{\tiny{\textcircled{\#}}}\leq_a b$ if and only if $a\overset{\tiny{\textcircled{\#}}}\leq_1 b$.

\emph{(ii)} If $a\in R^\dag$, then $a\overset{*}\leq b$ if and only if $a\overset{\tiny{\textcircled{\#}}}\leq_{a^*} b$.
\end{theorem}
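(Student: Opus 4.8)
The plan is to prove both parts by reducing the $w$-core partial order to its equational form and comparing equations, rather than manipulating the inverses $a_w^{\tiny{\textcircled{\#}}}$ directly. By Theorem \ref{char w core}(iii), for $a\in R_w^{\tiny{\textcircled{\#}}}$ the relation $a\overset{\tiny{\textcircled{\#}}}\leq_w b$ is equivalent to the pair $a^*a=a^*b$ and $bwa=awa$; the core and star orders admit the parallel descriptions $a^*a=a^*b,\ ba=a^2$ (Corollary \ref{char core}(iii)) and $a^*a=a^*b,\ ba^*=aa^*$ (the definition of $\overset{*}\leq$). Since all three share the common equation $a^*a=a^*b$, the entire problem collapses to comparing the respective second equations.

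Before invoking Theorem \ref{char w core} I would record the membership hypotheses. In part (i), $a\in R^{\tiny{\textcircled{\#}}}$ forces $a\in R^\#\cap R^{(1,3)}$, and the equivalences $a\in R^\#\Leftrightarrow a\in R^{\parallel a}\Leftrightarrow 1\in R^{\parallel a}$ recalled in Section 1 give both $1\in R^{\parallel a}$ and $a\in R^{\parallel a}$; hence, by the criterion that $a\in R_w^{\tiny{\textcircled{\#}}}$ exactly when $w\in R^{\parallel a}$ and $a\in R^{(1,3)}$, we get $a\in R_1^{\tiny{\textcircled{\#}}}$ and $a\in R_a^{\tiny{\textcircled{\#}}}$. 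In part (ii), $a\in R^\dag$ gives $a\in R^{(1,3)}$ and, by the recalled equivalence $a\in R^\dag\Leftrightarrow(a^*)^{\parallel a}$ exists, also $a^*\in R^{\parallel a}$; thus $a\in R_{a^*}^{\tiny{\textcircled{\#}}}$. This licenses Theorem \ref{char w core}(iii) for $w\in\{1,a,a^*\}$.

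For part (i) the case $w=1$ is essentially definitional: the equation $bwa=awa$ becomes $ba=a^2$, so Theorem \ref{char w core}(iii) reads verbatim as Corollary \ref{char core}(iii), and $a\overset{\tiny{\textcircled{\#}}}\leq_1 b$ if and only if $a\overset{\tiny{\textcircled{\#}}}\leq b$. For $w=a$ the equation becomes $ba^2=a^3$, so it remains to show $ba^2=a^3$ if and only if $ba=a^2$ when $a\in R^\#$. The implication $ba=a^2\Rightarrow ba^2=a^3$ is immediate; for the converse I would post-multiply $ba^2=a^3$ by $a^\#$ and use the group-inverse identities $a^2a^\#=a$ and $a^3a^\#=a^2$ to recover $ba=a^2$.

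For part (ii), Theorem \ref{char w core}(iii) with $w=a^*$ gives $a^*a=a^*b$ together with $ba^*a=aa^*a$, while $a\overset{*}\leq b$ is $a^*a=a^*b$ together with $ba^*=aa^*$; so everything reduces to the equivalence $ba^*a=aa^*a$ if and only if $ba^*=aa^*$. The direction $ba^*=aa^*\Rightarrow ba^*a=aa^*a$ follows by right-multiplication by $a$. The reverse direction is the crux and the step I expect to be the main obstacle: here I would post-multiply $aa^*a=ba^*a$ by $a^\dag$ and use the Moore-Penrose identity $a^*aa^\dag=a^*$ — which itself follows from $(aa^\dag)^*=aa^\dag$ and $aa^\dag a=a$ — to cancel the trailing $a$ and conclude $aa^*=ba^*$. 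It is precisely this cancellation that requires $a\in R^\dag$ rather than mere regularity, so the argument would fail without the Moore-Penrose hypothesis.
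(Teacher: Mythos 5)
Your proof is correct, but it follows a genuinely different route from the paper's. The paper works directly with explicit formulas for the relevant inverses: for (i) it writes $a^{\tiny{\textcircled{\#}}}=a^\#aa^{(1,3)}$ and $a_a^{\tiny{\textcircled{\#}}}=a^\#a^{(1,3)}$ and converts the defining equations of one order into those of the other by hand computation; for (ii) it computes $a_{a^*}^{\tiny{\textcircled{\#}}}=(a^*)^{\parallel a}a^{(1,3)}=(a^\dag)^*a^\dag$ and then verifies the four defining equalities term by term, using identities such as $a(a^\dag a)^*a^\dag=aa^\dag$. You instead route everything through the equational characterization of Theorem \ref{char w core}(iii) (resp.\ Corollary \ref{char core}(iii) and the definition of $\overset{*}\leq$), so that after the shared equation $a^*a=a^*b$ is split off, each equivalence reduces to a one-line cancellation: $ba^2=a^3\Leftrightarrow ba=a^2$ via post-multiplication by $a^\#$ (using $a^2a^\#=a$, $a^3a^\#=a^2$), and $ba^*a=aa^*a\Leftrightarrow ba^*=aa^*$ via post-multiplication by $a^\dag$ (using $a^*aa^\dag=a^*$). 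Your membership verifications ($a\in R_1^{\tiny{\textcircled{\#}}}\cap R_a^{\tiny{\textcircled{\#}}}$ from $a\in R^\#\cap R^{(1,3)}$, and $a\in R_{a^*}^{\tiny{\textcircled{\#}}}$ from $a^*\in R^{\parallel a}$ and $a\in R^{(1,3)}$) are exactly what is needed to invoke those characterizations, and there is no circularity since Theorem \ref{char w core} and its corollary are established before this theorem. What each approach buys: the paper's computation is self-contained and exhibits the useful explicit identities $a_a^{\tiny{\textcircled{\#}}}=a^\#a^{(1,3)}$ and $a_{a^*}^{\tiny{\textcircled{\#}}}=(a^\dag)^*a^\dag$, which are of independent interest; your argument is shorter and more modular, reuses the machinery of Section 2, and isolates precisely where each invertibility hypothesis enters (group-inverse cancellation in (i), Moore--Penrose cancellation in (ii)).
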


\begin{proof}

(i)  It is known that  $a\in R_a^{\tiny{\textcircled{\#}}}$ if and only if  $a\in R^{\tiny{\textcircled{\#}}}$ if and only if $a\in R_1^{\tiny{\textcircled{\#}}}$. We first show that $a\overset{\tiny{\textcircled{\#}}}\leq b$ if and only if $a\overset{\tiny{\textcircled{\#}}}\leq_a b$.

Suppose $a\overset{\tiny{\textcircled{\#}}}\leq b$, i.e., $a^{\tiny{\textcircled{\#}}}a=a^{\tiny{\textcircled{\#}}}b$ and $aa^{\tiny{\textcircled{\#}}}=ba^{\tiny{\textcircled{\#}}}$. Then $a^\#a=a^\#aa^{(1,3)}b$ and $aa^{(1,3)}=ba^\#aa^{(1,3)}$, and consequently $a_a^{\tiny{\textcircled{\#}}}a=a^\#a^{(1,3)}a=(a^\#)^2aa^{(1,3)}a=a^\#a^\#a=a^\#a^\#aa^{(1,3)}b=a^\#a^{(1,3)}b=a_a^{\tiny{\textcircled{\#}}}b$. Similarly, $a^2a_a^{\tiny{\textcircled{\#}}}=a^2a^\#a^{(1,3)}=aa^{(1,3)}=ba^\#aa^{(1,3)}=baa^\#a^{(1,3)}=baa_a^{\tiny{\textcircled{\#}}}$.

Conversely, if $a\overset{\tiny{\textcircled{\#}}}\leq_a b$, then $a_a^{\tiny{\textcircled{\#}}}a=a_a^{\tiny{\textcircled{\#}}}b$ and $a^2a_a^{\tiny{\textcircled{\#}}}=baa_a^{\tiny{\textcircled{\#}}}$, that is, $a^\#a^{(1,3)}a=a^\#a^{(1,3)}b$ and $a^2a^\#a^{(1,3)}=baa^\#a^{(1,3)}$. So, $a^{\tiny{\textcircled{\#}}}a=a^\#aa^{(1,3)}a=aa^\#a^{(1,3)}a=aa^\#a^{(1,3)}b=a^{\tiny{\textcircled{\#}}}b$ and $aa^{\tiny{\textcircled{\#}}}=aa^{(1,3)}=a^2a^\#a^{(1,3)}=baa^\#a^{(1,3)}=ba^{\tiny{\textcircled{\#}}}$. So, $a\overset{\tiny{\textcircled{\#}}}\leq b$.

Note that $a^{\tiny{\textcircled{\#}}}=a_1^{\tiny{\textcircled{\#}}}$. Then $a\overset{\tiny{\textcircled{\#}}}\leq b$ if and only if $a\overset{\tiny{\textcircled{\#}}}\leq_1 b$. So, the result follows.

(ii) One knows that $a\in R^\dag$ if and only if $(a^*)^{\parallel a}$ exists if and only if $a\in R_{a^*}^{\tiny{\textcircled{\#}}}$. Moreover, $a_{a^*}^{\tiny{\textcircled{\#}}}=(a^*)^{\parallel a}a^{(1,3)}=(a^\dag)^*a^{(1,3)}=(a^\dag)^*a^\dag$. It is also known that, for any $a\in R^\dag$, $a\overset{*}\leq b$ if and only if $a^\dag a=a^\dag b$ and $aa^\dag=ba^\dag$. We next only show that $a\overset{\tiny{\textcircled{\#}}}\leq_{a^*} b$ if and only if $a^\dag a=a^\dag b$ and $aa^\dag=ba^\dag$.

Suppose $a^\dag a=a^\dag b$ and $aa^\dag=ba^\dag$. We hence have $a_{a^*}^{\tiny{\textcircled{\#}}}a=(a^\dag)^*a^\dag a=(a^\dag)^*a^\dag b=a_{a^*}^{\tiny{\textcircled{\#}}}b$, and $aa^*a_{a^*}^{\tiny{\textcircled{\#}}}=aa^*(a^\dag)^*a^\dag=a(a^\dag a)^*a^\dag=aa^\dag=ba^\dag=ba^*(a^\dag)^*a^\dag=ba^*a_{a^*}^{\tiny{\textcircled{\#}}}$.

For the converse part, given $a_{a^*}^{\tiny{\textcircled{\#}}}a=a_{a^*}^{\tiny{\textcircled{\#}}}b$ and $aa^*a_{a^*}^{\tiny{\textcircled{\#}}}=ba^*a_{a^*}^{\tiny{\textcircled{\#}}}$, i.e., $(a^\dag)^*a^\dag a=(a^\dag)^*a^\dag b$ and $aa^*(a^\dag)^*a^\dag=ba^*(a^\dag)^*a^\dag$, then $a^\dag a=a^\dag aa^\dag a=(a^\dag a)^*a^\dag a=a^*(a^\dag)^*a^\dag a=a^*(a^\dag)^*a^\dag b=(a^\dag a)^*a^\dag b=a^\dag b$. Similarly, $aa^\dag=a(a^\dag a)^*a^\dag=aa^*(a^\dag)^*a^\dag=ba^*(a^\dag)^*a^\dag=b(a^\dag a)^*a^\dag=ba^\dag$, as required.
\hfill$\Box$
\end{proof}

An element $a\in R$ is called EP if $a\in R^\#\cap R^\dag$ and $a^\#=a^\dag$. A well known characterization for EP elements is that $a$ is EP if and only if $a\in R^\dag$ and $aa^\dag=a^\dag a$. According to Theorem \ref{three class partial orders}, we have the following result, of which (iii) $\Leftrightarrow$ (iv) $\Leftrightarrow$ (v) were essentially given in \cite{Baksalary2010}.

\begin{theorem}  Let $a,b\in R$. If $a$ is EP, then the following conditions are equivalent{\rm :}

\emph{(i)} $a\overset{\tiny{\textcircled{\#}}}\leq_a b$.

\emph{(ii)} $a\overset{\tiny{\textcircled{\#}}}\leq_{a^*} b$.

\emph{(iii)} $a\overset{\tiny{\textcircled{\#}}}\leq b$.

\emph{(iv)} $a\overset{*}\leq b$.

\emph{(v)} $a\overset{\#}\leq b$.
\end{theorem}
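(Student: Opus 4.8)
The plan is to reduce the whole chain of equivalences to Theorem \ref{three class partial orders}, supplemented by the single observation that, for an EP element, the core inverse, the group inverse and the Moore--Penrose inverse all coincide. First I would check that the hypotheses of both parts of Theorem \ref{three class partial orders} are automatically met. Since $a$ is EP we have $a\in R^\#\cap R^\dag$; as every Moore--Penrose inverse is in particular a $\{1,3\}$-inverse, $R^\dag\subseteq R^{(1,3)}$, so $a\in R^\#\cap R^{(1,3)}$, and the criterion $R^{\tiny{\textcircled{\#}}}=R^\#\cap R^{(1,3)}$ of \cite{Xu2017} gives $a\in R^{\tiny{\textcircled{\#}}}$; trivially $a\in R^\dag$. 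Hence Theorem \ref{three class partial orders}(i) yields (i) $\Leftrightarrow$ (iii), and Theorem \ref{three class partial orders}(ii) yields (ii) $\Leftrightarrow$ (iv). This already ties the two $w$-core instances to the core and the star partial orders, so it remains only to link (iii), (iv), (v) to one another.

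The key computational step I would carry out next is to show $a^{\tiny{\textcircled{\#}}}=a^\#=a^\dag$ when $a$ is EP. The equality $a^\#=a^\dag$ is the defining property of an EP element. For the core inverse I would use the expression $a^{\tiny{\textcircled{\#}}}=a^\#aa^{(1,3)}$ and take $a^\dag$ as the chosen $\{1,3\}$-inverse, which gives $a^{\tiny{\textcircled{\#}}}=a^\#aa^\dag=a^\dag aa^\dag=a^\dag$ by the EP relation $a^\#=a^\dag$ together with the Moore--Penrose identity $a^\dag aa^\dag=a^\dag$. Thus all three generalized inverses collapse to a common element $c:=a^{\tiny{\textcircled{\#}}}=a^\#=a^\dag$.

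With this coincidence in hand the remaining equivalences are immediate. Each of the core, sharp and star partial orders asserts exactly the pair $ca=cb$ and $ac=bc$: for the sharp order this is the definition with $a^\#=c$; for the core order it is the definition with $a^{\tiny{\textcircled{\#}}}=c$; and for the star order I would first pass to the $a\in R^\dag$ reformulation $a^\dag a=a^\dag b$, $aa^\dag=ba^\dag$ recorded in the definition of $\overset{*}\leq$, which is again $ca=cb$, $ac=bc$. Hence (iii), (iv), (v) are literally the same system of equations, so they are equivalent; chaining this with (i) $\Leftrightarrow$ (iii) and (ii) $\Leftrightarrow$ (iv) from the first paragraph closes the cycle among (i)--(v).

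I do not expect a genuine obstacle: the argument is essentially a bookkeeping of earlier results. The only point that needs care is the verification $a^{\tiny{\textcircled{\#}}}=a^\dag$, which depends on selecting $a^\dag$ as the $\{1,3\}$-inverse in the core-inverse formula and invoking EP-ness, and the parallel observation that the star order must be written in its $a^\dag$-form before it visibly matches the core and sharp orders. Once the three inverses are identified, everything else is formal.
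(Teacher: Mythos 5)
Your proof is correct. Its first half follows the same route as the paper: the theorem is stated there without a written proof, prefaced only by the remark that it follows from Theorem \ref{three class partial orders}, and your (i) $\Leftrightarrow$ (iii), (ii) $\Leftrightarrow$ (iv) is exactly that reduction, with the useful extra verification that EP-ness puts $a$ in $R^{\tiny{\textcircled{\#}}}$ (via $R^\dag\subseteq R^{(1,3)}$ and $R^{\tiny{\textcircled{\#}}}=R^\#\cap R^{(1,3)}$) and in $R^\dag$, so both parts of that theorem apply. Where you genuinely diverge is on (iii) $\Leftrightarrow$ (iv) $\Leftrightarrow$ (v): the paper delegates this block to \cite{Baksalary2010}, a reference set in the complex-matrix case, while you prove it inside the $*$-ring by establishing $a^{\tiny{\textcircled{\#}}}=a^\#=a^\dag$ and observing that the core, sharp and star orders then assert the identical system $ca=cb$, $ac=bc$ (the star order taken in its $a^\dag$-form, which the paper's definition (4) permits precisely because $a\in R^\dag$). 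Your key computation is sound: $a^\dag$ is a legitimate choice of $\{1,3\}$-inverse, uniqueness of the core inverse makes the expression $a^\#aa^{(1,3)}$ independent of that choice, and EP-ness gives $a^\#aa^\dag=a^\dag aa^\dag=a^\dag$. What your route buys is a self-contained argument valid in any unital $*$-ring, matching the stated generality of the theorem; what the paper's route buys is brevity, at the cost of resting the ring-level equivalences (iii)--(v) on a matrix-level citation.
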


Given any $a,b\in R$ with $a\in R_w^{\tiny{\textcircled{\#}}}$, the $w$-core partial order gives the diamond partial order, i.e., $a\overset{\tiny{\textcircled{\#}}}\leq_w b \Rightarrow a\overset{\diamond}\leq b$. Indeed, given $a\overset{\tiny{\textcircled{\#}}}\leq_w b$, by Theorem \ref{char w core}, we have $a^*a=a^*b=b^*a$ and hence $aa^*a=ab^*a$. Moreover, $a=bwa_w^{\tiny\textcircled{\tiny{\#}}}a=awa_w^{\tiny\textcircled{\tiny{\#}}}b$ imply $aR \subseteq bR$ and $Ra\subseteq Rb$, respectively.

For any $a,b,w\in R$ and $a\in R^{\tiny{\textcircled{\#}}} \cap R_w^{\tiny{\textcircled{\#}}}$, we claim that the $w$-core partial order is between the core partial order and the diamond partial order, that is, $a\overset{\tiny{\textcircled{\#}}}\leq b \Rightarrow a\overset{\tiny{\textcircled{\#}}}\leq_w b \Rightarrow a\overset{\diamond}\leq b$. However, the converse implications may not be true in general, i.e., $ a\overset{\diamond}\leq b \nRightarrow a\overset{\tiny{\textcircled{\#}}}\leq_w b \nRightarrow a\overset{\tiny{\textcircled{\#}}}\leq b$ (see Examples \ref{ex1} and \ref{ex2}).

The equivalences between $a\overset{x}\leq b$ and $b-a\overset{x}\leq b$ were considered by several scholars \cite{Ferreyra2020,Marovt2016,Mitra1987}, where $\overset{x}\leq$ denotes the minus partial order $\overset{-}\leq$, the star partial order $\overset{*}\leq$ or the sharp partial order $\overset{\#}\leq$. The characterization fails to hold for the core partial order $\overset{\tiny{\textcircled{\#}}}\leq$ (see, e.g., \cite[p. 695]{Baksalary2010}). Recently, Ferreyra and Malik \cite[Theorem 4.2]{Ferreyra2020} derived the equivalence between  $A\overset{\tiny{\textcircled{\#}}}\leq B$ and $B-A\overset{\tiny{\textcircled{\#}}}\leq B$ in the ring of all $n\times n$ complex matrices, under certain conditions. More precisely, if $A$, $B$ and $B-A$ are group invertible complex matrices of $n$ by $n$ size, then $A\overset{\tiny{\textcircled{\#}}}\leq B$ and $AB=BA$ if and only if $B-A\overset{\tiny{\textcircled{\#}}}\leq B$ and $AB=BA$ if and only if $A~*\leq B$ and $A\overset{\#}\leq B$.

We next give a similar characterization for the $w$-core inverse by a pure algebraic method in a $*$-ring.

\begin{theorem} \label{difference w-core} For any $a,b,w\in R$ with $awb=bwa$, let $a,b\in R_w^{\tiny{\textcircled{\#}}}$ such that $a-b\in R_w^{\tiny{\textcircled{\#}}}$. Then the following conditions are equivalent{\rm:}

\emph{(i)} $a\overset{\tiny{\textcircled{\#}}}\leq_w b$.

\emph{(ii)} $b-a\overset{\tiny{\textcircled{\#}}}\leq_w b$.

\emph{(iii)} $a~*\leq b$ and $wa\overset{\#}\leq wb$.
\end{theorem}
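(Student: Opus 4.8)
The plan is to reduce all three statements to the multiplicative criterion of Theorem \ref{char w core}, namely that for $x\in R_w^{\tiny{\textcircled{\#}}}$ one has $x\overset{\tiny{\textcircled{\#}}}\leq_w y$ if and only if $x^*x=x^*y$ and $ywx=xwx$, and then to let the commutativity hypothesis $awb=bwa$ make the three criteria interlock. First I would record that statement (ii) is well posed: since $b-a=-(a-b)$ and $w$-core invertibility is stable under negation (with $(b-a)_w^{\tiny{\textcircled{\#}}}=-(a-b)_w^{\tiny{\textcircled{\#}}}$), the hypothesis $a-b\in R_w^{\tiny{\textcircled{\#}}}$ gives $b-a\in R_w^{\tiny{\textcircled{\#}}}$, so Theorem \ref{char w core} applies to $b-a\overset{\tiny{\textcircled{\#}}}\leq_w b$ as well.

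For (i) $\Leftrightarrow$ (ii) I would simply expand the two defining identities coming from Theorem \ref{char w core} applied to $b-a$. Expanding $(b-a)^*(b-a)=(b-a)^*b$ gives $(b-a)^*a=0$, i.e. $b^*a=a^*a$, whose adjoint is $a^*a=a^*b$; this is exactly the star part of (i). Expanding $(b-a)w(b-a)=bw(b-a)$ and cancelling the common summand $bw(b-a)$ leaves $aw(b-a)=0$, i.e. $awa=awb$, and since $awb=bwa$ by hypothesis this is precisely $awa=bwa$, the second part of (i). Hence both (i) and (ii) are equivalent to the single pair $\{a^*a=a^*b,\ awa=bwa\}$, and the equivalence is immediate.

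For (i) $\Rightarrow$ (iii): Proposition \ref{w-core imply} already delivers $a~*\leq b$, so it only remains to upgrade the right sharp order it provides to the full order $wa\overset{\#}\leq wb$. The right equation $wa(wa)^\#=wb(wa)^\#$ is already in Proposition \ref{w-core imply}. For the left equation I would use $awa=bwa=awb$ (the second coming from the hypothesis), which yields $aw(a-b)=0$, hence $(wa)\,w(a-b)=w\bigl(aw(a-b)\bigr)=0$; multiplying by $((wa)^\#)^2$ on the left and using $(wa)^\#=((wa)^\#)^2wa$ gives $(wa)^\#w(a-b)=0$, i.e. $(wa)^\#wa=(wa)^\#wb$. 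Since $wa\in R^\#$ by Lemma \ref{group result}, these two equations are exactly $wa\overset{\#}\leq wb$.

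The decisive direction is (iii) $\Rightarrow$ (i), and the key device is Theorem \ref{projection1}: the condition $a^*a=a^*b$ forces $a=awa_w^{\tiny{\textcircled{\#}}}b=pb$, where $p:=awa_w^{\tiny{\textcircled{\#}}}$ is a projection with $pa=a$. Then a short computation using the hypothesis twice, namely $awa=(pb)wa=p(bwa)=p(awb)=(pa)wb=awb=bwa$, produces $awa=bwa$; together with $a^*a=a^*b$ this gives (i) through Theorem \ref{char w core}. I expect the main obstacle to be precisely this maneuver: recognizing that once the Hermitian condition $a^*a=a^*b$ is converted, via Theorem \ref{projection1}, into the purely multiplicative identity $a=pb$ with $p$ a projection fixing $a$ on the left, the commutativity $awb=bwa$ by itself already yields $awa=bwa$ (so that neither $aR\subseteq bR$ nor the sharp order is actually needed in this direction). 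Beyond that, the only real care is the bookkeeping in (ii), ensuring the expanded identities collapse to exactly the pair characterizing (i).
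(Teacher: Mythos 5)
Your proof is correct, but in the decisive direction it takes a genuinely different route from the paper's. The equivalence (i) $\Leftrightarrow$ (ii) is in substance the paper's own computation: both reduce to the pair $a^*a=a^*b$, $awa=bwa$ via Theorem \ref{char w core} and the hypothesis $awb=bwa$, and your explicit remark that $(b-a)_w^{\tiny{\textcircled{\#}}}=-(a-b)_w^{\tiny{\textcircled{\#}}}$ nicely fills in a point the paper leaves tacit. For (i) $\Rightarrow$ (iii) you recycle Proposition \ref{w-core imply} and only supplement the missing equation $(wa)^{\#}wa=(wa)^{\#}wb$ from $aw(a-b)=0$, whereas the paper rederives both sharp-order equations from $wawa=wawb=wbwa$; that is a cosmetic difference. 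The real divergence is (iii) $\Rightarrow$ (i): the paper uses the sharp-order half of (iii), extracting $(wa)^2=wawb$ from $(wa)^{\#}wa=(wa)^{\#}wb$ and pre-multiplying by $w^{\parallel a}$ to get $awa=awb$; you instead convert $a^*a=a^*b$ into $a=pb$ with $p=awa_w^{\tiny{\textcircled{\#}}}$ via Theorem \ref{projection1}, and then use the commutativity hypothesis twice, $awa=p(bwa)=p(awb)=(pa)wb=awb=bwa$. Your argument never touches $wa\overset{\#}\leq wb$, so it in fact proves a slightly stronger statement: under $awb=bwa$ and $a\in R_w^{\tiny{\textcircled{\#}}}$, the condition $a^*a=a^*b$ alone (not even all of $a~*\leq b$) already implies $a\overset{\tiny{\textcircled{\#}}}\leq_w b$. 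The paper's route keeps the two halves of (iii) on an equal footing and stays closer to the sharp-order machinery; yours exposes a redundancy in (iii) that the paper's proof does not reveal.
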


\begin{proof}
(i) $\Rightarrow$ (ii) Suppose $a\overset{\tiny{\textcircled{\#}}}\leq_w b$. Then, by Theorem \ref{char w core}, $a^*a=a^*b$ and $bwa=awa$. To show $b-a\overset{\tiny{\textcircled{\#}}}\leq_w b$, it suffices to prove $(b-a)^*(b-a)=(b-a)^*b$ and $(b-a)w(b-a)=bw(b-a)$. As $(b-a)^*a=0$, then $(b-a)^*(b-a)=(b-a)^*b-(b-a)^*a=(b-a)^*b$. Similarly, we get $(b-a)w(b-a)=bw(b-a)-aw(b-a)=bw(b-a)$.

(ii) $\Rightarrow$ (iii) Given $b-a\overset{\tiny{\textcircled{\#}}}\leq_w b$, then $(b-a)^*(b-a)=(b-a)^*b$, which implies $(b-a)^*a=0$ and so $a^*a=a^*b$. Also, $(b-a)w(b-a)=bw(b-a)$ gives $awb=awa$ and $bwa=awa$. Post-multiplying $bwa=awa$ by $w(a_w^{\tiny{\textcircled{\#}}})^2$ gives $bwa_w^{\tiny{\textcircled{\#}}}=awa_w^{\tiny{\textcircled{\#}}}$. Hence, $a=awa_w^{\tiny{\textcircled{\#}}}a=bwa_w^{\tiny{\textcircled{\#}}}a\in bR$ and $a~*\leq b$. Again, it follows from $awa=awb=bwa$ that $wawa=wawb=wbwa$. We hence have $(wa)^\#wa=(wa)^\#wb$ and $wa(wa)^\#=wb(wa)^\#$. So, $wa\overset{\#}\leq wb$.

(iii) $\Rightarrow$ (i) Note that $a~*\leq b$ implies $a^*a=a^*b$. To prove $a\overset{\tiny{\textcircled{\#}}}\leq_w b$, we only need to verify that $bwa=awa$ by Theorem \ref{char w core}. Since $wa\overset{\#}\leq wb$, $(wa)^\#wa=(wa)^\#wb$ and $(wa)^2=wawb$. Pre-multiplying $(wa)^2=wawb$ by $w^{\parallel a}$ implies $awa=(w^{\parallel a}wa)wa=(w^{\parallel a}wa) wb=awb$, which together with $awb=bwa$ to guarantee $bwa=awa$, as required.
\hfill$\Box$
\end{proof}

\begin{remark} \label{rmk2} {\rm It should be noted that, in Theorem \ref{difference w-core} above, the condition (iii) cannot imply the condition (i) without the assumption $awb=bwa$ in general. Indeed, Example \ref{ex2} can illustrate this fact. However, for the case of $w=1$, the implication (iii) $\Rightarrow$ (i) is clear by the fact that $a\overset{\#}\leq b$ gives $a^2=ab=ba$. }
\end{remark}

As is given in Theorem \ref{three class partial orders} above, for any $a\in R$, $a\in R_a^{\tiny{\textcircled{\#}}}$ if and only if  $a\in R^{\tiny{\textcircled{\#}}}$ if and only if  $a\in R_1^{\tiny{\textcircled{\#}}}$. In terms of Theorem \ref{three class partial orders} and Remark \ref{rmk2}, we get the characterization for the core partial order $a\overset{\tiny{\textcircled{\#}}}\leq b$.

\begin{corollary} For any $a,b\in R$, let $a,b\in R^{\tiny{\textcircled{\#}}}$ such that $a-b\in R^{\tiny{\textcircled{\#}}}$. Then the following conditions are equivalent{\rm:}

\emph{(i)} $a\overset{\tiny{\textcircled{\#}}}\leq b$ and $ab=ba$.

\emph{(ii)} $b-a\overset{\tiny{\textcircled{\#}}}\leq b$  and $ab=ba$.

\emph{(iii)} $a~*\leq b$ and $a\overset{\#}\leq b$.
\end{corollary}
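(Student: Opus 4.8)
The plan is to recognise this corollary as the specialisation of Theorem \ref{difference w-core} to the case $w=1$, using Theorem \ref{three class partial orders}(i) to replace the $1$-core partial order by the ordinary core partial order. Indeed, taking $w=1$ turns the hypothesis $awb=bwa$ into $ab=ba$, the condition $wa\overset{\#}\leq wb$ into $a\overset{\#}\leq b$, and, since $a\in R^{\tiny{\textcircled{\#}}}$ if and only if $a\in R_1^{\tiny{\textcircled{\#}}}$ with $a_1^{\tiny{\textcircled{\#}}}=a^{\tiny{\textcircled{\#}}}$, the relations $a\overset{\tiny{\textcircled{\#}}}\leq_1 b$ and $b-a\overset{\tiny{\textcircled{\#}}}\leq_1 b$ into $a\overset{\tiny{\textcircled{\#}}}\leq b$ and $b-a\overset{\tiny{\textcircled{\#}}}\leq b$, respectively.

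First I would check that the invertibility hypotheses of Theorem \ref{difference w-core} are met: by Theorem \ref{three class partial orders}(i) we have $a,b\in R^{\tiny{\textcircled{\#}}}=R_1^{\tiny{\textcircled{\#}}}$, and since $b-a=-(a-b)$ has core inverse $-(a-b)^{\tiny{\textcircled{\#}}}$ whenever $a-b\in R^{\tiny{\textcircled{\#}}}$, the element $b-a$ also lies in $R^{\tiny{\textcircled{\#}}}=R_1^{\tiny{\textcircled{\#}}}$. Granting the standing assumption $ab=ba$, Theorem \ref{difference w-core} then gives at once the equivalence of $a\overset{\tiny{\textcircled{\#}}}\leq b$ and $b-a\overset{\tiny{\textcircled{\#}}}\leq b$, which is exactly (i) $\Leftrightarrow$ (ii); and the implication (i) $\Rightarrow$ (iii) is the specialisation of its (i) $\Rightarrow$ (iii).

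The one point that needs care, and the main obstacle, is the implication (iii) $\Rightarrow$ (i): condition (iii) does not list the commutativity $ab=ba$ explicitly, so Theorem \ref{difference w-core} cannot be invoked verbatim. The plan is to recover $ab=ba$ from the sharp part of (iii). As recorded in Remark \ref{rmk2}, $a\overset{\#}\leq b$ forces $a^2=ab=ba$: from $a^\#a=a^\#b$ one left-multiplies by $a^2$ and uses $a^2a^\#=a$ to get $a^2=ab$, while from $aa^\#=ba^\#$ one right-multiplies by $a^2$ and uses $a^\#a^2=a$ to get $a^2=ba$. Once $ab=ba$ is in hand, the hypothesis $awb=bwa$ of Theorem \ref{difference w-core} with $w=1$ holds, its implication (iii) $\Rightarrow$ (i) yields $a\overset{\tiny{\textcircled{\#}}}\leq b$, and together with the commutativity just derived this is precisely condition (i). This closes the cycle and completes the proof.
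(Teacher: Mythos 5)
Your proposal is correct and follows exactly the paper's intended route: the paper derives this corollary by specializing Theorem \ref{difference w-core} to $w=1$, identifying $\overset{\tiny{\textcircled{\#}}}\leq_1$ with the core partial order via Theorem \ref{three class partial orders}(i), and invoking Remark \ref{rmk2}'s observation that $a\overset{\#}\leq b$ forces $a^2=ab=ba$, which is precisely how you recover the commutativity needed for (iii) $\Rightarrow$ (i). Your explicit verifications (core invertibility of $b-a$ and the computation giving $a^2=ab=ba$) just spell out details the paper leaves implicit.
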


Set $w=a$ in Theorem \ref{difference w-core}, another characterization for the core partial order $a\overset{\tiny{\textcircled{\#}}}\leq b$ can be obtained as follows.

\begin{corollary} For any $a,b\in R$ with $a^2b=ba^2$, let $a,b\in R^{\tiny{\textcircled{\#}}}$ such that $a-b\in R^{\tiny{\textcircled{\#}}}$. Then the following conditions are equivalent{\rm:}

\emph{(i)} $a\overset{\tiny{\textcircled{\#}}}\leq b$.

\emph{(ii)} $b-a\overset{\tiny{\textcircled{\#}}}\leq b$.

\emph{(iii)} $a~*\leq b$ and $a^2\overset{\#}\leq ab$.

\end{corollary}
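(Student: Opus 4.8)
The plan is to read the statement as the $w=a$ specialization of Theorem~\ref{difference w-core}: there the hypothesis $awb=bwa$ becomes $a^2b=ba^2$, condition (iii) becomes $a~*\leq b$ together with $a\cdot a\overset{\#}\leq a\cdot b$, i.e. $a^2\overset{\#}\leq ab$, and by Theorem~\ref{three class partial orders}(i) the $a$-core order of $a$ below $b$ is the same as the core order $a\overset{\tiny{\textcircled{\#}}}\leq b$. To avoid checking the auxiliary memberships $b,a-b\in R_a^{\tiny{\textcircled{\#}}}$ that a literal appeal to Theorem~\ref{difference w-core} would demand, I would instead run the whole argument through the core-order characterization in Corollary~\ref{char core}(iii), which asks only the smaller element to be core invertible.

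The engine of the proof is the observation that, since $a\in R^{\tiny{\textcircled{\#}}}\subseteq R^\#$ and $a^2b=ba^2$, the identities $ba=a^2$, $ab=a^2$ and $a^2b=a^3$ are mutually equivalent. Indeed $ab=a^2$ and $ba=a^2$ each give $a^2b=a^3$ at once (the latter using $a^2b=ba^2$), while the converse $a^2b=a^3\Rightarrow ab=a^2$ (and, via $ba^2=a^3$, also $\Rightarrow ba=a^2$) is got by cancelling with the group inverse through $a^\#a^2=a$ and $a^2a^\#=a$. This group-inverse cancellation is the one genuinely delicate computation, and I expect it to be the main obstacle; everything after it is bookkeeping.

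With the engine in hand I would rewrite each order as ``$a^*a=a^*b$ plus a product identity''. Corollary~\ref{char core}(iii) gives $a\overset{\tiny{\textcircled{\#}}}\leq b\Leftrightarrow a^*a=a^*b$ and $ba=a^2$; the same characterization applied to the core-invertible element $b-a$ gives $b-a\overset{\tiny{\textcircled{\#}}}\leq b\Leftrightarrow (b-a)^*(b-a)=(b-a)^*b$ and $b(b-a)=(b-a)^2$. The first of these collapses through $(b-a)^*a=0\Leftrightarrow a^*b=a^*a$ to the same Hermitian equation $a^*a=a^*b$, and the second expands to $ab=a^2$. Hence (i) reads ``$a^*a=a^*b$ and $ba=a^2$'' and (ii) reads ``$a^*a=a^*b$ and $ab=a^2$'', so (i)$\Leftrightarrow$(ii) is immediate from the engine.

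It remains to splice in (iii). Note first that $a^2\in R^\#$ with $(a^2)^\#=(a^\#)^2$, so the sharp order $a^2\overset{\#}\leq ab$ is meaningful. For (i)$\Rightarrow$(iii): $a\overset{\tiny{\textcircled{\#}}}\leq b$ forces $a~*\leq b$ (so $a^*a=a^*b$ and $aR\subseteq bR$), and since $ab=a^2$ holds by the engine, the defining equations of $a^2\overset{\#}\leq ab$ degenerate to reflexivity. For (iii)$\Rightarrow$(i): pre-multiplying $(a^2)^\#a^2=(a^2)^\#ab$ by $a^2$ and using $a^2(a^2)^\#a^2=a^2$ and $a^2(a^2)^\#=aa^\#$ yields $ab=a^2$, hence $ba=a^2$ by the engine; paired with the Hermitian part of $a~*\leq b$ this is exactly (i) via Corollary~\ref{char core}(iii). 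This closes the cycle (i)$\Leftrightarrow$(ii)$\Leftrightarrow$(iii).
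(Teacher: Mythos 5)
Your proof is correct, but it does not follow the paper's route: the paper disposes of this corollary in one line, by setting $w=a$ in Theorem \ref{difference w-core} (the hypothesis $awb=bwa$ becoming $a^2b=ba^2$ and $wa\overset{\#}\leq wb$ becoming $a^2\overset{\#}\leq ab$), identifying $a\overset{\tiny{\textcircled{\#}}}\leq_a b$ with $a\overset{\tiny{\textcircled{\#}}}\leq b$ via Theorem \ref{three class partial orders}(i). You instead run everything through Corollary \ref{char core}(iii) applied to $a$ and to $b-a$, together with the group-inverse cancellation showing that $ba=a^2$, $ab=a^2$ and $a^2b=a^3$ are equivalent under $a^2b=ba^2$; all of these steps check out: $(b-a)^*(b-a)=(b-a)^*b$ does collapse to $a^*a=a^*b$, $b(b-a)=(b-a)^2$ does expand to $ab=a^2$, and $a^2(a^2)^\#=aa^\#$ makes your (iii)$\Rightarrow$(i) cancellation valid. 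Your detour buys something real. A literal application of Theorem \ref{difference w-core} with $w=a$ requires $b$ and $a-b$ to lie in $R_a^{\tiny{\textcircled{\#}}}$, i.e.\ to be $a$-core invertible (so $a\in R^{\parallel b}$, etc.), which is \emph{not} the same condition as the stated hypothesis $b,\,a-b\in R^{\tiny{\textcircled{\#}}}$; the identification of $R_a^{\tiny{\textcircled{\#}}}$ with $R^{\tiny{\textcircled{\#}}}$ holds for the element $a$ itself but not for other elements. Your argument uses only $a\in R^{\tiny{\textcircled{\#}}}$ and $b-a\in R^{\tiny{\textcircled{\#}}}$ (never $b\in R^{\tiny{\textcircled{\#}}}$, nor any $a$-core invertibility of $b$ or $b-a$), so it proves the corollary exactly as written, in fact under slightly weaker hypotheses, whereas the paper's one-line specialization glosses over this mismatch. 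The cost is length: what the paper gets for free from the already-proved Theorem \ref{difference w-core}, you re-derive by hand, essentially replaying that theorem's proof in the special case $w=a$, where the inverse-along-an-element bookkeeping reduces to group-inverse cancellation.
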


For any $a,b\in U(R)$, it is well known that $(ab)^{-1}=b^{-1}a^{-1}$, the formula above is well known as the reverse order law. Reverse order laws for the group inverse, the Moore-Penrose inverse and the core inverse do not hold in general. For the case of the the reverse order for the core inverse, a counterexample was constructed in \cite{Cohen2012} to show that $(ab)^{\tiny{\textcircled{\#}}}= b^{\tiny{\textcircled{\#}}}a^{\tiny{\textcircled{\#}}}$ does not hold. Later, Malik et al. \cite{Malik2014} showed the reverse order law for the core inverse of $AB$, under the core partial order $A\overset{\tiny{\textcircled{\#}}}\leq B$, where $A$ and $B$ are two $n \times n$ complex matrices.

A natural question is that whether the $w$-core inverse shares the reverse order law property under the $w$-core partial order, i.e., whether $(ab)_w^{\tiny{\textcircled{\#}}}= b_w^{\tiny{\textcircled{\#}}}a_w^{\tiny{\textcircled{\#}}}$ under the $w$-core partial order $a\overset{\tiny{\textcircled{\#}}}\leq_w b$. Example \ref{counterexample reverse} below illustrates that the hypothesis is not accurate in general.

\begin{example} \label{counterexample reverse} {\rm Let $R$ and the involution be the same as that of Example \ref{ex1}. Take
$a=b=\begin{bmatrix}
       1 & 1 \\
       0 & 0 \\
     \end{bmatrix}
$, $w=
\begin{bmatrix}
       1 & 0 \\
       1& 0 \\
     \end{bmatrix}\in R$, then $(ab)_w^{\tiny{\textcircled{\#}}}=a_w^{\tiny{\textcircled{\#}}}=b_w^{\tiny{\textcircled{\#}}}=
     \begin{bmatrix}
       \frac{1}{2} & 0 \\
       0& 0 \\
     \end{bmatrix}$ by Example \ref{ex1}. By a direct check, $a_w^{\tiny{\textcircled{\#}}}a=a_w^{\tiny{\textcircled{\#}}}b=
     \begin{bmatrix}
       \frac{1}{2} & \frac{1}{2} \\
       0& 0 \\
     \end{bmatrix}$ and $awa_w^{\tiny{\textcircled{\#}}}=bwa_w^{\tiny{\textcircled{\#}}}=\begin{bmatrix}
       1 & 0 \\
       0& 0 \\
     \end{bmatrix}$, i.e., $a\overset{\tiny{\textcircled{\#}}}\leq_w b$. However,
     $\begin{bmatrix}
       \frac{1}{2} & 0 \\
       0& 0 \\
     \end{bmatrix}=(ab)_w^{\tiny{\textcircled{\#}}} \neq b_w^{\tiny{\textcircled{\#}}} a_w^{\tiny{\textcircled{\#}}}=
     \begin{bmatrix}
       \frac{1}{2} & 0 \\
       0& 0 \\
     \end{bmatrix}\begin{bmatrix}
       \frac{1}{2} & 0 \\
       0& 0 \\
     \end{bmatrix}=
     \begin{bmatrix}
       \frac{1}{4} & 0 \\
       0& 0 \\
     \end{bmatrix}$.}
\end{example}

It is natural to ask what types of the product has the reverse order law property, under the assumption $a\overset{\tiny{\textcircled{\#}}}\leq_w b$. The following result illustrates that the $w$-core inverse of $awb$ has the reverse order law property.

\begin{theorem} Let $a,b,w\in R$ with $a,b\in R_w^{\tiny{\textcircled{\#}}}$. If $a\overset{\tiny{\textcircled{\#}}}\leq_w b$, then $awb\in R_w^{\tiny{\textcircled{\#}}}$ and $(awb)_w^{\tiny{\textcircled{\#}}}= b_w^{\tiny{\textcircled{\#}}}a_w^{\tiny{\textcircled{\#}}}$.
\end{theorem}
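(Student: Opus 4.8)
The plan is to show directly that $x:=b_w^{\tiny{\textcircled{\#}}}a_w^{\tiny{\textcircled{\#}}}$ satisfies the three defining equations of the $w$-core inverse of $c:=awb$, namely $cwx^2=x$, $xcwc=c$ and $(cwx)^*=cwx$. By the uniqueness of the $w$-core inverse, this single computation yields both assertions at once: that $awb$ is $w$-core invertible and that its $w$-core inverse equals $b_w^{\tiny{\textcircled{\#}}}a_w^{\tiny{\textcircled{\#}}}$. The device that makes the bookkeeping manageable is Lemma~\ref{w-core par Lemma}(iv), which under $a\overset{\tiny{\textcircled{\#}}}\leq_w b$ gives $x=b_w^{\tiny{\textcircled{\#}}}a_w^{\tiny{\textcircled{\#}}}=(a_w^{\tiny{\textcircled{\#}}})^2$. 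Replacing $x$ by $(a_w^{\tiny{\textcircled{\#}}})^2$ removes every occurrence of $b_w^{\tiny{\textcircled{\#}}}$, so that I may work solely with $a$, $w$, $a_w^{\tiny{\textcircled{\#}}}$ and $b$, invoking the standard identities $aw(a_w^{\tiny{\textcircled{\#}}})^2=a_w^{\tiny{\textcircled{\#}}}$, $a_w^{\tiny{\textcircled{\#}}}awa=a$, $awa_w^{\tiny{\textcircled{\#}}}a=a$, and the fact that $awa_w^{\tiny{\textcircled{\#}}}$ is a projection.

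The first move is to collapse $cwx$. By definition of $\overset{\tiny{\textcircled{\#}}}\leq_w$ one has $bwa_w^{\tiny{\textcircled{\#}}}=awa_w^{\tiny{\textcircled{\#}}}$, and grouping $cwx=aw\,(bwa_w^{\tiny{\textcircled{\#}}})\,a_w^{\tiny{\textcircled{\#}}}$ I expect, after reassociation and the identity $aw(a_w^{\tiny{\textcircled{\#}}})^2=a_w^{\tiny{\textcircled{\#}}}$, to reach
\[
cwx=aw\,(bwa_w^{\tiny{\textcircled{\#}}})\,a_w^{\tiny{\textcircled{\#}}}=aw\,(awa_w^{\tiny{\textcircled{\#}}})\,a_w^{\tiny{\textcircled{\#}}}=aw\,a_w^{\tiny{\textcircled{\#}}}.
\]
Since $awa_w^{\tiny{\textcircled{\#}}}$ is a projection, the Hermitian equation $(cwx)^*=cwx$ is then immediate, and the equation $cwx^2=(cwx)\,x=awa_w^{\tiny{\textcircled{\#}}}(a_w^{\tiny{\textcircled{\#}}})^2$ reduces to $x$ by one more use of the same identity. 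Thus two of the three equations cost almost nothing once $cwx=awa_w^{\tiny{\textcircled{\#}}}$ is in hand.

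The remaining equation $xcwc=c$ is where the real work lies, and I expect it to be the main obstacle, because here $b$ sits in the interior of the word $cwc=(awb)w(awb)$ and does not cancel on contact with $x$. The key is to first rewrite $cwc$ using the order relation: by Theorem~\ref{char w core} the hypothesis $a\overset{\tiny{\textcircled{\#}}}\leq_w b$ gives $bwa=awa$, so the central block $bwa$ in $cwc=awbwawb$ becomes $awa$, yielding $cwc=awawawb$. Now $b$ survives only at the far right, and I can peel off the leading factors from the left: applying $a_w^{\tiny{\textcircled{\#}}}awa=a$ twice collapses $xcwc=(a_w^{\tiny{\textcircled{\#}}})^2\,awawawb$ successively to $a_w^{\tiny{\textcircled{\#}}}awawb$ and then to $awb=c$.

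Putting these three verifications together establishes the defining equations for $x$, and the theorem follows by uniqueness of the $w$-core inverse. The only subtlety to watch is the correct bracketing in each reassociation and the consistent use of the two hypothesis-derived identities $bwa_w^{\tiny{\textcircled{\#}}}=awa_w^{\tiny{\textcircled{\#}}}$ (for $cwx$) and $bwa=awa$ (for $xcwc$); no genuinely new idea beyond these is needed once the reduction $b_w^{\tiny{\textcircled{\#}}}a_w^{\tiny{\textcircled{\#}}}=(a_w^{\tiny{\textcircled{\#}}})^2$ is made.
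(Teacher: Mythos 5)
Your proposal is correct and takes essentially the same approach as the paper's own proof: both use Lemma \ref{w-core par Lemma}(iv) to reduce $b_w^{\tiny{\textcircled{\#}}}a_w^{\tiny{\textcircled{\#}}}$ to $(a_w^{\tiny{\textcircled{\#}}})^2$ and then verify the three defining equations of the $w$-core inverse of $awb$ via the identities $bwa_w^{\tiny{\textcircled{\#}}}=awa_w^{\tiny{\textcircled{\#}}}$, $bwa=awa$ (Theorem \ref{char w core}), $a_w^{\tiny{\textcircled{\#}}}awa=a$ and $aw(a_w^{\tiny{\textcircled{\#}}})^2=a_w^{\tiny{\textcircled{\#}}}$. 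The only difference is cosmetic ordering: you compute $cwx=awa_w^{\tiny{\textcircled{\#}}}$ first and reuse it, while the paper verifies $xcwc=c$ first.
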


\begin{proof}
It follows from Lemma \ref{w-core par Lemma} that $b_w^{\tiny{\textcircled{\#}}}a_w^{\tiny{\textcircled{\#}}}=(a_w^{\tiny{\textcircled{\#}}})^2$. We next show that $x=(a_w^{\tiny{\textcircled{\#}}})^2$ is the $w$-core inverse of $awb$ by the following three steps.

(1) Note that $bwa=awa$ and $a_w^{\tiny{\textcircled{\#}}}awa=a$. Then $x(awb)w(awb)=(a_w^{\tiny{\textcircled{\#}}})^2aw(bwa)wb=(a_w^{\tiny{\textcircled{\#}}})^2aw(awa)wb
=a_w^{\tiny{\textcircled{\#}}}(a_w^{\tiny{\textcircled{\#}}}awa)wawb=(a_w^{\tiny{\textcircled{\#}}}awa)wb=awb$.

(2) $(awb)wx=awbw(a_w^{\tiny{\textcircled{\#}}})^2=awaw(a_w^{\tiny{\textcircled{\#}}})^2=awa_w^{\tiny{\textcircled{\#}}}=((awb)wx)^*$.

(3) $(awb)wx^2=(awbwx)x=awa_w^{\tiny{\textcircled{\#}}}(a_w^{\tiny{\textcircled{\#}}})^2=aw(a_w^{\tiny{\textcircled{\#}}})^2 a_w^{\tiny{\textcircled{\#}}}=x$.
\hfill$\Box$
\end{proof}

\bigskip
\centerline {\bf ACKNOWLEDGMENTS}
\vskip 2mm

The authors are highly grateful to the referees for their valuable comments and suggestions
which greatly improved this paper. This research is supported by the National Natural Science Foundation of China (No. 11801124) and China Postdoctoral Science Foundation (No. 2020M671068).

\begin{flushleft}
{\bf References}
\end{flushleft}

\end{document}